\documentclass[11pt]{article}
\usepackage{geometry} 
\geometry{
 letterpaper,
 total={175mm,230mm},
 left=20mm,
 top=20mm,
 }



\usepackage[utf8]{inputenc} 
\usepackage[T1]{fontenc}    
\usepackage{hyperref}       
\usepackage{url}            
\usepackage{booktabs}       
\usepackage{amsfonts}       
\usepackage{nicefrac}       
\usepackage{microtype}      
\usepackage{algorithm,algorithmic}
\usepackage{datetime}
\usepackage{amsthm}
\usepackage{color}
\usepackage{mathrsfs}
\usepackage{amsmath}
\usepackage{amssymb}
\usepackage{graphicx}
\usepackage{makecell}
\usepackage{bm}
\usepackage{subfigure}
\usepackage{natbib}
\usepackage{mathtools}

\usepackage[dvipsnames]{xcolor}
\usepackage{pifont}
\newcommand{\cmark}{{\color{PineGreen}\ding{51}}}%
\newcommand{\xmark}{{\color{BrickRed}\ding{55}}}%
\newcommand{\yesmark}{\textrm{\color{PineGreen} YES}}%
\newcommand{\nomark}{\textrm{\color{BrickRed} NO}}%

\newcommand{\makecellnew}[1]{{\renewcommand{\arraystretch}{1.2}\begin{tabular}{c} #1 \end{tabular}}}
\usepackage{colortbl}
\definecolor{bgcolor}{rgb}{0.65, 0.85, 0.99}

\newcommand{\eqdef}{\coloneqq}
\newcommand{\cO}{{\cal O}}
\newcommand{\R}{\mathbb{R}}
\newcommand{\E}{\mathbb{E}}

\def\Pr{{\rm Prob}}

\def\erf{{\rm erf}}

\DeclareMathOperator*{\argmin}{arg\,min}
\DeclareMathOperator*{\sign}{{\rm sign}}
\DeclareMathOperator*{\var}{{\rm Var}}

\def\<{\left\langle}
\def\>{\right\rangle}
\def\[{\left[}
\def\]{\right]}
\def\({\left(}
\def\){\right)}

\theoremstyle{plain}
\newtheorem{theorem}{Theorem}  
\newtheorem{lemma}[theorem]{Lemma} 
\newtheorem{remark}{Remark} 

\theoremstyle{definition}
\newtheorem{assumption}{Assumption} 
\newtheorem{definition}{Definition}

\theoremstyle{remark}

\graphicspath{{/}}

\begin{document}

\title{\bf Stochastic Sign Descent Methods: \\ New Algorithms and Better Theory}

\author{Mher Safaryan$^1$ \qquad Peter Richt\'arik$^{1,2}$ \\
\phantom{XXX} \\
 $^1$King Abdullah University of Science and Technology (KAUST), Thuwal, Saudi Arabia \\
 $^2$Moscow Institute of Physics and Technology (MIPT), Moscow, Russia
}

\date{}
\maketitle

\begin{abstract}
Various gradient compression schemes have been proposed to mitigate the communication cost in distributed training of large scale machine learning models. Sign-based methods, such as signSGD \citep{BWAA}, have recently been gaining popularity because of their simple compression rule and connection to adaptive gradient methods, like ADAM.
In this paper, we analyze sign-based methods for non-convex optimization in two key settings:
(i) the standard single-node setting, and
(ii) the parallel shared-data or distributed homogeneous setting.
For single machine case, we generalize the previous analysis of signSGD relying on intuitive bounds on success probabilities and allowing even biased estimators.
Furthermore, we extend the analysis to parallel setting within a parameter server framework, where exponentially fast noise reduction is guaranteed with respect to number of nodes, maintaining $1$-bit compression in both directions and using small mini-batch sizes.
Next, we establish a fundamental limitation of sign-based algorithms under heterogeneous local objectives. In the homogeneous setting, we introduce a new sign-based method, {\em Stochastic Sign Descent with Momentum (SSDM)}, and show that, under the standard bounded-variance assumption, it achieves the optimal asymptotic convergence rate in the standard Euclidean norm of the gradient
\footnote{\textcolor{red}{Post-publication correction: The original statement of Theorem~\ref{thm:ssdm} allowed fully heterogeneous local objectives, while its proof implicitly required every worker's stochastic gradient to be unbiased for the same global objective. The corrected result here is therefore restricted to the shared-data or homogeneous setting.}}.
We validate several aspects of our theoretical findings with numerical experiments.
\end{abstract}

\section{Introduction}

One of the key factors behind the success of modern machine learning models is the availability of large amounts of training data \citep{BoLe, KSH, Sch}. However, the state-of-the-art deep learning models deployed in industry typically rely on datasets too large to fit the memory of a single computer, and hence the training data is typically split and stored across a number of compute nodes capable of working in parallel.   Training such models then amounts to solving optimization problems of the form
\begin{equation} \label{eq:finite_sum}
\min_{x\in \R^d} f(x) \eqdef \frac{1}{M} \sum \limits_{n=1}^M f_n(x),
\end{equation}
where $f_n:\R^d\to \R$ represents the {\em non-convex} loss of a deep learning model parameterized by $x\in \R^d$ associated with data stored on node $n$. Arguably, stochastic gradient descent (SGD)~\citep{RoMo, VBS, QRGSLS} in of its many variants \citep{KiBa, DHS, SRB, Zei, GhLa} is the most popular algorithm for solving \eqref{eq:finite_sum}. In its basic implementation, all  workers $n\in \{1,2,\dots,M\}$ in parallel compute a random approximation $\hat{g}^n(x_k)$ of $\nabla f_n(x_k)$, known as the {\em stochastic gradient}. These approximations are then sent to a master node which performs the aggregation 
$
\hat{g}(x_k) \eqdef \frac{1}{M}\sum_{n=1}^M \hat{g}^n(x_k).$ The aggregated vector is subsequently broadcast back to the nodes, each of which  performs an update of the form
\begin{equation*}
x_{k+1} = x_k - \gamma_k \hat{g}(x_k),
\end{equation*}
updating their local copies of the parameters of the model. 


{ 
    
    \begin{table*}[t]
        \centering
        \scriptsize
        \addtolength{\tabcolsep}{-3pt}
        \caption{Summary of main theoretical results obtained in this work.}
        \label{tbl:summary}
        \vskip 0.15in
        \renewcommand{\arraystretch}{1.7}
        \begin{tabular}{|c|c|c|c|c|c|c|c|}
        \hline
        \makecell{}
            & \makecell{Convergence \\ rate}
            & \makecell{Gradient \\ norm used \\ in theory}
            & \makecell{Weak noise \\ assumptions}
            & \makecell{Weak \\ dependence \\ on smoothness}
            & \makecell{Can handle \\ biased \\ estimator?}
            & \makecell{Can work \\ with small \\ minibatch?}
            & \makecell{Can handle \\ partitioned \\ train data?} \\
        \hline
            \hline
            \makecell{SGD \\ \citep{GhLa}}
            & \makecell{$\cO\left(\frac{1}{\sqrt{K}}\right)$} 
            & \makecell{$l^2$ norm \\ squared} 
            & \makecell{$\textrm{Var}[\hat{g}]\le\sigma^2$} 
            & \makecell{\xmark \; $\max\limits_{i=1}^d L_i$} 
            & \makecell{\nomark} 
            & \makecell{\yesmark} 
            & \makecell{\yesmark} 
            \\
            \hline
            \makecell{signSGD \\ \citep{BZAA}}
            & \makecell{$\cO\left(\frac{1}{\sqrt{K}}\right)$} 
            & \makecell{a mix of \\ $l^1$ and $l^2$ \\ squared} 
            & \makecell{\xmark  \\ unimodal, \\ symmetric \& \\ $\textrm{Var}[\hat{g}_i]\le\sigma_i^2$} 
            & \makecell{\cmark \; $\frac{1}{d}\sum \limits_{i=1}^d L_i$} 
            & \makecell{\nomark} 
            & \makecell{\yesmark} 
            & \makecell{\nomark} 
            \\
            \hline
            \makecell{signSGD \\ with $M$ Maj.Vote \\ \citep{BZAA}}
            & \makecell{$\cO\left(\frac{1}{K^{\nicefrac{1}{4}}}\right)$ \\[12pt] (\text{\small speedup}$\sim\tfrac{1}{\sqrt{M}}$)} 
            & \makecell{$l^1$ norm} 
            & \makecell{\xmark  \\ unimodal, \\ symmetric \& \\ $\textrm{Var}[\hat{g}_i]\le\sigma_i^2$} 
            & \makecell{\cmark \; $\frac{1}{d}\sum \limits_{i=1}^d L_i$} 
            & \makecell{\nomark} 
            & \makecell{\nomark} 
            & \makecell{\nomark} 
            \\
            \hline
            \makecell{Signum \\ \citep{BWAA}}
            & \makecell{$\cO\left(\frac{\log K}{K^{\nicefrac{1}{4}}}\right)$} 
            & \makecell{$l^1$ norm} 
            & \makecell{\xmark  \\ unimodal, \\ symmetric \& \\ $\textrm{Var}[\hat{g}_i]\le\sigma_i^2$} 
            & \makecell{\cmark \; $\frac{1}{d}\sum \limits_{i=1}^d L_i$} 
            & \makecell{\nomark} 
            & \makecell{\nomark} 
            & \makecell{\nomark} 
            \\
            \hline
            \makecell{Noisy signSGD \\ \citep{noisySignSGD}}
            & \makecell{$\cO\left(\frac{1}{K^{\nicefrac{1}{4}}}\right)$} 
            & \makecell{a mix of \\ $l^1$ and $l^2$ \\ squared} 
            & \makecell{\xmark  \\ absence of noise \\ $\hat{g} = \nabla f$} 
            & \makecell{\xmark \; $\max\limits_{n=1}^M L^n$} 
            & \makecell{\nomark} 
            & \makecell{\nomark} 
            & \makecell{\yesmark} 
            \\
            \hline
            \hline
            \rowcolor{bgcolor}
            \makecellnew{signSGD \\ {\bf This work} (Thm.~\ref{non-convex-theorem}, \ref{non-convex-theorem-2})}
            & \makecellnew{$\cO\left(\frac{1}{\sqrt{K}}\right)$} 
            & \makecellnew{$\rho$-norm} 
            & \makecellnew{\cmark  \\ $\rho_i > \frac{1}{2}$ } 
            & \makecellnew{\cmark \; $\frac{1}{d}\sum \limits_{i=1}^d L_i$} 
            & \makecellnew{\yesmark} 
            & \makecellnew{\yesmark} 
            & \makecellnew{\nomark} 
            \\
            \hline
            \rowcolor{bgcolor}
            \makecellnew{signSGD \\ with $M$ Maj.Vote \\ {\bf This work} (Thm.~\ref{non-convex-theorem-d})}
            & \makecellnew{$\cO\left(\frac{1}{\sqrt{K}}\right)$\\[5pt] (\text{\small speedup}$\sim e^{-M}$)} 
            & \makecellnew{$\rho_M$-norm} 
            & \makecellnew{\cmark  \\ $\rho_i > \frac{1}{2}$ } 
            & \makecellnew{\cmark \; $\frac{1}{d}\sum \limits_{i=1}^d L_i$} 
            & \makecellnew{\yesmark} 
            & \makecellnew{\yesmark} 
            & \makecellnew{\nomark} 
            \\
            \hline
            \rowcolor{bgcolor}
            \makecellnew{SSDM {\sc (Alg.~\ref{alg:ssdm})} \\ {\bf This work} (Thm.~\ref{thm:ssdm})}
            & \makecellnew{$\cO\left(\frac{1}{K^{\nicefrac{1}{4}}}\right)$} 
            & \makecellnew{$l^2$ norm} 
            & \makecellnew{$\textrm{Var}[\hat{g}]\le\sigma^2$} 
            & \makecellnew{\cmark \; $\frac{1}{M}\sum \limits_{n=1}^M L^n$} 
            & \makecellnew{\nomark} 
            & \makecellnew{\yesmark} 
            & \makecellnew{\nomark} 
            \\
        \hline
        \end{tabular}   
        \vskip -0.1in    
    \end{table*}
}

\subsection{Gradient compression}  
Typically, communication of the local gradient estimators $\hat{g}^n(x_k)$ to the master forms the bottleneck of such a system \citep{SFDLY, ZLKALZ, LHMWD}. In an attempt to alleviate this communication bottleneck, a number of compression schemes for gradient updates  have been proposed and analyzed \citep{AGLTV, WSLCPW, WXYWWCL, KFJ, MGTR}. A {\em compression scheme} is a (possibly randomized)  mapping $Q:\R^d\to \R^d$,  applied by the nodes to $\hat{g}^n(x_k)$ (and possibly also by the master to aggregated update in situations when broadcasting is expensive as well) in order to reduce the number of bits of the communicated message. 

\textbf{Sign-based compression.}  Although most of the existing theory is limited to {\em unbiased} compression schemes, i.e., $\E Q(x) = x$,  {\em biased} schemes such as those based on communicating signs of the update entries only often perform much better~\citep{SFDLY, Strom, WXYWWCL, CCC, BaHe, BWAA, BZAA, ZRSKK, LCCH}.  The simplest  among these sign-based  methods is signSGD (see Algorithm~\ref{alg:signSGD}), whose update direction is assembled from the component-wise signs of the stochastic gradient. 

\textbf{Adaptive methods.} While ADAM is one of the most  popular {\em adaptive} optimization methods used in deep learning \citep{KiBa}, there are issues with its convergence \citep{RKK} and generalization \citep{WRSSR} properties. It was noted by \citet{BaHe} that the behaviour of ADAM is similar to a momentum version of signSGD. Connection between sign-based and adaptive methods has long history, originating at least in Rprop~\citep{RiBr} and RMSprop~\citep{TiHi}. Therefore, investigating the behavior of signSGD can improve our understanding on the convergence of adaptive methods such as ADAM. 


\section{Contributions}

We now present the main contributions of this work. Our key results are summarized in Table \ref{tbl:summary}.

\subsection{Single machine setup}

$\bullet$ {\bf 2 methods for 1-node setup.}   In   the $M=1$ case, we study two general classes of sign based methods for minimizing a smooth non-convex function $f$. The first method has the standard form\footnotemark
\begin{equation}\label{eq:method1}x_{k+1} = x_k - \gamma_k\sign\hat{g}(x_k),\end{equation}
while the second has a new form not considered in the literature before:
\begin{equation}\label{eq:method2}x_{k+1} = \argmin\{f(x_k), f(x_k - \gamma_k\sign\hat{g}(x_k))\}.\end{equation}

\footnotetext{$\sign g$ is applied element-wise to the entries $g_1,g_2,\dots,g_d$ of $g\in \R^d$. For $t\in \R$ we define $\sign t = 1$ if $t>0$, $\sign t = 0$ if $t=0$,  and $\sign t = -1$ if $t<0$.}

\noindent $\bullet$ {\bf Key novelty.}  The key novelty of our methods is in a {\em substantial relaxation} of the requirements that need to be imposed on the gradient estimator $\hat{g}(x_k)$ of the true gradient $\nabla f(x^k)$. In sharp contrast with existing approaches, we allow $\hat{g}(x_k)$ to be {\em biased}. Remarkably, we only need one additional and rather weak assumption on $\hat{g}(x_k)$ for the methods to provably converge: we  require the signs of the entries of $\hat{g}(x_k)$ to be equal to the signs  of the entries of $g(x^k) \eqdef \nabla f(x^k)$ with a probability strictly larger than $\nicefrac{1}{2}$ (see Assumption~\ref{asum-SPB}). Formally, we assume the following bounds on success probabilities:
\begin{equation*}\tag{SPB}
\Pr(\sign\hat{g}_i(x_k) = \sign g_i(x_k) ) > \frac{1}{2}
\end{equation*}
for all $i \in \{1,2,\dots,d\}$ with $g_i(x_k) \ne 0$.

We provide three necessary conditions for our assumption to hold (see Lemma \ref{lemma:main-rho}, \ref{lemma:strict-rho} and \ref{lemma:SPB-via-minibatch}) and show through a counterexample that a slight violation of this assumption breaks the convergence.

\noindent $\bullet$ {\bf Convergence theory.} While our complexity bounds have the same $\cO(\nicefrac{1}{\sqrt{K}})$ dependence on the number of iterations, they have a {\em better dependence on the smoothness parameters} associated with $f$. Theorem~\ref{non-convex-theorem} is the  first result on signSGD for non-convex functions which does not rely on mini-batching, and which allows for step sizes independent of the total number of iterations $K$. Finally, Theorem 1 in \citep{BZAA} can be recovered from our general Theorem~\ref{non-convex-theorem}. Our bounds are cast in terms of a {\em novel norm-like function, which we call the $\rho$-norm}, which  is a weighted $l^1$ norm with positive  variable weights. 

\noindent $\bullet$ {\bf Multi-node case with noise reduction at exponential speed.} Under the same SPB assumption, we extend our results to the {\em parallel setting} with arbitrary $M$ nodes, where we also consider sign-based compression of the aggregated gradients. Considering the noise-free training as a baseline, we guarantee exponentially fast noise reduction with respect to $M$ (see Theorem \ref{non-convex-theorem-d}).

\subsection{Distributed setup}

\noindent $\bullet$ {\bf Limitation under heterogeneous local objectives.} We describe a fundamental scaling obstacle that prevents sign-based aggregation from guaranteeing convergence for arbitrary heterogeneous settings.

\noindent $\bullet$ {\bf New sign-based method for homogeneous case.} To improve the convergence theory in the homogeneous setting, we propose a new distributed sign-based method--{\em Stochastic Sign Descent with Momentum (SSDM)}; see Algorithm \ref{alg:ssdm}.

\noindent $\bullet$ {\bf Key novelty.} The key novelty in our SSDM method is the notion of {\em stochastic sign} operator $\widetilde{\sign}:\R^d\to\R^d$ defined as follows:
\begin{equation*}
\left(\widetilde{\sign}\,g\right)_i =
\begin{cases}
+1, & \text{ with probability } \frac{1}{2} + \frac{1}{2}\frac{g_i}{\|g\|} \\
-1, & \text{ with probability } \frac{1}{2} - \frac{1}{2}\frac{g_i}{\|g\|}
\end{cases}
\end{equation*}
for all $i \in \{1,2,\dots,d\}$ and $\widetilde{\sign}\,\mathbf{0} = \mathbf{0}$ with probability $1$.
Unlike the deterministic sign operator, stochastic $\widetilde{\sign}$ naturally satisfies the SPB assumption and it gives an unbiased estimator with a proper scaling factor.

\noindent $\bullet$ {\bf Convergence theory.}
Under the standard bounded variance condition, our SSDM method guarantees the optimal asymptotic rate $\cO(\varepsilon^{-4})$ without {\em error feedback} trick and communicating sign-bits only (see Theorem \ref{thm:ssdm}).

\section{Success Probabilities and Gradient Noise} \label{sec:success_probs} 

In this section we describe our key (and weak) assumption on the gradient estimator $\hat{g}(x)$, and show through a counterexample that without this assumption, signSGD can fail to converge.

\subsection{Success probability bounds}

\begin{assumption}[SPB: Success Probability Bounds]\label{asum-SPB}
For any  $x\in\R^d$, we have access to an independent (and {\em not necessarily unbiased}) estimator $\hat{g}(x)$ of the true gradient $g(x)\eqdef \nabla f(x)$ that if $g_i(x)\ne0$, then
\begin{equation}\label{SP-bounds}
\rho_i(x) \eqdef \Pr \left(\sign\hat{g}_i(x) = \sign g_i(x) \right) > \frac{1}{2}
\end{equation}
for all $x\in \R^d$ and all $i \in \{1,2,\dots,d\}$.
\end{assumption}

We will refer to the probabilities $\rho_i$ as {\em success probabilities}. As we will see, they play a central role in the convergence of sign based methods.
Moreover, we argue that it is  reasonable to require from the sign of stochastic gradient to show true gradient direction more likely than the opposite one. Extreme cases of this assumption are the absence of gradient noise, in which case $\rho_i=1$, and an overly noisy stochastic gradient, in which case $\rho_i\approx\frac{1}{2}$.

\begin{remark}
Assumption \ref{asum-SPB} can be relaxed by replacing bounds (\ref{SP-bounds}) with
\begin{equation*}
\E\left[\sign\left(\hat{g}_i(x)\cdot g_i(x)\right)\right] > 0, \quad\text{if}\quad g_i(x)\ne0.
\end{equation*}
However, if $\sign\hat{g}_i(x) \ne 0$ almost surely (e.g. $\hat{g}_i(x)$ has continuous distribution), then these bounds are identical.
\end{remark}
\textbf{Extension to stochastic sign oracle.} Notice that we do \emph{not} require $\hat{g}$ to be unbiased and we do \emph{not} assume uniform boundedness of the variance, or of the second moment. This observation allows to extend existing theory to more general sign-based methods with a stochastic sign oracle. By a stochastic sign oracle we mean an oracle that takes $x_k\in \R^d$ as an input, and outputs a random vector $\hat{s}_k\in \R^d$ with entries in $\pm 1$. However, for the sake of simplicity, in the rest of the paper we will work with the signSGD formulation, i.e., we let $\hat{s}_k = \sign\hat{g}(x_k)$.

\subsection{A counterexample to signSGD} \label{sec:counterexample}
Here we analyze a counterexample to signSGD discussed in \citep{SQSM}. Consider the following least-squares problem with unique minimizer $x^*=(0,0)$:

\begin{equation}\label{counter-ex-1node}
\min_{x\in\R^2} f(x) = \frac{1}{2}\left[\langle a_1, x\rangle^2 + \langle a_2, x\rangle^2\right], \qquad
a_1 = \left[\begin{smallmatrix} 1+\varepsilon \\ -1+\varepsilon\end{smallmatrix}\right], \;
a_2 = \left[\begin{smallmatrix} -1+\varepsilon \\ 1+\varepsilon\end{smallmatrix}\right],
\end{equation}
where $\varepsilon\in(0,1)$ and stochastic gradient $\hat{g}(x) = \nabla \langle a_i, x\rangle^2 = 2\langle a_i, x\rangle a_i$ with probabilities $\nicefrac{1}{2}$ for $i=1,2$. Let us take any point from the line $Z=\{(z_1,z_2)\colon z_1+z_2=2\}$ as initial point $x_0$ for the algorithm and notice that $\sign\hat{g}(x) = \pm(1,-1)$ for any $x\in Z$. Hence, signSGD with any step-size sequence remains stuck along the line $Z$, whereas the problem has a unique minimizer at the origin.

In this example, Assumption~\ref{asum-SPB} is violated. Indeed, notice that
$
\sign\hat{g}(x) = (-1)^i\sign\langle a_i, x\rangle 
\left[\begin{smallmatrix} -1 \\ 1\end{smallmatrix}\right]
$
with probabilities $\nicefrac{1}{2}$ for $i = 1, 2$. By $S \eqdef \{x\in\R^2\colon \langle a_1, x\rangle \cdot \langle a_2, x\rangle > 0\}\neq\emptyset$ denote the open cone of points having either an acute or an obtuse angle with both $a_i$'s. Then for any $x\in S$, the sign of the stochastic gradient is $\pm(1,-1)$ with probabilities $\nicefrac{1}{2}$. Hence for any $x\in S$, we have low success probabilities:
$$
\rho_i(x) = \Pr\left(\sign\hat{g}_i(x) = \sign g_i(x) \right) \le \frac{1}{2}, \; i = 1, 2.
$$
So, in this case we have an entire conic region with low success probabilities, which clearly violates (\ref{SP-bounds}). Furthermore, if we take a point from the complement open cone $\bar{S}^c$, then the sign of stochastic gradient equals to the sign of gradient, which is perpendicular to the axis of $S$ (thus in the next step of the iteration we get closer to $S$). For example, if $\langle a_1, x\rangle < 0$ and $\langle a_2, x\rangle > 0$, then $\sign\hat{g}(x) = (1,-1)$ with probability 1, in which case $x-\gamma\sign\hat{g}(x)$ gets closer to low success probability region $S$.

In summary, in this counterexample there is a conic region where the sign of the stochastic gradient is useless (or behaves adversarially), and for any point outside that region, moving direction (which is the opposite of the sign of gradient) leads toward that conic region.

\subsection{Sufficient conditions for SPB}
To motivate our SPB assumption, we compare it with 4 different conditions commonly used in the literature and show that it holds under general assumptions on gradient noise. Below, we assume that for any point $x\in\R^d$, we have access to an independent and unbiased estimator $\hat{g}(x)$ of the true gradient $g(x)=\nabla f(x)$.
%
\begin{lemma}[see \ref{apx:lemma-main-rho}]\label{lemma:main-rho}
If for each coordinate $\hat{g}_i$ has a unimodal and symmetric distribution with variance $\sigma_i^2=\sigma_i^2(x)$, $1\le i\le d$ and $g_i\ne0$, then
$$
\rho_i \ge \frac{1}{2} + \frac{1}{2}\frac{|g_i|}{|g_i| + \sqrt{3}\sigma_i} > \frac{1}{2}.
$$
\end{lemma}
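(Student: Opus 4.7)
The plan is to reduce the claim to Gauss's inequality for unimodal distributions. Fix a coordinate $i$ and, without loss of generality, suppose $g_i>0$ (the case $g_i<0$ is entirely symmetric by flipping signs). Since $\hat g_i$ is unbiased, its symmetric unimodal distribution is symmetric about its mean $g_i$, so the centered variable $Y \eqdef \hat g_i - g_i$ is symmetric unimodal about $0$, with mode at $0$ and second moment $\E[Y^2]=\sigma_i^2$. This lets me rewrite the success probability as
\[
\rho_i \;=\; \Pr(\hat g_i>0) \;=\; 1-\Pr(Y\le -g_i) \;=\; 1-\tfrac12\,\Pr(|Y|\ge g_i),
\]
where the last equality uses symmetry of $Y$ about $0$ (boundary ties being a measure-zero issue that is standard to handle).

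Next I invoke Gauss's inequality for a unimodal random variable with mode $0$ and second moment $\sigma_i^2$, which bounds tail probabilities in two regimes:
\[
\Pr(|Y|\ge r)\;\le\;
\begin{cases}\dfrac{4\sigma_i^2}{9r^2}, & r\ge \tfrac{2\sigma_i}{\sqrt 3},\\[4pt] 1-\dfrac{r}{\sqrt 3\,\sigma_i}, & 0\le r\le \tfrac{2\sigma_i}{\sqrt 3}. \end{cases}
\]
Since the target lower bound can be rewritten as $\rho_i\ge 1-\tfrac{\sqrt 3\,\sigma_i}{2(g_i+\sqrt 3\,\sigma_i)}$, the problem reduces to showing $\Pr(|Y|\ge g_i)\le \tfrac{\sqrt 3\,\sigma_i}{g_i+\sqrt 3\,\sigma_i}$ in each regime with $r=g_i$.

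In the small-gradient regime $g_i\le 2\sigma_i/\sqrt 3$, cross-multiplying reduces the required inequality to $g_i^2\ge 0$, so it holds automatically. In the large-gradient regime $g_i\ge 2\sigma_i/\sqrt 3$, the required inequality reduces to the polynomial inequality $4\sigma_i g_i + 4\sqrt 3\,\sigma_i^2 \le 9\sqrt 3\,g_i^2$, which is tight at $g_i=2\sigma_i/\sqrt 3$ and continues to hold for all larger $g_i$ because the right-hand side grows quadratically in $g_i$ while the left-hand side grows only linearly. The strict inequality $\rho_i>\tfrac12$ is then immediate from $g_i\neq 0$ and $\sigma_i<\infty$.

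The main obstacle is conceptual rather than computational: a single Chebyshev-type bound is vacuous when $g_i\lesssim \sigma_i$ and cannot yield the claimed $\tfrac12+\tfrac12\tfrac{|g_i|}{|g_i|+\sqrt 3\,\sigma_i}$ form, so the ``tent'' piece of Gauss's inequality near the mode is essential. The algebra of dovetailing both pieces of the piecewise Gauss bound into the single clean right-hand side requires the split at the boundary $g_i=2\sigma_i/\sqrt 3$, where the two bounds agree (both equal $\tfrac13$), ensuring the combined inequality is continuous and sharp there.
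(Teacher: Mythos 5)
Your proof is correct and follows essentially the same route as the paper's: reduce $\rho_i$ via symmetry to $\tfrac12+\tfrac12\Pr(|\hat g_i-g_i|\le|g_i|)$ and then verify, regime by regime, that the two-piece Gauss bound implies the single clean bound $\tfrac{r/\sigma}{r/\sigma+\sqrt3}$ (the paper simply asserts this last step as ``direct algebraic manipulations,'' which you carry out explicitly). One small inaccuracy: the polynomial inequality $4\sigma_i g_i+4\sqrt3\,\sigma_i^2\le 9\sqrt3\,g_i^2$ is not tight at $g_i=2\sigma_i/\sqrt3$ (the two sides are $20\sigma_i^2/\sqrt3$ and $36\sigma_i^2/\sqrt3$); only the two Gauss pieces agree there, but since the inequality holds strictly at that point and the gap grows for larger $g_i$, your argument is unaffected.
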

This is the setup used in Theorem 1 of \citet{BZAA}. We recover their result as a special case using Lemma \ref{lemma:main-rho} (see Appendix~\ref{sec:recovering}).
Next, we replace the distribution condition by coordinate-wise strong growth condition (SGC) \citep{SR, VBS} and fixed mini-batch size.

\begin{lemma}[see \ref{apx:lemma-strict-rho}]\label{lemma:strict-rho}
Let coordinate-wise variances $\sigma_i^2(x) \le c_i\, g_i^2(x)$ are bounded for some constants $c_i\ge 0$. Choose  mini-batch size $\tau>2\max_i c_i$ for stochastic gradient estimator. If further $g_i\ne0$, then
$$
\rho_i \ge 1 - \frac{c_i}{\tau} > \frac{1}{2}.
$$
\end{lemma}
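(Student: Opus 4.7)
The strategy is the classical mini-batch + Chebyshev argument. Fix a coordinate $i$ and a point $x$ with $g_i(x)\neq 0$, and write WLOG $g_i(x) > 0$ (the opposite case is symmetric). Form the mini-batch estimator
\[
\bar{g}(x) \;=\; \frac{1}{\tau}\sum_{j=1}^{\tau}\hat{g}^{(j)}(x),
\]
where the $\hat{g}^{(j)}(x)$ are i.i.d.\ copies of the unbiased oracle. By independence and unbiasedness, $\E[\bar{g}_i(x)] = g_i(x)$ and $\var[\bar{g}_i(x)] = \sigma_i^2(x)/\tau \le c_i\, g_i^2(x)/\tau$. The success probability in the SPB assumption is then $\rho_i = \Pr(\sign\bar{g}_i(x) = \sign g_i(x)) = \Pr(\bar{g}_i(x) > 0)$ (after noting that $\bar{g}_i(x) = 0$ has probability zero under reasonable oracle assumptions, or is absorbed into the failure event without loss).

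The key step is Chebyshev's inequality applied to the deviation $|\bar{g}_i(x) - g_i(x)|$ with threshold $g_i(x)$:
\[
\Pr\bigl(|\bar{g}_i(x) - g_i(x)| \ge g_i(x)\bigr) \;\le\; \frac{\var[\bar{g}_i(x)]}{g_i(x)^2} \;\le\; \frac{c_i}{\tau}.
\]
The complementary event $\{|\bar{g}_i(x) - g_i(x)| < g_i(x)\}$ is precisely $\bar{g}_i(x) \in (0,\,2g_i(x))$, which is contained in $\{\sign\bar{g}_i(x) = +1\} = \{\sign\bar{g}_i(x) = \sign g_i(x)\}$. Hence
\[
\rho_i \;\ge\; 1 - \frac{c_i}{\tau}.
\]

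Finally, the choice $\tau > 2\max_i c_i$ gives $c_i/\tau < 1/2$, so $1 - c_i/\tau > 1/2$, which is exactly the strict SPB inequality required. There is really no hard step here: the only subtlety is turning a two-sided Chebyshev deviation bound into a one-sided sign-agreement statement by observing that a deviation of magnitude less than $|g_i(x)|$ cannot flip the sign. The role of the SGC-type bound $\sigma_i^2 \le c_i g_i^2$ is to make the Chebyshev ratio $\var/g_i^2$ a dimensionless quantity controllable purely through the batch size $\tau$, which is what allows the threshold $\tau > 2\max_i c_i$ to be stated independently of $x$.
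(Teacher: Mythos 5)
Your proposal is correct and follows essentially the same route as the paper: both reduce the failure event $\{\sign\hat{g}_i^{(\tau)}\neq\sign g_i\}$ to the deviation event $\{|\hat{g}_i^{(\tau)}-g_i|\ge|g_i|\}$ and bound it by $\sigma_i^2/(\tau g_i^2)\le c_i/\tau$ via Chebyshev/Markov on the mini-batch variance. The only cosmetic difference is that the paper phrases the sign-mismatch event as $\{|\hat{g}_i^{(\tau)}-g_i|=|\hat{g}_i^{(\tau)}|+|g_i|\}$ (which also absorbs the $\hat{g}_i^{(\tau)}=0$ case) before relaxing it to the deviation event, whereas you handle that boundary case by a remark.
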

Now we remove SGC and give an adaptive condition on mini-batch size of stochastic gradient for the SPB assumption to hold.
\begin{lemma}[see \ref{apx:lemma-SPB-via-minibatch}]\label{lemma:SPB-via-minibatch}
Let $\sigma_i^2=\sigma_i^2(x)$ be the variance and $\nu_i^3=\nu_i^3(x)$ be the 3th central moment of $\hat{g}_i(x)$, $1\le i\le d$. Then SPB assumption holds if mini-batch size
$$
\tau > 2\min \left( \frac{\sigma_i^2}{g_i^2}, \frac{\nu_i^3}{|g_i|\sigma_i^2}\right).
$$
\end{lemma}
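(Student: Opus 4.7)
The plan is to reduce the SPB inequality $\rho_i>1/2$ to a one-sided tail bound on the mini-batch average and then prove two independent sufficient conditions, one using only the variance, the other using the third central moment, so that $\tau$ exceeding twice the smaller of the two thresholds automatically implies SPB.

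Fix a coordinate $i$ and, by symmetry, assume WLOG $g_i(x)>0$. Writing the mini-batch estimator as $\hat g_i^{(\tau)}(x)=\frac{1}{\tau}\sum_{j=1}^{\tau}\hat g_i^{(j)}(x)$ where the $\hat g_i^{(j)}$ are independent copies, SPB is equivalent to $\Pr(\hat g_i^{(\tau)}(x)\le 0)<\tfrac12$. Independence makes the mini-batch unbiased with variance $\sigma_i^2/\tau$ and third central moment $\nu_i^3/\tau^2$ (the second cumulant scales linearly in $\tau$, the third cumulant does too, and one then divides by $\tau^2$ and $\tau^3$ respectively when averaging). These scalings are the root of the two different $\tau$-dependencies that appear in the statement.

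For the first branch, I apply Chebyshev/Markov to $(\hat g_i^{(\tau)}-g_i)^2$:
\[
\Pr\bigl(\hat g_i^{(\tau)}\le 0\bigr)\le\Pr\bigl(|\hat g_i^{(\tau)}-g_i|\ge g_i\bigr)\le\frac{\sigma_i^2}{\tau\, g_i^2}.
\]
This is strictly below $1/2$ exactly when $\tau>2\sigma_i^2/g_i^2$, which is the first alternative in the bound. For the second branch, a pure second-moment argument is too weak, so I would instead invoke a refined one-sided concentration inequality that also sees the third central moment. The cleanest route is a Berry--Esseen estimate: the CDF of $(\hat g_i^{(\tau)}-g_i)/(\sigma_i/\sqrt\tau)$ agrees with that of a standard normal up to error $\mathcal{O}(\nu_i^3/(\sigma_i^3\sqrt\tau))$, and combining this with the elementary lower bound $\Phi(g_i\sqrt\tau/\sigma_i)-\tfrac12\ge g_i\sqrt\tau/(\sigma_i\sqrt{2\pi})$ produces
\[
\Pr\bigl(\hat g_i^{(\tau)}\le 0\bigr)\le\frac{\nu_i^3}{\tau\, g_i\,\sigma_i^2}
\]
(an essentially equivalent derivation uses a polynomial Markov inequality of Cantelli type with a cubic test function in $\hat g_i^{(\tau)}-g_i$, where the scaling $\nu_i^3/\tau^2$ versus $\sigma_i^2/\tau$ of the two central moments produces exactly the displayed $1/\tau$ rate). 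Imposing this bound $<1/2$ yields $\tau>2\nu_i^3/(g_i\sigma_i^2)$, the second alternative. Replacing $g_i$ by $|g_i|$ simultaneously handles the $g_i<0$ case, and since each sufficient condition alone implies SPB, it is enough that $\tau$ exceeds twice the smaller of the two thresholds.

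The routine work is the Chebyshev step; the delicate part is getting the third-moment estimate clean enough to deliver the constant $2$ in the statement. The key technical check is that the Berry--Esseen absolute constant and the multiplicative factor $1/\sqrt{2\pi}$ combine to something strictly below $2$ for all $\tau$ in the relevant regime, and that when $g_i\sqrt\tau/\sigma_i$ is no longer small, the first (Chebyshev) bound already takes over, so the two branches cover each other's blind spots. This regime-matching between the two routes is what I expect to be the main obstacle, and it is precisely what is papered over by the \emph{minimum} that appears in the statement.
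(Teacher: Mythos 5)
Your overall architecture is the same as the paper's: a Chebyshev branch handling $\tau>2\sigma_i^2/g_i^2$, a Berry--Esseen branch handling the complementary regime, and the observation that the minimum in the statement works because each branch covers the other's blind spot. The Chebyshev branch is correct and essentially identical to the paper's Lemma on small randomness. You also correctly flag the regime-matching as the crux. However, the third-moment branch as written does not go through, for three concrete reasons. First, the ``elementary lower bound'' $\Phi(g_i\sqrt{\tau}/\sigma_i)-\tfrac12\ge g_i\sqrt{\tau}/(\sigma_i\sqrt{2\pi})$ is reversed: since the standard normal density is maximized at the origin, $\Phi(t)-\tfrac12\le t/\sqrt{2\pi}$ for $t\ge0$. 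A valid linear lower bound must use a smaller slope and holds only on a bounded interval; this is exactly where the restriction to the regime $\tau\le 2\sigma_i^2/g_i^2$ (equivalently $g_i\sqrt{\tau}/(\sqrt2\sigma_i)\le1$) is needed. The paper does this via concavity of $\erf$ on $[0,1]$, giving $\erf(x)\ge\erf(1)\,x$ there. Second, the clean tail bound $\Pr(\hat g_i^{(\tau)}\le0)\le\nu_i^3/(\tau g_i\sigma_i^2)$ is not what Berry--Esseen delivers; what you actually obtain is
\begin{equation*}
\Pr\bigl(\hat g_i^{(\tau)}\le0\bigr)\;\le\;\frac12-c\,\frac{g_i\sqrt{\tau}}{\sigma_i}+C\,\frac{\nu_i^3}{\sigma_i^3\sqrt{\tau}},
\end{equation*}
and one must compare the two $\tau$-dependent terms, yielding the condition $\tau>(C/c)\,\nu_i^3/(g_i\sigma_i^2)$; the remaining work is to verify that $C/c\le2$, which with the Berry--Esseen constant $C=\tfrac12$ reduces to $\erf(1)>1/\sqrt2$ (the paper proves this by an explicit convexity estimate on $e^{-t^2}$). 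Third, your claimed scaling of the third central moment of the mini-batch average as $\nu_i^3/\tau^2$ is valid for the third \emph{cumulant} but not for the absolute third central moment $\E|X-\mu|^3$ that $\nu^3$ denotes here (by Rosenthal-type bounds the absolute moment of the average also carries a $\sigma_i^3/\tau^{3/2}$ term), so the alternative ``Cantelli-type cubic Markov'' route would not produce the displayed $1/\tau$ rate either.

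In short: right skeleton, correct first branch, correct identification of the difficulty, but the second branch needs to be reworked along the lines above --- restrict to the regime where Chebyshev fails, use a linear lower bound on $\erf$ valid on $[0,1]$, and explicitly bound the resulting constant by $2$.
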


Finally, we compare SPB with the standard bounded variance assumption in the sense of differential entropy.

\begin{lemma}[see \ref{apx:lemma-SPB-via-entropy}]\label{lemma:SPB-via-entropy}
Differential entropy of a probability distribution under the bounded variance assumption is bounded, while under the SPB assumption it could be arbitrarily large.
\end{lemma}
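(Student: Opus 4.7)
The plan is to prove the two halves separately, since they are essentially independent. The ``bounded variance implies bounded entropy'' direction is a classical information-theoretic fact, while the ``SPB allows unbounded entropy'' direction requires producing a concrete family of distributions.

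For the first half, I would invoke the standard maximum-entropy characterization of the Gaussian: among all probability densities on $\R$ with a prescribed variance $\sigma^2$, the Gaussian $\mathcal{N}(\mu,\sigma^2)$ attains the maximal differential entropy, namely $\tfrac{1}{2}\log(2\pi e \sigma^2)$. Hence for any one-dimensional estimator $\hat{g}_i$ satisfying $\var[\hat{g}_i]\le\sigma_i^2$, one has $h(\hat{g}_i)\le \tfrac{1}{2}\log(2\pi e \sigma_i^2)$, which is finite. The multi-dimensional case follows from subadditivity of differential entropy together with per-coordinate variance bounds. This is a textbook step; I would simply cite it and state the bound.

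For the second half, the task is to exhibit a family of distributions that satisfies the SPB assumption (say, with $g_i>0$ fixed) but whose differential entropy is unbounded. A clean choice is the uniform distribution on the asymmetric interval $[-a,\,a+2g_i]$, parameterized by $a>0$. Its mean is exactly $g_i$, so it is even unbiased; its success probability is
\[
\Pr(\hat{g}_i>0) \;=\; \frac{a+2g_i}{2a+2g_i} \;=\; \frac{1}{2} + \frac{g_i}{2(a+g_i)} \;>\; \frac{1}{2},
\]
so SPB holds for every $a>0$. Its differential entropy, however, is $\log(2a+2g_i)$, which tends to $+\infty$ as $a\to\infty$. (Simultaneously, the variance $(a+g_i)^2/3$ blows up, which is consistent with the first half.) This single parameterized example establishes the claim.

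There is no genuine obstacle here; the only subtlety is making sure the example satisfies whatever auxiliary regularity the surrounding text assumes. In particular, because the preamble of Section~3.3 posits an unbiased estimator, I deliberately chose the interval $[-a,a+2g_i]$ so that $\E\hat{g}_i=g_i$; this keeps the example compatible with the unbiasedness convention while still decoupling entropy from the sign structure. The takeaway I would emphasize in the concluding sentence is conceptual: bounded variance constrains the \emph{spread} of $\hat{g}_i$ and therefore controls its entropy, whereas SPB constrains only the \emph{sign} of $\hat{g}_i$ and leaves the distribution free to be arbitrarily diffuse.
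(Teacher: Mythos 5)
Your proposal is correct and follows essentially the same route as the paper: the first half is the standard Gaussian maximum-entropy bound $h(\hat{g}_i)\le\tfrac{1}{2}\log(2\pi e\sigma_i^2)$, and the second half exhibits a family of SPB-compliant, unbiased distributions whose spread (and hence differential entropy) is unbounded. The only difference is cosmetic --- the paper takes Gaussians with mean $g_i\ne 0$ and variance tending to infinity, whereas you take uniforms on $[-a,\,a+2g_i]$ with $a\to\infty$; both verifications are valid and equally short.
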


Differential entropy argument is an attempt to bridge our new SPB assumption to one of the most basic assumptions in the literature, bounded variance assumption. Clearly, they are not comparable in the usual sense, and neither one is implied by the other. Still, we propose another viewpoint to the situation and compare such conditions through the lens of information theory. Practical meaning of such observation is that SPB handles a much broader (though not necessarily more important) class of gradient noise than bounded variance condition. In other words, this gives an intuitive measure on how much restriction we put on the noise.

Note that SPB assumption describes the convergence of sign descent methods, which is known to be problem dependent (e.g. see \citep{BaHe}, section 6.2 Results). One should view the SPB condition as a criteria to problems where sign based methods are useful.

\section{A New ``Norm'' for Measuring the Gradients} \label{sec:rho-norm}

In this section we introduce the concept of a norm-like function, which we call {\em $\rho$-norm},  induced from success probabilities. Used to measure gradients in our convergence rates, $\rho$-norm  is a technical tool enabling the analysis. 

\begin{figure}[t]
\vskip 0.2in
\begin{center}
\centerline{\includegraphics[scale=0.5]{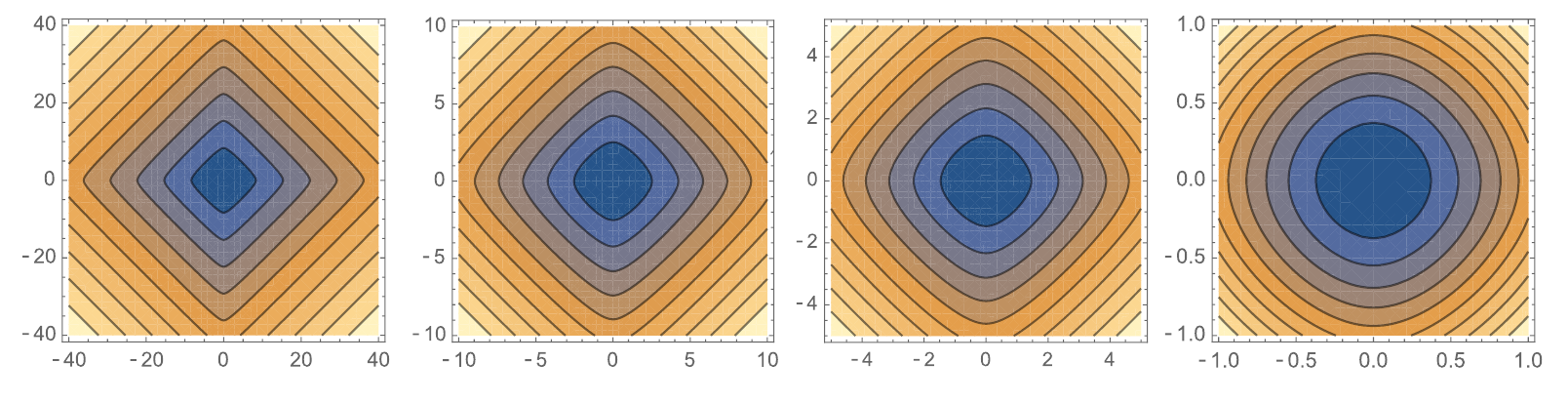}}
\caption{Contour plots of the $l^{1,2}$ norm (\ref{def:l122-norm}) at 4 different scales with fixed noise $\sigma = 1$.}
\label{fig:contours}
\end{center}
\vskip -0.2in
\end{figure}

\begin{definition}[$\rho$-norm]\label{def:rho-norm}
Let $\rho \eqdef \{\rho_i(x)\}_{i=1}^d$ be the collection of probability functions from the SPB assumption. We define  the $\rho$-norm of gradient $g(x)$ via
$$
\|g(x)\|_\rho \eqdef \sum_{i=1}^d (2\rho_i(x)-1)|g_i(x)|.
$$
\end{definition}

Note that mixed $\rho$-norm is not a norm as it may not satisfy the triangle inequality. However, under SPB assumption, it is positive definite as it is a weighted $l^1$ norm with positive (and variable) weights $2\rho_i(x)-1 > 0$. That is, $\|g\|_\rho \ge 0$,  and $\|g\|_\rho = 0$  if and only if $g = 0$.
Although, in general, $\rho$-norm is not a norm in classical sense, it can be reduced to one in special cases. For example, under the assumptions of Lemma~\ref{lemma:main-rho}, $\rho$-norm can be lower bounded by a mixture of the $l^1$ and squared $l^2$ norms:
\begin{equation}\label{def:l122-norm}
\|g\|_\rho = \sum \limits_{i=1}^d (2\rho_i-1)|g_i| \ge \sum \limits_{i=1}^d \frac{g_i^2}{|g_i| + \sqrt{3}\sigma_i} \eqdef \|g\|_{l^{1,2}}.
\end{equation}
Note that $l^{1,2}$-norm is again not a norm. However, it is positive definite, continuous and order preserving, i.e., for any $g^k$, $g$, $\tilde{g}\in\R^d$ we have:
\begin{enumerate}
\item $\|g\|_{l^{1,2}} \ge 0$ and $\|g\|_{l^{1,2}} = 0$ if and only if $g = 0$,
\item $g^k\to g$ (in $l^2$ sense) implies $\|g^k\|_{l^{1,2}} \to \|g\|_{l^{1,2}}$,
\item $0\le g_i\le \tilde{g}_i$ for any $1\le i\le d$ implies $\|g\|_{l^{1,2}} \le \|\tilde{g}\|_{l^{1,2}}$.
\end{enumerate}

From these three properties it follows that $\|g^k\|_{l^{1,2}}\to 0$ implies $g^k\to 0$. These properties  are important as we will measure convergence rate in terms of the $l^{1,2}$ norm in the case of unimodal and symmetric noise assumption.
To understand the nature of the $l^{1,2}$ norm, consider the following two cases when $\sigma_i(x)\le c|g_i(x)| + \tilde{c}$ for some constants $c,\,\tilde{c}\ge0$. If the iterations are in $\varepsilon$-neighbourhood of a minimizer $x^*$ with respect to  the $l^{\infty}$ norm (i.e., $\max_{1\le i\le d}|g_i|\le\varepsilon$), then the $l^{1,2}$ norm is equivalent to scaled $l^2$ norm squared:
$$
\frac{1}{\left(1 + \sqrt{3}c\right)\varepsilon + \sqrt{3}\tilde{c}} \|g\|_2^2 \le \|g\|_{l^{1,2}} \le \frac{1}{\sqrt{3}\tilde{c}} \|g\|_2^2 .
$$
On the other hand, if iterations are away from a minimizer (i.e., $\min_{1\le i\le d}|g_i|\ge L$), then the $l^{1,2}$-norm is equivalent to scaled $l^1$ norm:
$$
\frac{1}{1 + \sqrt{3}(c + \tilde{c}/L)} \|g\|_1 \le \|g\|_{l^{1,2}} \le \frac{1}{1 + \sqrt{3}c} \|g\|_1 .
$$

These equivalences are visible in Figure~\ref{fig:contours}, where we plot the level sets of $g\mapsto \|g\|_{l^{1,2}}$ at various distances from the origin.
Similar mixed norm observation for signSGD was also noted by \citet{BZAA}.
Alternatively, under the assumptions of Lemma~\ref{lemma:strict-rho}, $\rho$-norm can be lower bounded by a weighted $l^1$ norm with positive constant weights $1-\nicefrac{2c_i}{\tau}>0$:
$$
\|g\|_\rho = \sum_{i=1}^d (2\rho_i-1)|g_i| \ge \sum_{i=1}^d (1-\nicefrac{2c_i}{\tau})|g_i|.
$$

\section{Convergence Theory}

Now we turn to our theoretical results of sign based methods. First we give our general convergence rates under the SPB assumption. Afterwards, we extend the theory to parallel setting under the same SPB assumptions with majority vote aggregation. Finally, we explain the limitation of sign-based aggregation for heterogeneous local objectives and analyze a new sign based method, {\em SSDM}, in the homogeneous setting.

\begin{algorithm}[H]
\begin{algorithmic}[1]
\STATE \textbf{Input:} step size $\gamma_k$, current point $x_k$
\STATE $\hat{g}_k \leftarrow \textrm{StochasticGradient}(f, x_k)$
\STATE {\em Option~1:} $x_{k+1} = x_k - \gamma_k\sign\hat{g}_k$
\STATE {\em Option~2:} $x_{k+1} = \argmin\{f(x_k), f(x_k - \gamma_k\sign\hat{g}_k)\}$
\end{algorithmic}
\caption{\sc signSGD}
\label{alg:signSGD}
\end{algorithm}

Throughout the paper we assume that function $f\colon\R^d\to\R$ is nonconvex and lower bounded, i.e., $f(x)\ge f^*$ for all $x\in\R^d$.

\subsection{Convergence analysis for $M=1$}
We start our convergence theory with single node setting, where $f$ is smooth with some non-negative constants $(L_i)_{i=1}^d$, i.e.,
$$
f(y) \le f(x) + \langle \nabla f(x), y-x\rangle + \sum_{i=1}^d \frac{L_i}{2}(y_i-x_i)^2
$$
for all $x,\,y\in\R^d$. Let $\bar L \eqdef \frac{1}{d}\sum_{i=1}^d L_i$. 

We now state our convergence result for signSGD (\ref{eq:method1}) under the general SPB assumption.
\begin{theorem}[see \ref{apx:non-convex-theorem}]\label{non-convex-theorem}
Under the SPB assumption, single node signSGD (Algorithm \ref{alg:signSGD}) with Option 1 and with step sizes $\gamma_k = \gamma_0/\sqrt{k+1}$ converges as follows
\begin{equation}\label{non-convex-rate-min}
\min\limits_{0\le k<K} \E\|\nabla f(x_k)\|_\rho \le \frac{f(x_0) - f^*}{\gamma_0\sqrt{K}} + \frac{3\gamma_0 d\bar{L}}{2}\frac{\log{K}}{\sqrt{K}} \;.
\end{equation}
If $\gamma_k \equiv \gamma>0$, we get $\nicefrac{1}{K}$ convergence to a neighbourhood:
\begin{equation}\label{const-step-rate}
\frac{1}{K}\sum_{k=0}^{K-1}\E\|\nabla f(x_k)\|_\rho \le \frac{f(x_0) - f^*}{\gamma K} +\frac{ \gamma d\bar{L}}{2} \;.
\end{equation}
\end{theorem}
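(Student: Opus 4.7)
The structure is the standard descent-lemma/telescoping argument, but the key new ingredient is converting the coordinate-wise success probabilities into a lower bound on $\E\langle g(x_k), \sign \hat g(x_k)\rangle$ expressed as the $\rho$-norm.

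The plan is to start from coordinate-wise smoothness applied at $x_k$ and $x_{k+1} = x_k - \gamma_k \sign \hat g_k$. Since $(\sign \hat g_{k,i})^2 \le 1$, this yields
\begin{equation*}
f(x_{k+1}) \le f(x_k) - \gamma_k \langle g(x_k), \sign \hat g(x_k)\rangle + \frac{\gamma_k^2}{2}\sum_{i=1}^d L_i = f(x_k) - \gamma_k \langle g(x_k), \sign\hat g(x_k)\rangle + \frac{\gamma_k^2 d\bar L}{2}.
\end{equation*}
Taking conditional expectation given $x_k$ (using that $\hat g_k$ is independent of $x_k$), the step size term is deterministic, so the work reduces to lower bounding $\E[g_i(x_k)\sign \hat g_i(x_k)\mid x_k]$ coordinatewise. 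This is the heart of the argument and where the SPB assumption enters.

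For a coordinate with $g_i(x_k) > 0$, I would write $\E[\sign \hat g_i] = \Pr(\sign \hat g_i = 1) - \Pr(\sign\hat g_i = -1) = \rho_i - \Pr(\sign\hat g_i=-1)$, and use $\Pr(\sign \hat g_i = -1) \le 1 - \rho_i$ (allowing for a possible atom at $0$) to conclude $\E[\sign \hat g_i] \ge 2\rho_i-1$. The case $g_i(x_k) < 0$ is symmetric, and the case $g_i(x_k) = 0$ is trivial. Combining coordinates gives
\begin{equation*}
\E[\langle g(x_k),\sign \hat g(x_k)\rangle\mid x_k] \;\ge\; \sum_{i=1}^d (2\rho_i(x_k)-1)|g_i(x_k)| \;=\; \|\nabla f(x_k)\|_\rho.
\end{equation*}
Plugging this into the smoothness inequality and taking full expectation produces the one-step descent
\begin{equation*}
\gamma_k \,\E\|\nabla f(x_k)\|_\rho \;\le\; \E f(x_k) - \E f(x_{k+1}) + \frac{\gamma_k^2 d\bar L}{2}.
\end{equation*}

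The rest is routine. Summing from $k=0$ to $K-1$, telescoping, and using $f \ge f^*$ yields
$\sum_{k=0}^{K-1}\gamma_k\,\E\|\nabla f(x_k)\|_\rho \le f(x_0)-f^* + \tfrac{d\bar L}{2}\sum_{k=0}^{K-1}\gamma_k^2$. For constant step $\gamma_k\equiv\gamma$, dividing both sides by $\gamma K$ gives \eqref{const-step-rate} immediately. For $\gamma_k = \gamma_0/\sqrt{k+1}$, I would bound the minimum by the weighted average, use the standard estimates $\sum_{k=0}^{K-1}\tfrac{1}{\sqrt{k+1}} \ge \sqrt{K}$ and $\sum_{k=0}^{K-1}\tfrac{1}{k+1}\le 1+\log K$ (absorbed into $3\log K$ for $K$ large enough), and obtain \eqref{non-convex-rate-min}.

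The only genuinely delicate point is the sign-expectation bound; I would likely isolate it as a short sublemma to make clear that it holds even when $\sign\hat g_i$ has an atom at $0$ and even though $\hat g$ is biased (so no cancellation via unbiasedness is available). Everything else is bookkeeping: coordinate-wise smoothness to control the quadratic term by $d\bar L$ (rather than by $\max_i L_i$ as for SGD), and standard series estimates for the step-size schedule.
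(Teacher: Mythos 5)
Your proposal is correct and follows essentially the same route as the paper: coordinate-wise descent lemma, the identity/bound $\E[\langle g_k,\sign\hat g_k\rangle]\ge\|g_k\|_\rho$ derived directly from the success probabilities, telescoping, and the standard harmonic-sum estimates (the paper states the key step as an equality assuming no atom at zero, whereas your inequality version handling a possible atom is a harmless refinement). No gaps.
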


We now comment on the above result:

$\bullet$ \textbf{Generalization.} Theorem \ref{non-convex-theorem} is the  first general result on signSGD for non-convex functions without mini-batching, and with step sizes independent of the total number of iterations $K$. Known convergence results \citep{BWAA, BZAA} on signSGD use mini-batches and/or step sizes dependent on $K$. Moreover, they also use unbiasedness and unimodal symmetric noise assumptions, which are stronger assumptions than our SPB assumption (see Lemma~\ref{lemma:main-rho}). Finally, Theorem 1 in \citep{BZAA} can be recovered from Theorem~\ref{non-convex-theorem} (see Appendix~\ref{sec:recovering}).

$\bullet$ \textbf{Convergence rate.} Rates (\ref{non-convex-rate-min}) and (\ref{const-step-rate}) can be arbitrarily slow, depending on the probabilities $\rho_i$. This is to be expected. At one extreme, if the gradient noise was completely random, i.e., if $\rho_i \equiv 1/2$, then the $\rho$-norm would become identical zero for any gradient vector and rates would be trivial inequalities, leading to divergence as in the counterexample. At other extreme, if there was no gradient noise, i.e., if $\rho_i\equiv 1$, then the $\rho$-norm would be just the $l^1$ norm and we get the rate $\cO(1/\sqrt{K})$ with respect to the $l^1$ norm. However, if  we know that $\rho_i>1/2$, then we can ensure that the method will eventually converge.

$\bullet$ \textbf{Geometry.} The presence of the $\rho$-norm in these rates suggests that, in general, there is no particular geometry (e.g., $l^1$ or $l^2$) associated with signSGD. Instead, the geometry is induced from the success probabilities. For example, in the case of unbiased and unimodal symmetric noise, the geometry is described by the mixture norm $l^{1,2}$.



Next, we state a general convergence rate for Algorithm \ref{alg:signSGD} with Option 2.

\begin{theorem}[see \ref{apx:non-convex-theorem-2}]\label{non-convex-theorem-2}
Under the SPB assumption, signSGD (Algorithm~\ref{alg:signSGD}) with Option 2 and with step sizes $\gamma_k = \gamma_0/\sqrt{k+1}$ converges as follows: 
\begin{equation*}
\frac{1}{K}\sum\limits_{k=0}^{K-1}\E\|\nabla f(x_k)\|_\rho \le \frac{1}{\sqrt{K}} \left[\frac{f(x_0)-f^*}{\gamma_0} + \gamma_0 d\bar{L}\right].
\end{equation*}
In the case of $\gamma_k \equiv \gamma>0$, the same rate as \eqref{const-step-rate} is achieved.
\end{theorem}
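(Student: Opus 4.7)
The update rule under Option~2 yields two pointwise inequalities: $f(x_{k+1}) \le f(x_k)$ and $f(x_{k+1}) \le f(x_k - \gamma_k \sign \hat g(x_k))$. Combining the latter with coordinate-wise smoothness at $y_k \eqdef x_k - \gamma_k \sign \hat g(x_k)$ (whose coordinates differ from $x_k$ in absolute value by exactly $\gamma_k$) gives
\[
f(x_{k+1}) \le f(y_k) \le f(x_k) - \gamma_k \langle g(x_k), \sign \hat g(x_k)\rangle + \frac{\gamma_k^2 d \bar L}{2}.
\]
Taking conditional expectation with respect to $\hat g(x_k)$, and reusing the SPB-based identity $\E[\langle g(x_k), \sign \hat g(x_k)\rangle \mid x_k] = \|g(x_k)\|_\rho$ that is already extracted in the proof of Theorem~\ref{non-convex-theorem}, then taking total expectation with $F_k \eqdef \E f(x_k)$, produces the one-step descent relation
\[
\gamma_k\, \E\|g(x_k)\|_\rho \le F_k - F_{k+1} + \frac{\gamma_k^2 d \bar L}{2}.
\]

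The first pointwise inequality, $f(x_{k+1}) \le f(x_k)$, additionally yields the global monotonicity $F_{k+1} \le F_k$, a property that Option~1 lacks. This is the extra ingredient I plan to use to remove the $\log K$ factor of Theorem~\ref{non-convex-theorem}. Dividing the descent relation by $\gamma_k$ and summing over $k = 0, \ldots, K-1$ reduces the proof to controlling
\[
\sum_{k=0}^{K-1} \frac{F_k - F_{k+1}}{\gamma_k} = \sum_{k=0}^{K-1} c_k (F_k - F_{k+1}), \qquad c_k \eqdef \tfrac{\sqrt{k+1}}{\gamma_0},
\]
which I treat by summation by parts. Abel's identity rewrites the sum as $c_0 F_0 - c_{K-1} F_K + \sum_{k=1}^{K-1}(c_k - c_{k-1})F_k$. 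Since the increments $c_k - c_{k-1}$ are positive and $F_k \le F_0$ by monotonicity, the inner sum is at most $F_0(c_{K-1} - c_0)$, and the whole expression telescopes to $c_{K-1}(F_0 - F_K) \le \sqrt{K}(f(x_0) - f^*)/\gamma_0$.

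The smoothness residual is then handled by the elementary estimate $\sum_{k=0}^{K-1} 1/\sqrt{k+1} \le 2\sqrt{K}$, contributing at most $\gamma_0 d\bar L \sqrt{K}$; dividing the combined bound by $K$ delivers the stated rate exactly. The constant step size addendum requires no Abel trick, as the descent relation telescopes directly into \eqref{const-step-rate}. The conceptual heart of the argument is the pairing of the Option~2 monotonicity $F_{k+1} \le F_k$ with summation by parts; without it, one is forced to absorb $\sum_k \gamma_k^2 = \Theta(\log K)$ into the error term, which is exactly what produces the logarithm in Theorem~\ref{non-convex-theorem}. Everything else — the smoothness step, the SPB identity, and the elementary $\sqrt{K}$-bound on $\sum_k \gamma_k$ — is routine bookkeeping.
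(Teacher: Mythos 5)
Your proposal is correct and follows essentially the same route as the paper: the identical descent relation inherited from the proof of Theorem~\ref{non-convex-theorem}, the Option~2 monotonicity $\E[f(x_{k+1})]\le\E[f(x_k)]$ as the key extra ingredient, and the bound $\sum_{k=1}^K k^{-1/2}\le 2\sqrt{K}$ for the smoothness residual. The only cosmetic difference is that you telescope the weighted sum via Abel summation, whereas the paper simply bounds each nonnegative increment's coefficient $\sqrt{k+1}/\gamma_0$ by $\sqrt{K}/\gamma_0$ before telescoping --- both yield exactly $\sqrt{K}\,(f(x_0)-f^*)/\gamma_0$.
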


Comparing Theorem~\ref{non-convex-theorem-2} with Theorem~\ref{non-convex-theorem}, notice that one can remove the log factor from \eqref{non-convex-rate-min} and bound the average of past gradient norms instead of the minimum. On the other hand, in a big data regime, function evaluations in Algorithm~\ref{alg:signSGD} (Option~2, line~4) are infeasible. Clearly, Option~2 is useful {\em only} in the setup when one can afford function evaluations and has rough estimates about the gradients (i.e., signs of stochastic gradients). This option should be considered within the framework of derivative-free optimization.

\subsection{Convergence analysis in parallel setting} \label{sec:parallel}
In this part we present the convergence result of parallel signSGD (Algorithm \ref{alg:d-signSGD}) with majority vote introduced by \citet{BWAA}. Majority vote is considered within a parameter server framework, where for each coordinate parameter server receives one sign from each node and sends back the sign sent by the majority of nodes. In parallel setting, the training data is shared among the nodes.

\begin{algorithm}[H]
\begin{algorithmic}[1]
\STATE \textbf{Input:} step size $\gamma_k$, current point $x_k$, \# of nodes $M$
\STATE \textbf{on} each node $n$
\STATE \qquad $\hat{g}^n(x_k) \leftarrow \textrm{StochasticGradient}(f, x_k)$
\STATE \textbf{on} server
\STATE \qquad \textbf{get} $\sign\hat{g}^n(x_k)$ \textbf{from} all nodes
\STATE \qquad \textbf{send} $\sign\left[\sum_{n=1}^M \sign\hat{g}^n(x_k)\right]$ \textbf{to} all nodes
\STATE \textbf{on} each node $n$
\STATE \qquad $x_{k+1} = x_k - \gamma_k\sign\left[\sum_{n=1}^M \sign\hat{g}^n(x_k)\right]$
\end{algorithmic}
\caption{\sc Parallel signSGD with Majority Vote}
\label{alg:d-signSGD}
\end{algorithm}

Known convergence results \citep{BWAA, BZAA} use $\cO(K)$ mini-batch size as well as $\cO(1/K)$ constant step size. In the sequel we remove this limitations extending Theorem~\ref{non-convex-theorem} to parallel training. In this case the number of nodes $M$ get involved in geometry introducing new $\rho_M$-norm, which is defined by the regularized incomplete beta function $I$ (see Appendix~\ref{apx:non-convex-theorem-d}).

\begin{definition}[$\rho_M$-norm]
Let $M$ be the number of nodes and $l \eqdef \lfloor\tfrac{M+1}{2}\rfloor$. Define $\rho_M$-norm of gradient $g(x)$ at $x\in\R^d$ via
$$
\|g(x)\|_{\rho_M} \eqdef \sum_{i=1}^d \left(2 I(\rho_i(x); l, l) - 1\right)|g_i(x)|.
$$
\end{definition}
Clearly, $\rho_1$-norm coincides with $\rho$-norm.
Now we state the convergence rate of parallel signSGD with majority vote.
\begin{theorem}[see \ref{apx:non-convex-theorem-d}]\label{non-convex-theorem-d}
Under SPB assumption, parallel signSGD (Algorithm \ref{alg:d-signSGD}) with step sizes $\gamma_k = \gamma_0/\sqrt{k+1}$ converges as follows
\begin{equation}\label{non-convex-d-rate-min}
\min\limits_{0\le k<K} \E\|\nabla f(x_k)\|_{\rho_M}
\le
\frac{f(x_0) - f^*}{\gamma_0\sqrt{K}} + \frac{3\gamma_0 d\bar{L}}{2}\frac{\log{K}}{\sqrt{K}}.
\end{equation}
For constant step sizes $\gamma_k \equiv \gamma>0$, we have convergence up to a level proportional to step size $\gamma$:
\begin{equation}\label{const-step-d-rate}
\frac{1}{K}\sum_{k=0}^{K-1} \E\|\nabla f(x_k)\|_{\rho_M} \le \frac{f(x_0) - f^*}{\gamma K} + \frac{\gamma d\bar{L}}{2}.
\end{equation}
\end{theorem}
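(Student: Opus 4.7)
The plan is to follow the same three-step recipe as for Theorem~\ref{non-convex-theorem}, with the single new ingredient being the coordinate-wise success probability of the aggregated majority-vote estimator. Denote
\[
v_k \eqdef \sign\!\left[\sum_{m=1}^M \sign \hat g^m(x_k)\right]\in\{-1,0,+1\}^d,
\]
so that the update reads $x_{k+1}=x_k-\gamma_k v_k$. Coordinate-wise $L_i$-smoothness of $f$ and the elementary bound $|(v_k)_i|\le 1$ give the one-step descent inequality
\[
f(x_{k+1})\le f(x_k)-\gamma_k\langle \nabla f(x_k),v_k\rangle + \tfrac{\gamma_k^2}{2}\sum_{i=1}^d L_i(v_k)_i^2 \le f(x_k)-\gamma_k\langle \nabla f(x_k),v_k\rangle + \tfrac{\gamma_k^2 d\bar L}{2}.
\]
This mirrors the starting point of Theorem~\ref{non-convex-theorem} verbatim; only the inner product requires a new per-coordinate calculation.

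The heart of the argument is to compute $\E[\langle \nabla f(x_k),v_k\rangle\mid x_k]$ and to identify it with $\|\nabla f(x_k)\|_{\rho_M}$. Fix $i$ with $g_i\eqdef \nabla_i f(x_k)\neq 0$. Since the local signs $\sign \hat g_i^1(x_k),\dots,\sign\hat g_i^M(x_k)$ are independent and each equals $\sign g_i$ with probability $\rho_i(x_k)$, the number of ``correct'' signs is distributed as $\mathrm{Bin}(M,\rho_i(x_k))$. The standard identity
\[
\Pr\!\left(\mathrm{Bin}(M,p)\ge l\right) = I(p;\,l,\,M-l+1)
\]
with $l=\lfloor (M+1)/2\rfloor$ yields $M-l+1 = l$ in the odd-$M$ case, giving directly $\E[g_i(v_k)_i\mid x_k] = |g_i|\bigl(2I(\rho_i(x_k);l,l)-1\bigr)$. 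For even $M$, ties contribute $(v_k)_i=0$ to the inner product, and the same closed form survives via the symmetric-beta identity $2I(p;l,l)-1 = I(p;l+1,l)-I(1-p;l+1,l)$. Summing over $i$ (coordinates with $g_i=0$ contribute zero) and substituting back yields
\[
\E[f(x_{k+1})\mid x_k]\le f(x_k) - \gamma_k\|\nabla f(x_k)\|_{\rho_M} + \tfrac{\gamma_k^2 d\bar L}{2}.
\]

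Taking total expectations, rearranging, and telescoping over $k=0,\dots,K-1$ gives
\[
\sum_{k=0}^{K-1}\gamma_k\E\|\nabla f(x_k)\|_{\rho_M} \le f(x_0)-f^* + \tfrac{d\bar L}{2}\sum_{k=0}^{K-1}\gamma_k^2.
\]
For the decreasing choice $\gamma_k=\gamma_0/\sqrt{k+1}$, I would use the elementary bounds $\sum_{k=0}^{K-1} 1/\sqrt{k+1}\ge\sqrt{K}$ and $\sum_{k=0}^{K-1}1/(k+1)\le 1+\log K\le 3\log K$ (valid for $K\ge 2$) together with $\min_k\E\|\nabla f(x_k)\|_{\rho_M}\le (\sum_k\gamma_k)^{-1}\sum_k\gamma_k\E\|\nabla f(x_k)\|_{\rho_M}$ to obtain \eqref{non-convex-d-rate-min}; for constant $\gamma_k\equiv\gamma$, simply dividing by $\gamma K$ yields \eqref{const-step-d-rate}. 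The main obstacle is the per-coordinate majority-vote computation: correctly recognising $\Pr(\text{majority correct})$ as $I(\rho_i;l,l)$, and, for even $M$, checking that tied votes are absorbed by the symmetric-beta identity so that the same expression is recovered. Once this per-coordinate step is pinned down, the telescoping and step-size bookkeeping are essentially identical to the single-node proof.
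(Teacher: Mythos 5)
Your proposal is correct and follows essentially the same route as the paper: coordinate-wise smoothness for the descent inequality, reduction of the inner product to the binomial tail probability $\Pr(\mathrm{Bin}(M,\rho_i)\ge l)=I(\rho_i;l,M-l+1)$ with the even-$M$ tie case absorbed by the symmetric incomplete-beta identity (the paper uses the equivalent form $I(\rho_i;l+1,l)+I(\rho_i;l,l+1)-1=2I(\rho_i;l,l)-1$), followed by the same telescoping and step-size bookkeeping as in Theorem~\ref{non-convex-theorem}. No gaps.
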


$\bullet$ \textbf{Speedup with respect to $\bm{M}$.}
Note that, in parallel setting with $M$ nodes, the only difference in convergence rates (\ref{non-convex-d-rate-min}) and (\ref{const-step-d-rate}) is the modified $\rho_M$-norm measuring the size of gradients. Using Hoeffding's inequality, we show (see Appendix~\ref{apx:exp-vr}) that $\|g(x)\|_{\rho_M} \to \|g(x)\|_1$ exponentially fast as $M\to\infty$, namely
$$
\left(1-e^{-(2\rho(x)-1)^2 l}\right)\|g(x)\|_1  \le  \|g(x)\|_{\rho_M}  \le  \|g(x)\|_1,
$$
where $\rho(x) = \min_{1\le i\le d}\rho_i(x) > \nicefrac{1}{2}$.
To appreciate the speedup with respect to $M$, consider the noise-free case as a baseline, for which $\rho_i\equiv 1$ and $\|g(x)\|_{\rho_M} \equiv \|g(x)\|_1$. Then, the above inequality implies that $M$ parallel machines reduce the variance of gradient noise exponentially fast.

$\bullet$ \textbf{Number of Nodes.} Theoretically there is no difference between $2l-1$ and $2l$ nodes, and this is not a limitation of the analysis. Indeed, as it is shown in the proof, expected sign vector at the master with $M=2l-1$ nodes is the same as with $M=2l$ nodes:
$$
\E \sign (\hat{g}^{(2l)}_i\cdot g_i) = \E \sign(\hat{g}^{(2l-1)}_i\cdot g_i),
$$
where $\hat{g}^{(M)}$ is the sum of stochastic sign vectors aggregated from nodes. Intuitively, majority vote with even number of nodes, e.g. $M=2l$,  fails to provide any sign with little probability (it is the probability of half nodes voting for $+1$, and half nodes voting for $-1$). However, if we remove one node, e.g. $M=2l-1$, then master receives one sign-vote less but gets rid of that little probability of failing the vote (sum of odd number of $\pm 1$ cannot vanish).

\subsection{Distributed sign methods and the homogeneous setting} \label{sec:distributed}
First, we discuss a fundamental limitation of signSGD for heterogeneous local objectives and then analyze our new sign based method in the homogeneous setting.

\subsubsection{The issue with heterogeneous distributed signSGD}
Consider distributed training where each machine $n\in\{1,2,\dots,M\}$ has its own loss function $f_n(x)$. We argue that in this setting even signGD (with full-batch gradients and no noise) can fail to converge.
Indeed, let us multiply each loss function $f_n(x)$ of $n$th node by an arbitrary positive scalars $w_n>0$. Then the landscape (in particular, stationary points) of the overall loss function
$$
f^w(x) \eqdef \tfrac{1}{M}\sum_{n=1}^M w_nf_n(x)
$$
can change arbitrarily while the iterates of signGD remain the same as the master server aggregates the same signs $\sign(w_n\nabla f_n(x))=\sign\nabla f_n(x)$ regardless of the scalars $w_n>0$.
Thus, distributed signGD is unable to sense the weights $w_n>0$ modifying total loss function $f^w$ and cannot guarantee approximate stationary point unless loss functions $f_n$ have some special structures.
This scaling invariance prevents a general convergence guarantee for arbitrary heterogeneous local objectives when only sign information is communicated. We therefore analyze the method below in the homogeneous setting, where every worker has an unbiased stochastic gradient oracle for the same objective.

\subsubsection{Novel Sign-based Method.}
We design a distributed sign-based method--{\em Stochastic Sign Descent with Momentum (SSDM)}--including two additional layers: {\em stochastic sign} and {\em momentum}. Each worker uses its own stochastic samples or stochastic-gradient oracle for the shared objective.

Motivated by SPB assumption, we introduce our new notion of {\em stochastic sign} to replace the usual deterministic $\sign$.

\begin{definition}[Stochastic Sign]\label{def:stochastic-sign}
We define the stochastic sign operator $\widetilde{\sign}:\R^d\to\R^d$ via
\begin{equation*}
\left(\widetilde{\sign}\,g\right)_i =
\begin{cases}
+1, & \text{ with probability } \frac{1}{2} + \frac{1}{2}\frac{g_i}{\|g\|} \\
-1, & \text{ with probability } \frac{1}{2} - \frac{1}{2}\frac{g_i}{\|g\|}
\end{cases}
\end{equation*}
for $1 \le i \le d$ and $\widetilde{\sign}\,\mathbf{0} = \mathbf{0}$ with probability $1$.
\end{definition}

Technical importance of stochastic $\widetilde{\sign}$ is twofold. First, it satisfies the SPB assumption automatically, that is
$$
\Pr( (\widetilde{\sign}\,g)_i = \sign g_i) = \frac{1}{2} + \frac{1}{2}\frac{|g_i|}{\|g\|} > \frac{1}{2},
$$
if $g_i\ne 0$. Second, unlike the deterministic $\sign$ operator, it is unbiased with scaling factor $\|g\|$, namely $\E[\|g\|\;\widetilde{\sign}\,g] = g$. We describe our SSDM method formally in Algorithm \ref{alg:ssdm}.

\begin{algorithm}[H]
\begin{algorithmic}[1]
\STATE \textbf{Input:} step size parameter $\gamma$, momentum parameter $\beta$, \# of nodes $M$
\STATE \textbf{Initialize:} $x_0\in\R^d,\; m_{-1}^n = \hat{g}_0^n$ for all $n\in\{1, 2, \dots, M\}$
\FOR{$k = 0, 1, \dots, K-1$}
\STATE \qquad \textbf{on} each node $n$
\STATE \qquad\qquad $\hat{g}^n_k \leftarrow \textrm{StochasticGradient}(f, x_k)$ \hfill { \scriptsize Independent stochastic sample of $f$}
\STATE \qquad\qquad $m^n_k = \beta m_{k-1}^n + (1-\beta)\hat{g}_k^n$ \hfill { \scriptsize Update the momentum}
\STATE \qquad\qquad \textbf{send} $s^n_k \eqdef \widetilde{\sign}\;m_k^n$ {\bf to} the server \hfill { \scriptsize Communicate to the server}
\STATE \qquad\textbf{on} server
\STATE \qquad\qquad \textbf{send} $s_k \eqdef \sum_{n=1}^M s_k^n$ {\bf to} all nodes \hfill { \scriptsize Communicate to all nodes}
\STATE \qquad\textbf{on} each node $n$
\STATE \qquad\qquad $x_{k+1} = x_k - \frac{\gamma}{M}s_k$ \hfill { \scriptsize Main step: Update the global model}
\ENDFOR
\end{algorithmic}
\caption{\sc Stochastic Sign Descent with Momentum (SSDM)}
\label{alg:ssdm}
\end{algorithm}

For the convergence analysis below, we consider the homogeneous setting. The workers may use independent stochastic samples or separate stochastic oracles. We model stochastic gradient oracles using the standard bounded variance condition defined below:

\begin{assumption}[Bounded Variance]\label{asum-BV}
For any $x\in\R^d$, each node $n$ has access to an unbiased estimator $\hat{g}^n(x)$ with bounded variance $(\sigma^n)^2\ge0$, namely
$$
\E\left[\hat{g}^n(x)\right] = \nabla f(x), \quad \E\left[\|\hat{g}^n(x) - \nabla f(x)\|^2\right] \le (\sigma^n)^2.
$$
\end{assumption}

Now, we present our convergence result for SSDM method.

\begin{theorem}[see \ref{apx:ssdm}]\label{thm:ssdm}
We assume that $f$ is $L$-smooth and lower bounded. Under Assumption \ref{asum-BV}, $K\ge1$ iterations of SSDM (Algorithm \ref{alg:ssdm}) with momentum parameter $\beta=1-\frac{1}{\sqrt{K}}$ and step-size $\gamma=\frac{1}{K^{\nicefrac{3}{4}}}$ guarantee
\begin{equation*}
\frac{1}{K}\sum_{k=0}^{K-1}\E\|\nabla f(x^k)\|
\le \frac{1}{K^{\nicefrac{1}{4}}}\left[ 3\delta_f + 16\tilde{\sigma} + 8L\sqrt{d} + \frac{3Ld}{\sqrt{K}} \right],
\end{equation*}
where $\delta_f = f(x_0)-f^*, \tilde{\sigma} = \frac{1}{M}\sum\limits_{n=1}^M \sigma^n$.
\end{theorem}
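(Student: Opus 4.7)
The plan is to adapt the standard normalized-momentum-SGD analysis (in the spirit of Cutkosky-Mehta-style proofs) to SSDM's distributed, stochastic-sign setting. The argument has four stages: a descent lemma, expectation over the stochastic-sign noise, a momentum tracking-error bound, and telescoping with the prescribed parameter choices.

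First, I would derive a per-step descent inequality. Since $f=\tfrac1M\sum_n f_n$ with each $f_n$ being $L^n$-smooth, $f$ is $\tilde L$-smooth. The update $x_{k+1}-x_k=-\tfrac{\gamma}{M}s_k$ has each coordinate of $s_k/M$ in $[-1,1]$, so $\|x_{k+1}-x_k\|^2\le\gamma^2 d$. Smoothness then gives
\[
f(x_{k+1})\le f(x_k)-\tfrac{\gamma}{M}\langle\nabla f(x_k),s_k\rangle+\tfrac{\tilde L}{2}\gamma^2 d.
\]
Conditioning on all moments $m_k^1,\dots,m_k^M$ and using the defining property $\E[\widetilde{\sign}\,g\mid g]=g/\|g\|$ turns the inner product into $\sum_n\langle\nabla f(x_k),m_k^n/\|m_k^n\|\rangle$. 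Next, I would apply the auxiliary inequality $\langle a,b/\|b\|\rangle\ge\|a\|-2\|a-b\|$ (which follows from $\|a/\|a\|-b/\|b\|\|\le 2\|a-b\|/\|a\|$ and Cauchy-Schwarz), producing
\[
\E\bigl[f(x_{k+1})-f(x_k)\bigr]\le-\gamma\,\E\|\nabla f(x_k)\|+\tfrac{2\gamma}{M}\sum_{n}\E\|\nabla f(x_k)-m_k^n\|+\tfrac{\tilde L}{2}\gamma^2 d.
\]

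Second, I would control the per-node momentum residual $e_k^n\eqdef m_k^n-\nabla f_n(x_k)$. Substituting the recursion $m_k^n=\beta m_{k-1}^n+(1-\beta)\hat g_k^n$ yields
\[
e_k^n=\beta e_{k-1}^n+\beta[\nabla f_n(x_{k-1})-\nabla f_n(x_k)]+(1-\beta)[\hat g_k^n-\nabla f_n(x_k)].
\]
Using $L^n$-smoothness of $f_n$ together with $\|x_k-x_{k-1}\|\le\gamma\sqrt d$, the bounded-variance assumption, and Young's inequality with parameter $\epsilon=(1-\beta)/\beta$ gives
\[
\E\|e_k^n\|^2\le\beta\,\E\|e_{k-1}^n\|^2+\tfrac{\beta^2}{1-\beta}(L^n)^2\gamma^2 d+(1-\beta)^2\sigma_n^2.
\]
Solving the geometric recursion, taking square roots via Jensen, and substituting $\gamma=K^{-3/4}$ and $1-\beta=K^{-1/2}$ produces $\E\|e_k^n\|=\cO\bigl((L^n\sqrt d+\sigma_n)K^{-1/4}\bigr)$ plus an initialization term decaying like $\beta^{(k+1)/2}$.

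Finally, I would telescope the descent inequality over $k=0,\dots,K-1$, divide by $K\gamma$, and sum the geometric initialization tail (which contributes $\cO(\sqrt K)$ and is absorbed into the leading terms). With the chosen step size $\gamma=K^{-3/4}$, the four contributions $\delta_f/(\gamma K)$, $\tilde\sigma$, $\tilde L\sqrt d\,K^{-1/4}$, and $\tilde L\gamma d$ all collapse to the $K^{-1/4}$ rate with the precise constants in the statement. The \emph{main obstacle} I anticipate is reconciling the momentum bound (which controls $\|m_k^n-\nabla f_n(x_k)\|$, a \emph{local} residual) with the descent step (which requires $\|\nabla f(x_k)-m_k^n\|$, a \emph{global} residual). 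These differ by the data-heterogeneity term $\|\nabla f(x_k)-\nabla f_n(x_k)\|$, which is not a priori bounded by $\sigma_n$. Making the proof go through likely requires exploiting $\tfrac{1}{M}\sum_n\nabla f_n(x_k)=\nabla f(x_k)$ inside the aggregated sum $\sum_n m_k^n/\|m_k^n\|$ so that heterogeneity cancels in aggregate rather than per-node—that is, applying the unit-vector lower bound on the full sum rather than summand-by-summand.
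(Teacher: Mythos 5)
Your outline matches the paper's proof in all essential respects: the descent step via per-node smoothness and $\|s_k/M\|^2\le dM^2$ (so $\|x_{k+1}-x_k\|^2\le\gamma^2 d$), conditioning on the momenta so that the stochastic sign contributes $m_k^n/\|m_k^n\|$ in expectation, a Cutkosky--Mehta-type inner-product lemma, a momentum-error recursion, and telescoping with $\beta=1-K^{-\nicefrac{1}{2}}$, $\gamma=K^{-\nicefrac{3}{4}}$. Two minor remarks. First, the paper uses the specific bound $-\langle a,b\rangle/\|a\|\le-\tfrac{1}{3}\|b\|+\tfrac{8}{3}\|a-b\|$ with $a=m_k^n$ and $b=\nabla f(x_k)$, and the stated constants $3,16,8,3$ come from the factors $\tfrac{1}{3},\tfrac{8}{3}$; your tighter inequality $\langle a,b/\|b\|\rangle\ge\|a\|-2\|a-b\|$ is also correct but would produce different (smaller) constants, not ``the precise constants in the statement.'' Second, the paper runs the error recursion on first moments $\E\|\hat\epsilon_k^n\|$ by unrolling, rather than on second moments via Young's inequality; both are workable.

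The substantive point is exactly the obstacle you flag at the end, and your proposed resolution does not go through. Because each node normalizes its \emph{own} momentum, the descent inequality involves $\sum_n\langle\nabla f(x_k),m_k^n/\|m_k^n\|\rangle$, and the nonlinearity of $b\mapsto b/\|b\|$ prevents the heterogeneity from cancelling in aggregate: the unit-vector lower bound must be applied summand-by-summand, so you cannot exploit $\tfrac{1}{M}\sum_n\nabla f_n=\nabla f$ inside the sum. What the paper actually does is define the error directly against the \emph{global} gradient, $\hat\epsilon_k^n\eqdef m_k^n-\nabla f(x_k)$, so that the innovation in the recursion is $\epsilon_k^n=\hat g_k^n-\nabla f(x_k)$, and then invoke $\E[\langle\epsilon_k^n,\epsilon_{k'}^n\rangle]\le\sigma_n^2$ for $k=k'$ and $=0$ for $k\ne k'$. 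That is precisely the step you are worried about: under Assumption~\ref{asum-BV} as stated, $\hat g_k^n$ is unbiased for $\nabla f_n(x_k)$, so $\epsilon_k^n$ has conditional mean $\nabla f_n(x_k)-\nabla f(x_k)$ and its second moment picks up the heterogeneity $\|\nabla f_n(x_k)-\nabla f(x_k)\|^2$, which $\sigma_n^2$ does not control. Your instinct that this is the delicate point is correct; to complete the argument one must either read the variance bound as holding around the full gradient $\nabla f$ (which is how the paper's displayed inequalities are used), or add a bounded-heterogeneity assumption --- the ``cancel in aggregate'' idea does not supply a substitute.
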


$\bullet$ \textbf{Optimal rate using sign bits only.} Note that, for non-convex distributed training, SSDM has the same optimal asymptotic rate $\cO(\varepsilon^{-4})$ as SGD.
In contrast, existing analyses of signSGD and its momentum version Signum \cite{BWAA,BZAA} require increasingly larger mini-batches over the course of training. The shared-data guarantee above is distinct from optimization with fully heterogeneous or partitioned local objectives, for which the scaling argument in Section~\ref{sec:distributed} prevents a general sign-only convergence guarantee.
A general approach to handle biased compression operators, satisfying certain contraction property, is the {\em error feedback (EF)} mechanism proposed by \citet{SFDLY}.
In particular, EF-signSGD method of \citet{SQSM} fixes the convergence issues of signSGD in single node setup, overcoming SBP assumption.
Furthermore, for distributed training, \citet{DoubleSqueeze2019} applied the error feedback trick both for the server and nodes in their \mbox{DoubleSqueeze} method maintaining the same asymptotic rate with bi-directional compression.
However, in these methods, the contraction property of compression operator used by error feedback forces to communicate the magnitudes of local stochastic gradients together with the signs. This is not the case for sign-based methods considered in this work, where only sign bits are communicated between nodes and server.

$\bullet$ {\bf Noisy signSGD.}
In some sense, stochastic sign operator (see Definition \ref{def:stochastic-sign}) can be viewed as noisy version of standard deterministic $\sign$ operator and, similarly, our SSDM method can be viewed as noisy variant of signSGD with momentum. This observation reveals a connection to the noisy signSGD method of \citet{noisySignSGD}. Despite some similarities between the two methods, there are several technical aspects that SSDM excels their noisy signSGD.
First, the noise they add is {\em artificial} and requires a special care: too much noise blows the convergence, too little noise is unable to shrink the gap between median and mean. Moreover, as it is discussed in their paper, the variance of the noise must depend on $K$ (total number of iterations) and tend to $\infty$ with $K$ to guarantee convergence to stationary points in the limit. Meanwhile, the noise of SSDM is {\em natural} and does not need to be adjusted.
Next, the convergence bound (17) of \citep{noisySignSGD} is harder to interpret than the bound in our Theorem \ref{thm:ssdm} involving $l^2$ norms of the gradients {\em only}.
Besides, the convergence rate with respect to squared $l^2$ norm is $\cO(\nicefrac{d^{3/4}}{K^{1/4}})$, while the rate of SSDM with respect to squared $l^2$ norm is $\cO(\nicefrac{d}{\sqrt{K}})$, which is $\cO(\nicefrac{K^{1/4}}{d^{1/4}})$ times {\em faster}.
Lastly, it is explicitly written before Theorem 5 that the analysis assumes {\em full} gradient computation for all nodes. In contrast, SSDM is analyzed under a more general stochastic gradient oracle. 

{\bf $\bullet$ All-reduce compatible.} In contrast to signSGD with majority vote aggregation, SSDM supports partial aggregation of compressed stochastic signs $s_k^n$. In other words, compressed signs $s_k^n$ can be directly summed without additional decompression-compression steps. This allows SSDM to be implemented with efficient {\em all-reduce} operation instead of slower {\em all-gather} operation. Besides SSDM, only a few compression schemes in the literature satisfy this property and can be implemented with {\em all-reduce} operation, e.g., SGD with random sparsification \cite{sparseSGD}, GradiVeQ \cite{GradiVeQ}, PowerSGD \cite{PowerSGD}.

Finally, we show that the improved convergence theory and low communication cost of SSDM is due to the use of {\em both} stochastic sign operator and momentum term.

{\bf $\bullet$  SSDM without stochastic sign.} If we replace stochastic sign by deterministic sign in SSDM, then the resulting method {\em can provably diverge} even when full gradients are computed by all nodes. In fact, the counterexample \eqref{counter-ex-1node} in Section~\ref{sec:counterexample} can be easily extended to distributed setting and can handle momentum. Indeed, consider $M=2$ nodes owning functions $f_n(x) = \<a_n,x\>^2,\; n=1,2$ with $a_1,a_2$ as defined in \eqref{counter-ex-1node} and initial point $x_0\in Z=\{(z_1,z_2)\colon z_1+z_2=2\}$. Since $\nabla f_n(x) = 2\<a_n,x\>a_n \in span(a_n)$, we imply $m_k^n\in span(a_n)$ for any value of parameter $\beta$ and for all iterate $k\ge0$ (see lines 2 and 6 of Algorithm \ref{alg:ssdm}). Hence, $\sign m_k^n = \pm\sign a_n = \pm\left[\begin{smallmatrix} -1 \\ 1\end{smallmatrix}\right]$. Since $s_k = \sign m_k^1 + \sign m_k^2 \in span(\left[\begin{smallmatrix} -1 \\ 1\end{smallmatrix}\right])$ (see line 9), this means that the method is again stuck along the line $Z$ as $\frac{\gamma}{M}s_k \in span(\left[\begin{smallmatrix} -1 \\ 1\end{smallmatrix}\right])$ (see line 11) for any value of $\gamma$.

{\bf $\bullet$  SSDM without momentum.} It is possible to obtain the same asymptotic convergence rate without the momentum term (i.e., $\beta=0$). In this case, if all nodes also send the norms $\|\hat{g}_k^n\|$ to the server then the method can be analyzed by a standard analysis of distributed SGD with an unbiased compression. However, the drawback of this approach is the {\em higher communication cost}. While the overhead of worker-to-server communication is negligible (one extra float), the reverse server-to-worker communication becomes costly as the aggregated updates are dense (all entries are floats) as opposed to the original SSDM (all entries are integers).

\begin{figure}[ht]
\begin{center}
\centerline{\includegraphics[width=0.75\columnwidth]{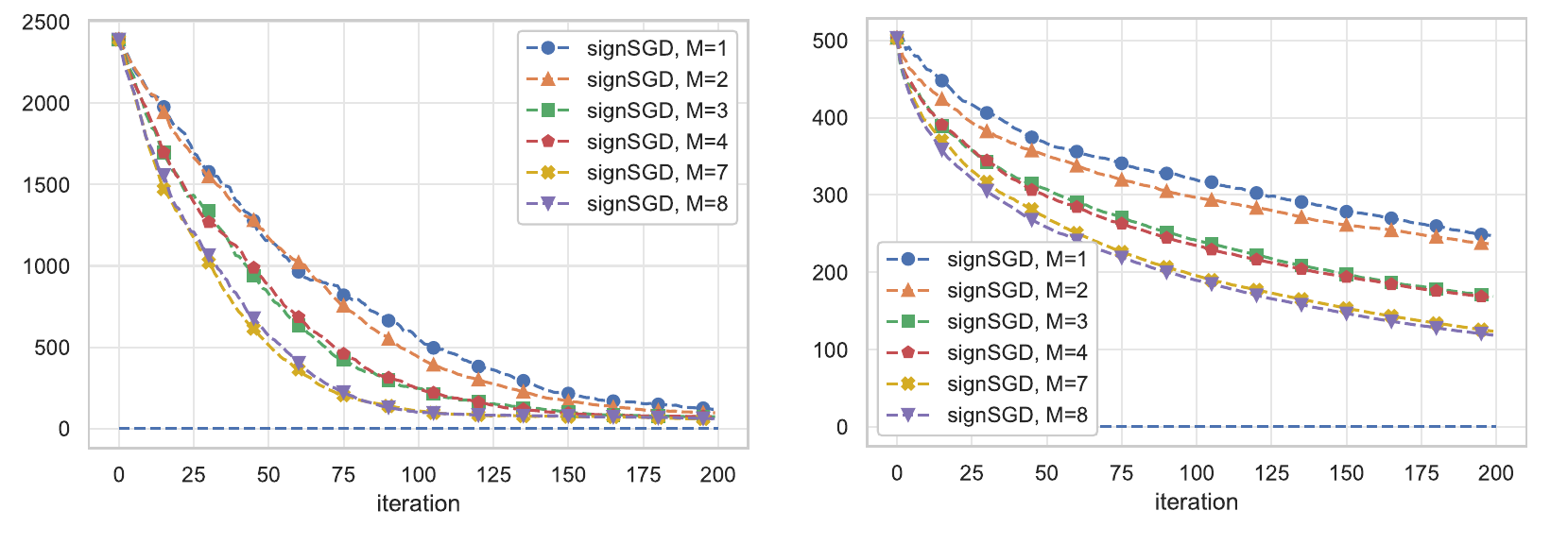}}
\caption{Experiments on distributed signSGD with majority vote using Rosenbrock function. Plots show function values with respect to iterations averaged over 10 repetitions. Left plot used constant step size $\gamma=0.02$, right plot used variable step size with $\gamma_0=0.02$. We set mini-batch size 1 and used the same initial point. Dashed blue lines mark the minimum.}
\label{fig:d-exp}
\end{center}
\vskip -0.4in
\end{figure}

\section{Experiments}\label{sec:exp}

We verify several aspects of our theoretical results experimentally using the MNIST dataset with feed-forward neural network (FNN) and the well known Rosenbrock (non-convex) function with $d=10$ variables:
$$
f(x) = \sum_{i=1}^{d-1} f_i(x), \quad\text{where}\quad f_i(x) = 100(x_{i+1} - x_i^2)^2 + (1-x_i)^2.
$$

\subsection{Minimizing the Rosenbrock function}

The Rosenbrock function is a classic example of non-convex function, which is used to test the performance of optimization methods. We chose this low dimensional function in order to estimate the success probabilities effectively in a reasonable time and to expose theoretical connection.

Stochastic formulation of the minimization problem for Rosenbrock function is as follows: at any point $x\in\R^d$ we have access to {\em biased} stochastic gradient $\hat{g}(x) = \nabla f_i(x) + \xi$, where index $i$ is chosen uniformly at random from $\{1, 2, \dots, d-1\}$ and $\xi\sim\mathcal{N}(0, \nu^2 I)$ with $\nu>0$.

Figure~\ref{fig:d-exp} illustrates the effect of multiple nodes in distributed training with majority vote. As we see increasing the number of nodes improves the convergence rate. It also supports the claim that in expectation there is no improvement from $2l-1$ nodes to $2l$ nodes. 

Figure \ref{fig:const-lr} shows the robustness of SPB assumption in the convergence rate (\ref{const-step-rate}) with constant step size. We exploited four levels of noise in each column to demonstrate the correlation between success probabilities and convergence rate. In the first experiment (first column) SPB assumption is violated strongly and the corresponding rate shows divergence. In the second column, probabilities still violating SPB assumption are close to the threshold and the rate shows oscillations. Next columns express the improvement in rates when success probabilities are pushed to be close to 1.
More experiments on the Rosenbrock function are moved to Appendix \ref{apx:add-exp}.


\begin{figure*}[ht]
\begin{center}
\centerline{\includegraphics[width=0.9\columnwidth]{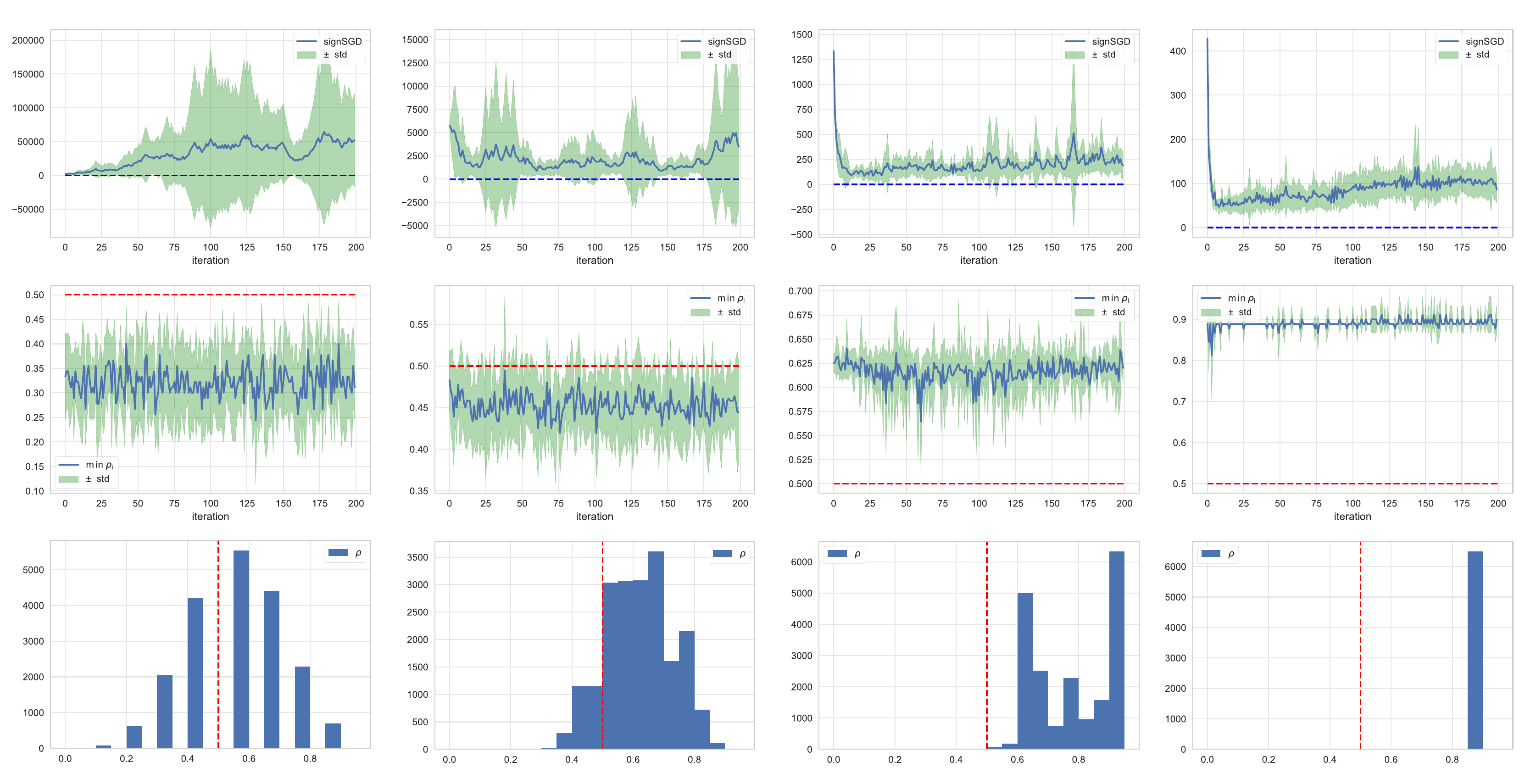}}
\caption{Performance of signSGD with constant step size ($\gamma=0.25$) under four different noise levels (mini-batch size 1, 2, 5, 8) using Rosenbrock function. Each column represent a separate experiment with function values, evolution of minimum success probabilities and the histogram of success probabilities throughout the iteration process. Dashed blue line in the first row is the minimum value. Dashed red lines in second and third rows are thresholds $1/2$ of success probabilities. The shaded area in first and second rows shows standard deviation obtained from ten repetitions.}
\label{fig:const-lr}
\end{center}
\vskip -0.4in
\end{figure*}


\subsection{Training FNN on the MNIST dataset}

We trained a single layer feed-forward network on the MNIST with two different batch construction strategies. The first construction is the standard way of training networks: before each epoch we shuffle the training dataset and choose batches sequentially. In the second construction, first we split the training dataset into two parts, images with labels 0, 1, 2, 3, 4 and images with labels 5, 6, 7, 8, 9. Then each batch of images were chosen from one of these parts with equal probabilities.
We make the following observations based on our experiments depicted in Figure \ref{figure:split-batch} and Figure \ref{figure:standard-batch}.

$\bullet$ \textbf{Convergence with multi-modal and skewed gradient distributions.} Due to the split batch construction strategy we unfold multi-modal and asymmetric distributions for stochastic gradients in Figure \ref{figure:split-batch}. With this experiment we conclude that sign based methods can converge under various gradient distributions which is allowed from our theory.

$\bullet$ \textbf{Effectiveness in the early stage of training.} Both experiments show that in the beginning of the training, signSGD is more efficient than SGD when we compare accuracy against communication. This observation is supported by the theory as at the start of the training success probabilities are bigger (see Lemma~\ref{lemma:main-rho}) and lower bound for mini-batch size is smaller (see Lemma \ref{lemma:SPB-via-minibatch}).

$\bullet$ \textbf{Bigger batch size, better convergence.} Figure \ref{figure:standard-batch} shows that the training with larger batch size improves the convergence as backed by the theory (see Lemmas \ref{lemma:strict-rho} and \ref{lemma:SPB-via-minibatch}).

$\bullet$ \textbf{Generalization effect.} Another aspect of sign based methods which has been observed to be problematic, in contrast to SGD, is the generalization ability of the model (see also \citep{BaHe}, Section~6.2 Results). In the experiment with standard batch construction (see Figure~\ref{figure:standard-batch}) we notice that test accuracy is growing with training accuracy. However, in the other experiment with split batch construction (see Figure~\ref{figure:split-batch}), we found that test accuracy does not get improved during the second half of the training while train accuracy grows consistently with slow pace.

\begin{figure*}[ht]
\begin{center}
\centerline{\includegraphics[width=\columnwidth]{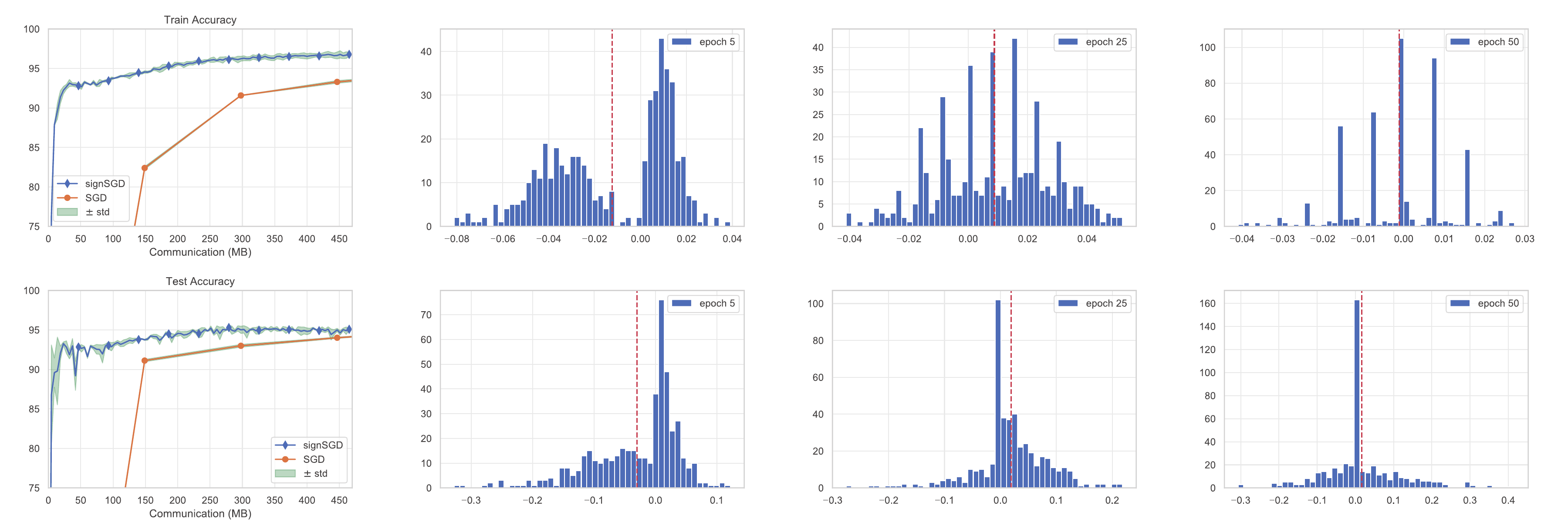}}
\caption{Convergence of signSGD and comparison with SGD on the MNIST dataset using the split batch construction strategy. The budget of gradient communication (MB) is fixed and the network is a single hidden layer FNN. We first tuned the constant step size over logarithmic scale $\{1, 0.1, 0.01, 0.001, 0.0001\}$ and then fine tuned it.
First column shows train and test accuracies with mini-batch size 128 and averaged over 3 repetitions. We chose two weights (empirically, most of the network biases would work) and plotted histograms of stochastic gradients before epochs 5, 25 and 50. Dashed red lines on histograms indicate the average values.}
\label{figure:split-batch}
\end{center}
\end{figure*}

\begin{figure*}[t]
\begin{center}
\centerline{\includegraphics[width=\columnwidth]{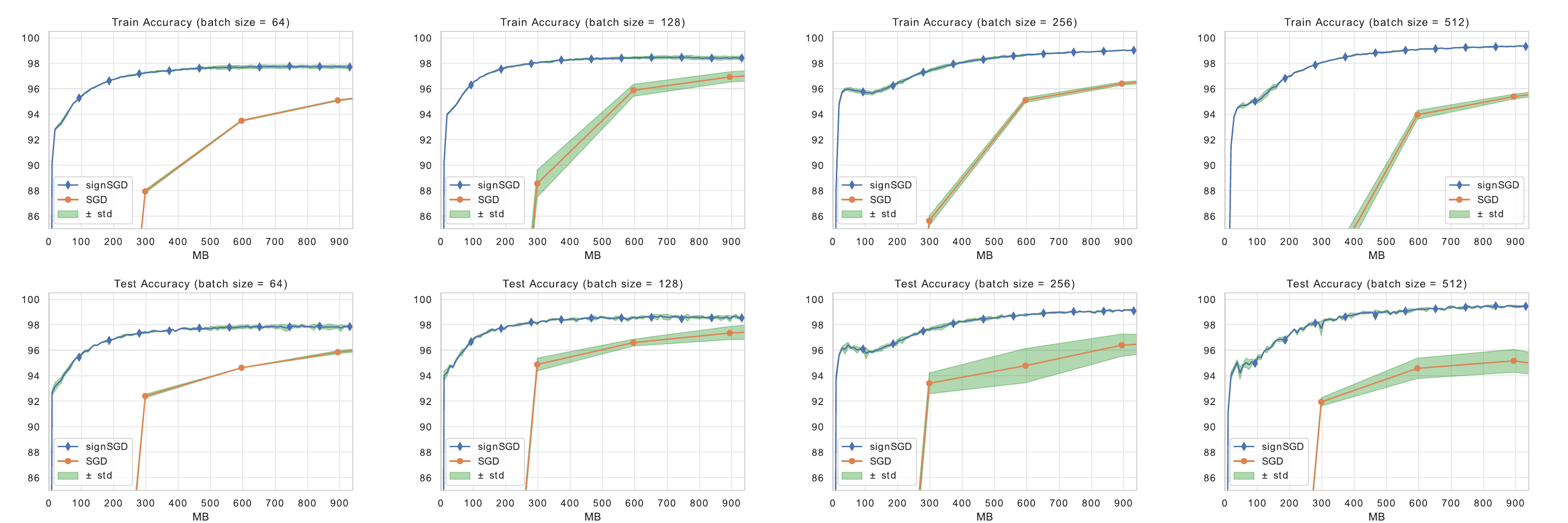}}
\caption{Comparison of signSGD and SGD on the MNIST dataset with a fixed budget of gradient communication (MB) using single hidden layer FNN and the standard batch construction strategy. For each batch size, we first tune the constant step size over logarithmic scale $\{10, 1, 0.1, 0.01, 0.001\}$ and then fine tune it. Shaded area shows the standard deviation from 3 repetition.}
\label{figure:standard-batch}
\end{center}
\end{figure*}

\clearpage
\section*{Acknowledgments}

Mher Safaryan thanks El Houcine Bergou, Konstantin Mishchenko, and Samuel Horváth for helpful discussions and feedback on the paper, and Anton Pliusnin for identifying the issue with Theorem~8 in the heterogeneous setting.

\bibliography{signsgd_references}

@conference{PowerSGD,
title = {{PowerSGD}: Practical Low-Rank Gradient Compression for Distributed Optimization},
author = {Thijs Vogels and Sai Praneeth Karimireddy and Martin Jaggi},
booktitle = {33th Advances in Neural Information Processing Systems},
year = {2019}
}

@conference{GradiVeQ,
title = {{GradiVeQ}: Vector Quantization for Bandwidth-Efficient Gradient Aggregation in Distributed {CNN} Training},
author = {Mingchao Yu and Zhifeng Lin and Krishna Narra and Songze Li and Youjie Li and Nam Sung Kim and Alexander Schwing and Murali Annavaram and Salman Avestimehr},
booktitle = {32th Advances in Neural Information Processing Systems},
year = {2018}
}

@conference{sparseSGD,
title = {Gradient sparsification for communication-efficient distributed optimization},
author = {Jianqiao Wangni and Jialei Wang and Ji Liu and Tong Zhang},
booktitle = {32th Advances in Neural Information Processing Systems},
year = {2018}
}

@conference{noisySignSGD,
title = {Distributed Training with Heterogeneous Data: Bridging Median- and Mean-Based Algorithms},
author = {Xiangyi Chen and Tiancong Chen and Haoran Sun and Zhiwei Steven Wu and Mingyi Hong},
booktitle = {34th Conference on Neural Information Processing Systems},
year = {2020}
}

@conference{DoubleSqueeze2019,
  title =    {$\texttt{DoubleSqueeze}$: Parallel Stochastic Gradient Descent with Double-pass Error-Compensated Compression},
  author =   {Tang, Hanlin and Yu, Chen and Lian, Xiangru and Zhang, Tong and Liu, Ji},
  booktitle =    {Int. Conf. Machine Learning},
  pages =    {6155--6165},
  year =   {2019},
  volume =   {PMLR 97},
  }

@InProceedings{N-SGD-M,
title = {Momentum Improves Normalized {SGD}},
author = {Cutkosky, Ashok and Mehta, Harsh},
booktitle = {Proceedings of the 37th International Conference on Machine Learning},
pages = {2260--2268},
year = {2020},
editor = {Hal Daumé III and Aarti Singh},
volume = {119},
series = {Proceedings of Machine Learning Research},
address = {Virtual},
month = {13--18 Jul},
publisher = {PMLR},
pdf = {http://proceedings.mlr.press/v119/cutkosky20b/cutkosky20b.pdf},
url = {http://proceedings.mlr.press/v119/cutkosky20b.html},
abstract = {We provide an improved analysis of normalized SGD showing that adding momentum provably removes the need for large batch sizes on non-convex objectives. Then, we consider the case of objectives with bounded second derivative and show that in this case a small tweak to the momentum formula allows normalized SGD with momentum to find an $\epsilon$-critical point in $O(1/\epsilon^{3.5})$ iterations, matching the best-known rates without accruing any logarithmic factors or dependence on dimension. We provide an adaptive learning rate schedule that automatically improves convergence rates when the variance in the gradients is small. Finally, we show that our method is effective when employed on popular large scale tasks such as ResNet-50 and BERT pretraining, matching the performance of the disparate methods used to get state-of-the-art results on both tasks.}
}

@inproceedings{AGLTV,
title = {{QSGD}: Communication-Efficient {SGD} via Gradient Quantization and Encoding},
author = {Alistarh, Dan and Grubic, Demjan and Li, Jerry and Tomioka, Ryota and Vojnovic, Milan},
booktitle = {Advances in Neural Information Processing Systems 30},
pages = {1709-1720},
year = {2017}
}

@inproceedings{BaHe,
title = {Dissecting {Adam}: The Sign, Magnitude and Variance of Stochastic Gradients},
author = {Balles, Lukas and Hennig, Philipp},
booktitle = {Proceedings of the 35th International Conference on Machine Learning},
pages = {404-413},
year = {2018}
}

@inproceedings{BWAA,
title = {sign{SGD}: Compressed Optimisation for Non-Convex Problems},
author = {Bernstein, Jeremy and Wang, Yu-Xiang and Azizzadenesheli, Kamyar and Anandkumar, Animashree},
booktitle = {Proceedings of the 35th International Conference on Machine Learning},
pages = {560-569},
volume =  {80},
publisher = {PMLR},
year = {2018}
}

@inproceedings{BZAA,
title = {sign{SGD} with majority vote is communication efficient and fault tolerant},
author = {Bernstein, Jeremy and Zhao, Jiawei and Azizzadenesheli, Kamyar and Anandkumar, Animashree},
booktitle = {International Conference on Learning Representations},
pages = {},
year = {2019}
}

@conference{BoLe,
title = {Large scale online learning},
author = {Bottou, L\'eon and Le Cun, Yann},
booktitle = {Advances in Neural Information Processing Systems},
year = {2003}
}

@conference{CCC,
title = {Stochastic Spectral Descent for Restricted Boltzmann Machines},
author = {Carlson, David and Cevher, Volkan and Carin, Lawrence},
booktitle = {International Conference on Artificial Intelligence and Statistics (AISTATS)},
pages = {111-119},
year = {2015}
}

@conference{DHS,
title = {Adaptive subgradient methods for online learning and stochastic optimization},
author = {Duchi, John and Hazan, Elad and Singer, Yoram},
booktitle = {Journal of Machine Learning Research},
pages = {2121–2159},
year = {2011}
}

@conference{GhLa,
title = {Stochastic first-and zeroth-order methods for nonconvex stochastic programming},
author = {Ghadimi, Saeed and Lan, Guanghui},
booktitle = {SIAM Journal on Optimization},
pages = {2341–2368},
volume = {23(4)},
year = {2013}
}

@conference{KFJ,
title = {Distributed learning with compressed gradients},
author = {Khirirat, Sarit and Feyzmahdavian, Hamid Reza and Johansson, Mikael},
booktitle = {arXiv preprint arXiv:1806.06573},
pages = {},
year = {2018}
}

@conference{KiBa,
title = {Adam: A method for stochastic optimization},
author = {Kingma, Diederik and Ba, Jimmy},
booktitle = {International Conference on Learning Representations},
pages = {},
year = {2015}
}

@conference{KSH,
title = {Imagenet classification with deep convolutional neural networks},
author = {Krizhevsky, Alex and Sutskever, Ilya and Hinton, Geoffrey E.},
booktitle = {Advances in Neural Information Processing Systems},
pages = {1097–1105},
year = {2012}
}

@conference{LHMWD,
title = {Deep gradient compression: Reducing the communication bandwidth for distributed training},
author = {Lin, Yujun and Han, Song and Mao, Huizi and Wang, Yu and Dally, William J.},
booktitle = {International Conference on Learning Representations},
pages = {},
year = {2018}
}

@conference{LCCH,
title = {sign{SGD} via zeroth-order oracle},
author = {Liu, Sijia and Chen, Pin-Yu and Chen, Xiangyi and Hong, Mingyi},
booktitle = {International Conference on Learning Representations},
pages = {},
year = {2019}
}

@InProceedings{SQSM,
  title = 	 {Error Feedback Fixes {S}ign{SGD} and other Gradient Compression Schemes},
  author = 	 {Karimireddy, Sai Praneeth and Rebjock, Quentin and Stich, Sebastian and Jaggi, Martin},
  booktitle = 	 {Proceedings of the 36th International Conference on Machine Learning},
  pages = 	 {3252--3261},
  year = 	 {2019},
  volume = 	 {97}
}

@conference{MGTR,
title = {Distributed Learning with Compressed Gradient Differences},
author = {Mishchenko, Konstantin and Gorbunov, Eduard and Tak\'a\v{c}, Martin and Richt\'arik, Peter},
booktitle = {arXiv preprint arXiv:1901.09269},
pages = {},
year = {2019}
}

@conference{QRGSLS,
title = {{SGD} with Arbitrary Sampling: General Analysis and Improved Rates},
author = {Qian, Xun and Richt\'arik, Peter and Gower, Robert Mansel and Sailanbayev, Alibek and Loizou, Nicolas and Shulgin, Egor},
booktitle = {International Conference on Machine Learning},
pages = {},
year = {2019}
}

@conference{RKK,
title = {On the convergence of {Adam} and beyond},
author = {Reddi, Sashank and Kale, Satyen and Kumar, Sanjiv},
booktitle = {International Conference on Learning Representations},
pages = {},
year = {2019}
}

@conference{RiBr,
title = {A direct adaptive method for faster backpropagation learning: The {Rprop} algorithm},
author = {Riedmiller, Martin and Braun, Heinrich},
booktitle = {IEEE International Conference on Neural Networks},
pages = {586-591},
year = {1993}
}

@conference{RoMo,
title = {A Stochastic Approximation Method},
author = {Robbins, Herbert and Monro, Sutton},
booktitle = {The Annals of Mathematical Statistics},
pages = {400-407},
volume = {22(3)},
year = {1951}
}

@conference{Sch,
title = {Deep learning in neural networks: An overview},
author = {Schmidhuber, J\"urgen},
booktitle = {Neural networks},
pages = {85–117},
volume = {61},
year = {2015}
}

@conference{SR,
title = {Fast convergence of stochastic gradient descent under a strong growth condition},
author = {Schmidt, Mark and Le Roux, Nicolas},
booktitle = {arXiv preprint arXiv:1308.6370},
pages = {},
year = {2013}
}

@conference{SFDLY,
title = {1-Bit Stochastic Gradient Descent and Application to Data-Parallel Distributed Training of Speech {DNN}s},
author = {Seide, Frank and Fu, Hao and Droppo, Jasha and Li, Gang and Yu, Dong},
booktitle = {Fifteenth Annual Conference of the International Speech Communication Association},
pages = {},
year = {2014}
}

@conference{Shev,
title = {On the absolute constants in the Berry–Esseen type inequalities for identically distributed summands},
author = {Shevtsova, Irina},
booktitle = {arXiv preprint arXiv:1111.6554},
pages = {},
year = {2011}
}

@conference{SRB,
title = {Minimizing finite sums with the stochastic average gradient},
author = {Schmidt, Mark and Roux, Nicolas Le and Bach, Francis},
booktitle = {Mathematical Programming},
pages = {83–112},
volume = {162(1-2)},
year = {2017}
}

@conference{Strom,
title = {Scalable distributed {DNN} training using commodity {GPU} cloud computing},
author = {Strom, Nikko},
booktitle = {Sixteenth Annual Conference of the International Speech Communication Association},
pages = {},
year = {2015}
}

@conference{TiHi,
title = {{RMSprop}},
author = {Tieleman, Tijmen and Hinton, Geoffrey E.},
booktitle = {Coursera: Neural Networks for Machine Learning, Lecture 6.5},
pages = {},
year = {2012}
}

@book{Versh,
place={Cambridge},
series={Cambridge Series in Statistical and Probabilistic Mathematics},
title={High-Dimensional Probability: An Introduction with Applications in Data Science},
DOI={10.1017/9781108231596},
publisher={Cambridge University Press},
author={Vershynin, Roman},
year={2018},
collection={Cambridge Series in Statistical and Probabilistic Mathematics}}

@conference{VBS,
title = {Fast and Faster Convergence of {SGD} for Over-Parameterized Models (and an Accelerated Perceptron)},
author = {Vaswani, Sharan and Bach, Francis and Schmidt, Mark},
booktitle = {Proceedings of the 22nd International Conference on Artificial Intelligence and Statistics, PMLR},
pages = {},
volume = {89},
year = {2019}
}

@conference{WSLCPW,
title = {Atomo: Communication-efficient learning via atomic sparsification},
author = {Wang, Hongyi and Sievert, Scott and Liu, Shengchao and Charles, Zachary and Papailiopoulos, Dimitris and Wright, Stephen},
booktitle = {Advances in Neural Information Processing Systems},
pages = {},
year = {2018}
}

@conference{WXYWWCL,
title = {Terngrad: Ternary gradients to reduce communication in distributed deep learning},
author = {Wen, Wei and Xu, Cong and Yan, Feng and Wu, Chunpeng and Wang, Yandan and Chen, Yiran and Li, Hai},
booktitle = {Advances in Neural Information Processing Systems},
pages = {1509–1519},
year = {2017}
}

@conference{WRSSR,
title = {The marginal value of adaptive gradient methods in machine learning},
author = {Wilson, Ashia and Roelofs, Rebecca and Stern, Mitchell and Srebro, Nati and Recht, Benjamin},
booktitle = {Advances in Neural Information Processing Systems},
pages = {4148-4158},
year = {2017}
}

@conference{ZRSKK,
title = {Adaptive methods for nonconvex optimization},
author = {Zaheer, Manzil and Reddi, Sashank and Sachan, Devendra and Kale, Satyen and Kumar, Sanjiv},
booktitle = {Advances in Neural Information Processing Systems},
pages = {9815-9825},
year = {2018}
}

@conference{Zei,
author = {{Zeiler}, Matthew D.},
title = "{ADADELTA: An Adaptive Learning Rate Method}",
booktitle = {arXiv e-prints, arXiv:1212.5701},
year = "2012"
}

@conference{ZLKALZ,
title = {{ZipML}: Training linear models with end-to-end low precision, and a little bit of deep learning},
author = {Zhang, Hantian and Li, Jerry and Kara, Kaan and Alistarh, Dan and Liu, Ji and Zhang, Ce},
booktitle = {Proceedings of the 34th International Conference on Machine Learning},
pages = {4035–4043},
volume = {70},
year = {2017}
}
\bibliographystyle{plainnat}

\clearpage
\tableofcontents

\clearpage
\appendix
\part*{Appendix}


\section{Additional Experiments}\label{apx:add-exp}

In this section we present several more experiments on the Rosenbrock function for further insights.

Figure \ref{fig:var-lr} experiments with the same setup but variable learning rate.
In Figure \ref{fig:neigh-size}, we investigated the size of the neighborhood with respect to step size.


\begin{figure}[ht]
\vskip 0.2in
\begin{center}
\centerline{\includegraphics[width=0.95\columnwidth]{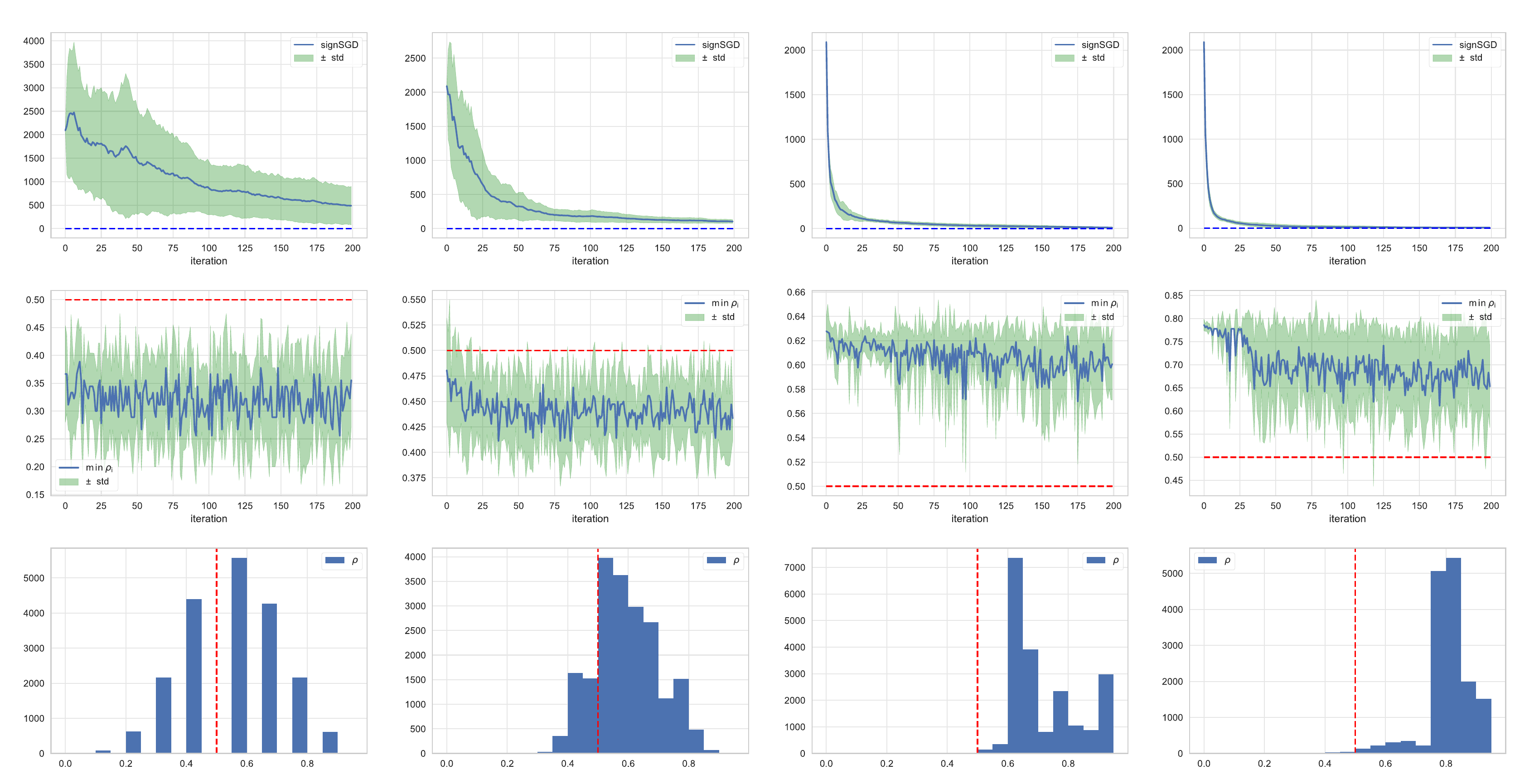}}
\caption{Performance of signSGD with variable step size ($\gamma_0=0.25$) under four different noise levels (mini-batch size 1, 2, 5, 7) using Rosenbrock function. As in the experiments of Figure \ref{fig:const-lr} with constant step size, these plots show the relationship between success probabilities and the convergence rate (\ref{non-convex-rate-min}). In low success probability regime (first and second columns) we observe oscillations, while in high success probability regime (third and forth columns) oscillations are mitigated substantially.}
\label{fig:var-lr}
\end{center}
\vskip -0.2in
\end{figure}

\begin{figure}[ht]
\vskip 0.2in
\begin{center}
\centerline{\includegraphics[width=0.95\columnwidth]{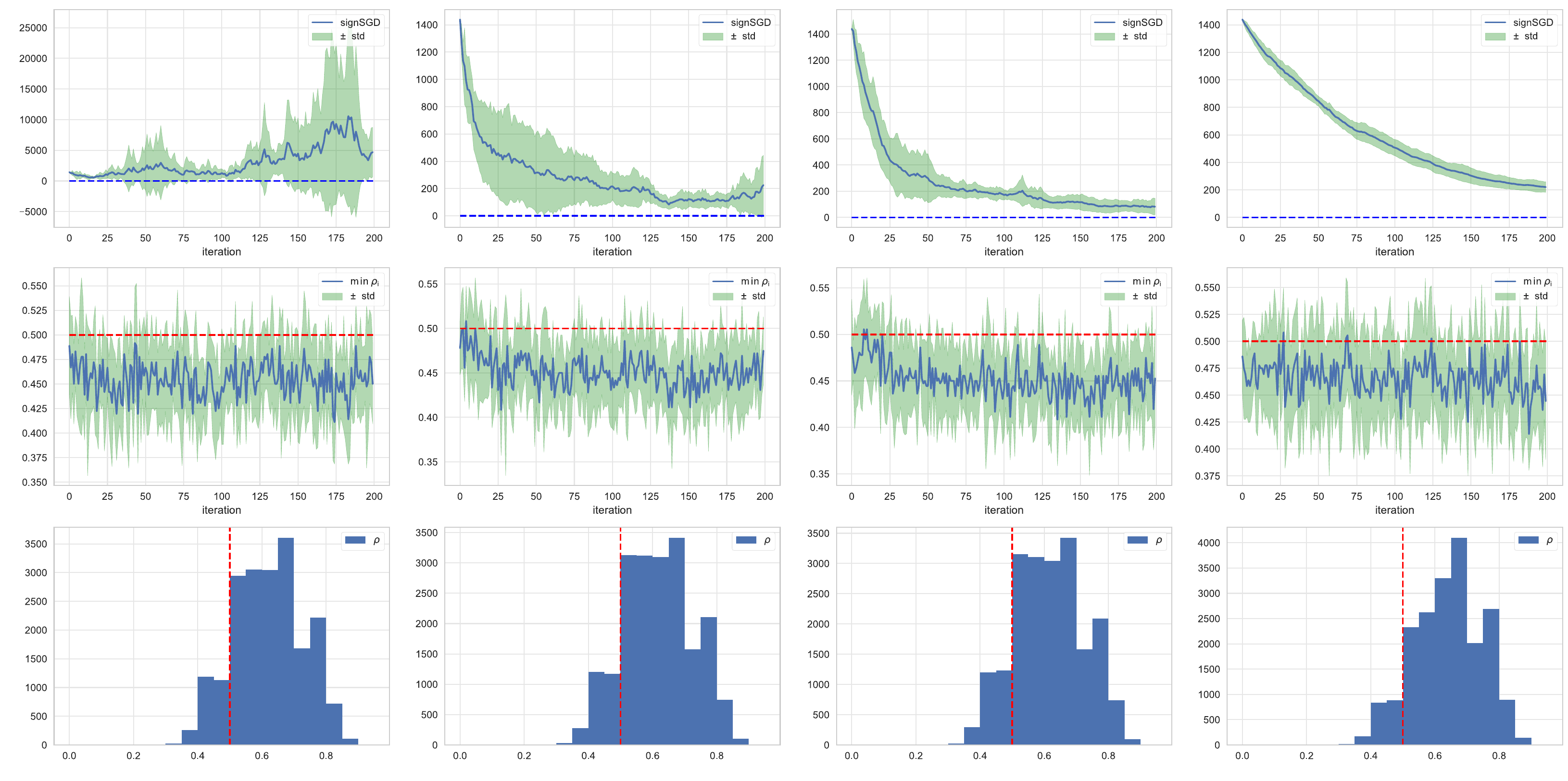}}
\caption{In this part of experiments we investigated convergence rate (\ref{const-step-rate}) to a neighborhood of the solution. We fixed gradient noise level by setting mini-batch size 2 and altered the constant step size. For the first column we set bigger step size $\gamma=0.25$ to detect the divergence (as we slightly violated SPB assumption). Then for the $2^\textrm{nd}$ and $3^\textrm{rd}$ columns we set $\gamma=0.1$ and $\gamma=0.05$ to expose the convergence to a neighborhood. For the forth column we set even smaller step size $\gamma=0.01$ to observe a slower convergence.}
\label{fig:neigh-size}
\end{center}
\vskip -0.2in
\end{figure}

\begin{figure}[ht]
\vskip 0.2in
\begin{center}
\centerline{\includegraphics[width=0.5\columnwidth]{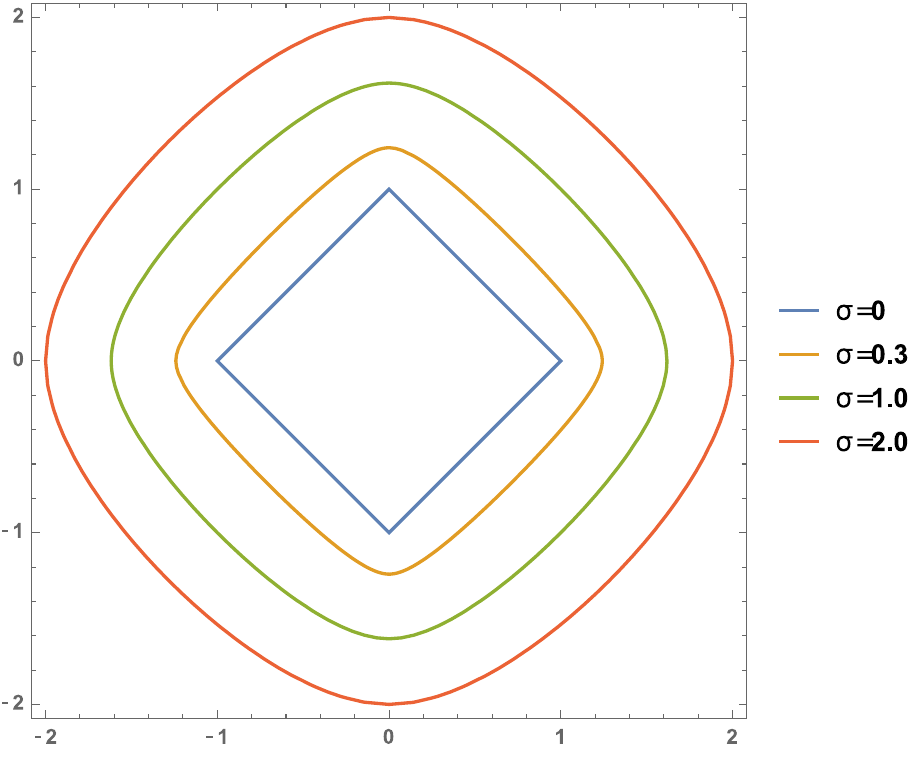}}
\caption{Unit balls in $l^{1,2}$ norm (\ref{def:l122-norm}) with different noise levels.}
\end{center}
\vskip -0.2in
\end{figure}

\clearpage

\section{Proofs}

\subsection{Sufficient conditions for SPB: Proof of Lemma \ref{lemma:main-rho}}\label{apx:lemma-main-rho}

Here we state the well-known Gauss's inequality on unimodal distributions\footnote{see \url{https://en.wikipedia.org/wiki/Gauss\%27s_inequality}}.

\begin{theorem}[Gauss's inequality]
Let $X$ be a unimodal random variable with mode $m$, and let $\sigma_m^2$ be the expected value of $(X-m)^2$. Then for any positive value of $r$,
\begin{equation*}
\Pr(|X-m| > r)
\le
\begin{cases}
\frac{4}{9}\left(\frac{\sigma_m}{r}\right)^2, & \text{ if } r\ge\frac{2}{\sqrt{3}}\sigma_m \\
1 - \frac{1}{\sqrt{3}}\frac{r}{\sigma_m}, & \text{ otherwise}
\end{cases}
\end{equation*}
\end{theorem}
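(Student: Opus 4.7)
The plan is to give the classical proof based on Khinchine's representation of unimodal laws as scale mixtures of uniforms, which converts Gauss's inequality into a one-variable optimization problem. After translating by $m$ I may assume $m=0$, so $\sigma_m^2=\E[X^2]$, and the unimodality of $X$ at $0$ lets me invoke Khinchine's lemma to write $X \stackrel{d}{=} UY$, where $U$ is uniform on $[0,1]$ and is independent of an auxiliary (possibly signed) random variable $Y$. This single representation captures the entire unimodality hypothesis.

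Setting $Z\eqdef|Y|$ and using independence of $U$ and $Y$, I would then establish the two identities
\[
\sigma_m^2 \;=\; \E[X^2] \;=\; \E[U^2]\,\E[Y^2] \;=\; \tfrac{1}{3}\,\E[Z^2],
\qquad
\Pr(|X|>r) \;=\; \E\bigl[(1-r/Z)_+\bigr].
\]
The theorem thus reduces to bounding $\E[(1-r/Z)_+]$ over nonnegative laws with $\E[Z^2]=3\sigma_m^2$. Since the objective is linear in the law of $Z$ under a single moment constraint, an extreme-point argument makes the candidate maximizer a two-point law on $\{0,a\}$ with $a>r$ and $\Pr(Z=a)=p\eqdef 3\sigma_m^2/a^2$, leaving the scalar problem
\[
\max_{a>r,\; p\in[0,1]}\; \Phi(a) \;=\; \frac{3\sigma_m^2}{a^2}\Bigl(1-\frac{r}{a}\Bigr).
\]

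Straightforward calculus identifies the unique interior critical point $a=3r/2$, with induced probability $p=4\sigma_m^2/(3r^2)$ that is feasible exactly when $r\ge \tfrac{2}{\sqrt3}\sigma_m$ and yields $\Phi(3r/2)=\tfrac{4}{9}(\sigma_m/r)^2$, matching the first branch of the claimed bound. When $r<\tfrac{2}{\sqrt3}\sigma_m$, this interior critical point would require $p>1$ and is infeasible, so the constraint $p\le1$ must bind, forcing the extremal $Z$ to be the point mass at $\sqrt3\,\sigma_m$; evaluating $\Phi(\sqrt3\,\sigma_m)=1-r/(\sqrt3\,\sigma_m)$ recovers the second branch. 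Combining the two cases gives exactly the piecewise bound stated in the theorem.

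The main obstacle is making the two ingredients I am treating as black boxes fully rigorous. First, Khinchine's representation for general, not necessarily symmetric, unimodal distributions needs to be verified; this is classical and can be handled by approximating the CDF by piecewise-linear (mixture-of-uniform) CDFs and passing to the limit. Second, the extreme-point step must be justified globally, not only at a stationary point: this can be done by a Lagrangian / weak-duality certificate, namely by exhibiting constants $\lambda,\mu\ge0$ with $(1-r/z)_+\le \lambda z^2+\mu$ for all $z\ge0$ and equality on the two-point support of the candidate, which confirms that the two-point (or point-mass) law is indeed the supremum and not merely a critical configuration.
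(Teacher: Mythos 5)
Your proposal is correct, but the comparison here is one-sided: the paper does not prove Gauss's inequality at all --- it is stated as a classical result with a reference and then immediately applied to $\hat{g}_i(x)$, so you are supplying a proof where the paper supplies none. Your route (Khinchine's representation $X\stackrel{d}{=}UY$, reduction to maximizing $\E[(1-r/Z)_+]$ subject to $\E[Z^2]=3\sigma_m^2$, then a moment-problem optimization) is the standard modern proof, and the computations check out: $\Phi(a)=\tfrac{3\sigma_m^2}{a^2}(1-r/a)$ has its unique critical point at $a=3r/2$ with value $\tfrac{4}{9}(\sigma_m/r)^2$ and induced mass $p=\tfrac{4\sigma_m^2}{3r^2}$, which is feasible exactly when $r\ge\tfrac{2}{\sqrt3}\sigma_m$, and the point mass at $\sqrt3\,\sigma_m$ gives $1-\tfrac{r}{\sqrt3\sigma_m}$ otherwise. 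The one step stated too loosely is the leap to ``a two-point law on $\{0,a\}$'': the extreme points of a moment-constrained set of measures are two-point laws, but not a priori with one atom at $0$. You correctly anticipate this and the Lagrangian certificate you propose does close it: in the first regime take $\lambda=\tfrac{4}{27r^2}$, $\mu=0$ (the gap $\lambda z^2+\mu-(1-r/z)$ is convex on $(0,\infty)$ with a double zero at $z=3r/2$), and in the second take $\lambda=\tfrac{r}{6\sqrt3\,\sigma_m^3}$, $\mu=1-\tfrac{\sqrt3 r}{2\sigma_m}$, where $\mu\ge0$ is precisely the condition $r\le\tfrac{2}{\sqrt3}\sigma_m$. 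Khinchine's theorem for general (not necessarily symmetric) unimodal laws is likewise classical, so both of your declared black boxes are genuinely available. As a practical remark, the paper only ever applies the inequality to unimodal \emph{symmetric} distributions (Lemma~\ref{lemma:main-rho}), for which the mode equals the mean and a shorter symmetric version of the argument would suffice; your proof covers the general statement as written.
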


Applying this inequality on unimodal and symmetric distributions, direct algebraic manipulations give the following bound:
\begin{equation*}
\Pr(|X-\mu| \le r)
\ge
\begin{cases}
1 - \frac{4}{9}\left(\frac{\sigma}{r}\right)^2, & \text{ if } \frac{\sigma}{r}\le\frac{\sqrt{3}}{2} \\
\frac{1}{\sqrt{3}}\frac{r}{\sigma}, & \text{ otherwise}
\end{cases}\,\,
\ge \frac{r/\sigma}{r/\sigma + \sqrt{3}},
\end{equation*}
where $m=\mu$ and $\sigma^2_m=\sigma^2$ are the mean and variance of unimodal, symmetric random variable $X$, and $r\ge 0$. Now, using the assumption that each $\hat{g}_i(x)$ has unimodal and symmetric distribution, we apply this bound for $X = \hat{g}_i(x),\,\mu = g_i(x), \sigma^2 = \sigma_i^2(x)$ and get a bound for success probabilities
\begin{align*}
\begin{split}
\Pr(\sign\hat{g}_i = \sign g_i)
& = 
\begin{cases}
\Pr(\hat{g}_i \ge 0), & \text{ if } g_i > 0 \\
\Pr(\hat{g}_i \le 0), & \text{ if } g_i < 0
\end{cases} \\
& = 
\begin{cases}
\frac{1}{2} + \Pr(0 \le \hat{g}_i \le g_i), & \text{ if } g_i > 0 \\
\frac{1}{2} + \Pr( g_i \le \hat{g}_i \le 0), & \text{ if } g_i < 0
\end{cases} \\
& = 
\begin{cases}
\frac{1}{2} + \frac{1}{2}\Pr(0 \le \hat{g}_i \le 2g_i), & \text{ if } g_i > 0 \\
\frac{1}{2} + \frac{1}{2}\Pr(2g_i \le \hat{g}_i \le 0), & \text{ if } g_i < 0
\end{cases} \\
& =  \frac{1}{2} + \frac{1}{2}\Pr(|\hat{g}_i - g_i| \le |g_i|) \\
&\ge \frac{1}{2} + \frac{1}{2}\frac{|g_i|/\sigma_i}{|g_i|/\sigma_i + \sqrt{3}} \\
& =  \frac{1}{2} + \frac{1}{2}\frac{|g_i|}{|g_i| + \sqrt{3}\sigma_i}
\end{split}
\end{align*}

{\bf Improvement on Lemma \ref{lemma:main-rho} and $l^{1,2}$ norm:} The bound after Gauss inequality can be improved including a second order term
\begin{equation*}
\Pr(|X-\mu| \le r)
\ge
\begin{cases}
1 - \frac{4}{9}\left(\frac{\sigma}{r}\right)^2, & \text{ if } \frac{\sigma}{r}\le\frac{\sqrt{3}}{2} \\
\frac{1}{\sqrt{3}}\frac{r}{\sigma}, & \text{ otherwise}
\end{cases}\,\,
\ge 1 - \frac{1}{1 + \nicefrac{r}{\sqrt{3}\sigma} + (\nicefrac{r}{\sqrt{3}\sigma})^2}.
\end{equation*}

Indeed, letting $z := \nicefrac{r}{\sqrt{3}\sigma} \ge \nicefrac{2}{3}$, we get $1-\frac{4}{9}\frac{1}{3z^2} \ge 1 - \frac{1}{1+z+z^2}$ as it reduces to $23z^2-4z-4\ge0$. Otherwise, if $0\le z\le\nicefrac{2}{3}$, then $z \ge 1 - \frac{1}{1+z+z^2}$ as it reduces to $1\ge 1-z^3$.
The improvement is tighter as
\begin{equation*}
\frac{r/\sigma}{r/\sigma + \sqrt{3}} = 1 - \frac{1}{1 + \nicefrac{r}{\sqrt{3}\sigma}} \le 1 - \frac{1}{1 + \nicefrac{r}{\sqrt{3}\sigma} + (\nicefrac{r}{\sqrt{3}\sigma})^2}.
\end{equation*}

Hence, continuing the proof of Lemma \ref{lemma:main-rho}, we get
\begin{equation*}
\Pr(\sign\hat{g}_i = \sign g_i) \ge  1 - \frac{1}{2}\frac{1}{1 + \nicefrac{|g_i|}{\sqrt{3}\sigma_i} + (\nicefrac{|g_i|}{\sqrt{3}\sigma_i})^2}
\end{equation*}

and we could have defined $l^{1,2}$-norm in a bit more complicated form as
\begin{equation*}
\|g\|_{l^{1,2}} := \sum_{i=1}^d \left(1- \frac{1}{1 + \nicefrac{|g_i|}{\sqrt{3}\sigma_i} + (\nicefrac{|g_i|}{\sqrt{3}\sigma_i})^2} \right)|g_i|.
\end{equation*}

\subsection{Sufficient conditions for SPB: Proof of Lemma \ref{lemma:strict-rho}}\label{apx:lemma-strict-rho}

Let $\hat{g}^{(\tau)}$ be the gradient estimator with mini-batch size $\tau$. It is known that the variance for $\hat{g}^{(\tau)}$ is dropped by at least a factor of $\tau$, i.e.
$$
\E[(\hat{g}_i^{(\tau)} - g_i)^2] \le \frac{\sigma_i^2}{\tau}.
$$

Hence, estimating the failure probabilities of $\sign\hat{g}^{(\tau)}$ when $g_i\neq0$, we have
\begin{align*}
\begin{split}
\Pr(\sign\hat{g}_i^{(\tau)} \neq \sign g_i)
& =   \Pr(|\hat{g}_i^{(\tau)} - g_i| = |\hat{g}_i^{(\tau)}| + |g_i|) \\
& \le \Pr(|\hat{g}_i^{(\tau)} - g_i| \ge |g_i|) \\
& =   \Pr((\hat{g}_i^{(\tau)} - g_i)^2 \ge g_i^2) \\
& \le \frac{\E[(\hat{g}_i^{(\tau)} - g_i)^2]}{g_i^2} \\
& =   \frac{\sigma_i^2}{\tau g_i^2},
\end{split}
\end{align*}
which imples
\begin{equation*}
\rho_i = \Pr(\sign\hat{g}_i = \sign g_i) \ge 1 - \frac{\sigma_i^2}{\tau g_i^2} \ge 1 - \frac{c_i}{\tau}.
\end{equation*}

\subsection{Sufficient conditions for SPB: Proof of Lemma \ref{lemma:SPB-via-minibatch}}\label{apx:lemma-SPB-via-minibatch}

The proof of this lemma is the most technical one. We will split the derivation into three lemmas providing some intuition on the way. The first two lemmas establish success probability bounds in terms of mini-batch size. Essentially, we present two methods: one works well in the case of small randomness, while the other one in the case of non-small randomness. In the third lemma, we combine those two bounds to get the condition on mini-batch size ensuring SPB assumption.

\begin{lemma}\label{lemma:SPB-small-entropy}
Let $X_1, X_2, \dots, X_{\tau}$ be i.i.d. random variables with non-zero mean $\mu := \E X_1 \ne 0$, finite variance $\sigma^2 := \E|X_1-\mu|^2 < \infty$. Then for any mini-batch size $\tau\ge1$
\begin{equation}\label{SPB-small-entropy}
\Pr\left(\sign\left[\frac{1}{\tau}\sum_{i=1}^{\tau}X_i\right] = \sign \mu \right) \ge 1 - \frac{\sigma^2}{\tau \mu^2}.
\end{equation}
\end{lemma}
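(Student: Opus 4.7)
The plan is to reduce the claim to Chebyshev's inequality applied to the sample mean. Let $\bar X_\tau \eqdef \frac{1}{\tau}\sum_{i=1}^\tau X_i$. Since the $X_i$ are i.i.d., we have $\E \bar X_\tau = \mu$ and $\var(\bar X_\tau) = \sigma^2/\tau$, both finite by hypothesis.

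Next, I would relate the failure event $\{\sign \bar X_\tau \ne \sign \mu\}$ to a deviation event of the form $\{|\bar X_\tau - \mu| \ge |\mu|\}$. The key observation is that if $\sign \bar X_\tau \ne \sign \mu$, then either $\bar X_\tau = 0$ (which gives $|\bar X_\tau - \mu| = |\mu|$) or $\bar X_\tau$ has the opposite sign to $\mu$ (which gives $|\bar X_\tau - \mu| = |\bar X_\tau| + |\mu| \ge |\mu|$). In either case the deviation is at least $|\mu|$, so the failure event is contained in $\{|\bar X_\tau - \mu| \ge |\mu|\}$. This mimics exactly the first steps in the proof of Lemma~\ref{lemma:strict-rho}.

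From here, Chebyshev (or just Markov applied to $(\bar X_\tau - \mu)^2$) immediately yields
\[
\Pr\bigl(\sign \bar X_\tau \ne \sign \mu\bigr)
\;\le\; \Pr\bigl(|\bar X_\tau - \mu| \ge |\mu|\bigr)
\;\le\; \frac{\var(\bar X_\tau)}{\mu^2}
\;=\; \frac{\sigma^2}{\tau \mu^2},
\]
and taking complements gives \eqref{SPB-small-entropy}.

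There is really no substantive obstacle: the only subtlety is handling the case $\bar X_\tau = 0$ correctly (so that the containment of events uses $\ge$ rather than a strict $>$), and being careful that when $\mu > 0$ and $\bar X_\tau < 0$ the distance $|\bar X_\tau - \mu|$ decomposes as $|\bar X_\tau| + |\mu|$, which is at least $|\mu|$; symmetrically when $\mu < 0$. Note that $\sigma$ need not be finite in a useful way if $\sigma^2/(\tau\mu^2)\ge 1$, but then the bound \eqref{SPB-small-entropy} is vacuous, so the statement is still correct.
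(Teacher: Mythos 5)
Your proposal is correct and follows essentially the same route as the paper: both reduce the claim to Chebyshev's inequality for the sample mean by observing that a sign mismatch forces $|\bar X_\tau - \mu| \ge |\mu|$ (the paper phrases this as $\Pr(\bar X_\tau > 0) \ge \Pr(|\bar X_\tau - \mu| < \mu)$ after assuming $\mu>0$ without loss of generality, which is the complementary form of your event containment). Your explicit handling of the $\bar X_\tau = 0$ case is a small but welcome point of care that the paper leaves implicit.
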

\begin{proof}
Without loss of generality, we assume $\mu>0$. Then, after some adjustments, the proof follows from the Chebyshev's inequality:
\begin{align*}
\begin{split}
\Pr\left(\sign\left[\frac{1}{\tau}\sum_{i=1}^{\tau}X_i\right] = \sign \mu \right)
&=   \Pr\left(\frac{1}{\tau}\sum_{i=1}^{\tau}X_i > 0 \right) \\
&\ge \Pr\left(\left|\frac{1}{\tau}\sum_{i=1}^{\tau}X_i - \mu\right| < \mu \right) \\
&=   1 - \Pr\left(\left|\frac{1}{\tau}\sum_{i=1}^{\tau}X_i - \mu\right| \ge \mu \right) \\
&\ge 1 - \frac{1}{\mu^2} \var\left[\frac{1}{\tau}\sum_{i=1}^{\tau}X_i\right] \\
&=   1 - \frac{\sigma^2}{\tau \mu^2},
\end{split}
\end{align*}
where in the last step we used independence of random variables $X_1, X_2, \dots, X_{\tau}$.
\end{proof}

Obviously, bound (\ref{SPB-small-entropy}) is not optimal for big variance as it becomes a trivial inequality. In the case of non-small randomness a better bound is achievable additionally assuming the finiteness of 3th central moment.

\begin{lemma}\label{lemma:SPB-nonsmall-entropy}
Let $X_1, X_2, \dots, X_{\tau}$ be i.i.d. random variables with non-zero mean $\mu := \E X_1 \ne 0$, positive variance $\sigma^2 := \E|X_1-\mu|^2 > 0$ and finite 3th central moment $\nu^3 := \E|X_1-\mu|^3<\infty$. Then for any mini-batch size $\tau\ge1$
\begin{equation}\label{SPB-nonsmall-entropy}
\Pr\left(\sign\left[\frac{1}{\tau}\sum_{i=1}^{\tau}X_i\right] = \sign \mu \right) \ge \frac{1}{2}\left(1 + \erf\left(\frac{|\mu|\sqrt{\tau}}{\sqrt{2}\sigma}\right) - \frac{\nu^3}{\sigma^3\sqrt{\tau}}\right),
\end{equation}
where error function $\erf$ is defined as
\begin{equation*}
\erf(x) = \frac{2}{\sqrt{\pi}}\int_0^x e^{-t^2}\,dt, \quad x\in\R.
\end{equation*}
\end{lemma}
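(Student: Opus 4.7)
The desired bound has the exact shape of what one gets from the Berry--Esseen theorem combined with the identity $\Phi(x)=\tfrac{1}{2}(1+\erf(x/\sqrt{2}))$, so my plan is to reduce the claim to an application of Berry--Esseen.

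First I would reduce to the case $\mu>0$ by symmetry: if $\mu<0$, replace each $X_i$ by $-X_i$, which preserves $\sigma$, $\nu$, and flips both the sign of $\mu$ and the sign of $\bar X_\tau := \tfrac{1}{\tau}\sum X_i$. So it suffices to show
\begin{equation*}
\Pr(\bar X_\tau > 0) \;\ge\; \tfrac{1}{2}\!\left(1 + \erf\!\left(\tfrac{\mu\sqrt{\tau}}{\sqrt{2}\,\sigma}\right)\right) - \tfrac{1}{2}\cdot\tfrac{\nu^3}{\sigma^3\sqrt{\tau}}.
\end{equation*}
Standardize by $S_\tau := \tfrac{\sqrt{\tau}(\bar X_\tau - \mu)}{\sigma}$, so $\{\bar X_\tau > 0\} = \{S_\tau > -\mu\sqrt{\tau}/\sigma\}$, and therefore $\Pr(\bar X_\tau > 0) = 1 - F_{S_\tau}(-\mu\sqrt{\tau}/\sigma)$ (treating the boundary point as negligible; if $S_\tau$ has atoms this only helps the inequality).

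Next I would invoke the Berry--Esseen inequality for i.i.d.\ sums: for every $x\in\mathbb{R}$,
\begin{equation*}
\bigl|F_{S_\tau}(x) - \Phi(x)\bigr| \;\le\; \frac{C\,\nu^3}{\sigma^3\sqrt{\tau}},
\end{equation*}
where $\Phi$ is the standard normal CDF. The sharpest known universal constant (Shevtsova) satisfies $C \le 0.4748 < \tfrac{1}{2}$, which is exactly the constant demanded by the statement. Applying this at $x = -\mu\sqrt{\tau}/\sigma$ gives
\begin{equation*}
\Pr(\bar X_\tau > 0) \;\ge\; 1 - \Phi\!\left(-\tfrac{\mu\sqrt{\tau}}{\sigma}\right) - \tfrac{1}{2}\cdot\tfrac{\nu^3}{\sigma^3\sqrt{\tau}} = \Phi\!\left(\tfrac{\mu\sqrt{\tau}}{\sigma}\right) - \tfrac{1}{2}\cdot\tfrac{\nu^3}{\sigma^3\sqrt{\tau}}.
\end{equation*}

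Finally I would rewrite the normal CDF via the error function using $\Phi(x) = \tfrac{1}{2}\bigl(1+\erf(x/\sqrt{2})\bigr)$, giving exactly
\begin{equation*}
\Pr(\bar X_\tau > 0) \;\ge\; \tfrac{1}{2}\!\left(1 + \erf\!\left(\tfrac{\mu\sqrt{\tau}}{\sqrt{2}\,\sigma}\right) - \tfrac{\nu^3}{\sigma^3\sqrt{\tau}}\right),
\end{equation*}
which matches the claimed bound after reinstating $|\mu|$. The only real subtlety is citing the right Berry--Esseen constant; everything else is a standardization and a change of notation. If one wanted a self-contained argument avoiding the modern sharp constant, one could instead use Esseen's original proof with a slightly worse absolute constant and then absorb it by adjusting the statement, but since $\tfrac{1}{2}$ is above the known sharp value, directly quoting Berry--Esseen suffices.
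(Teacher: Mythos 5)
Your proposal is correct and follows essentially the same route as the paper: both apply the Berry--Esseen theorem (with the Shevtsova-type constant $\le \tfrac12$) at the point $t=-\mu\sqrt{\tau}/\sigma$ after reducing to $\mu>0$, and then rewrite the standard normal tail probability in terms of $\erf$. The only cosmetic difference is that the paper states Berry--Esseen for the survival function rather than the CDF, which is equivalent.
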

\begin{proof}
Again, without loss of generality, we may assume that $\mu>0$. Informally, the proof goes as follows. As we have an average of i.i.d. random variables, we approximate it (in the sense of distribution) by normal distribution using the Central Limit Theorem (CLT). Then we compute success probabilities for normal distribution with the error function $\erf$. Finally, we take into account the approximation error in CLT, from which the third term with negative sign appears. More formally, we apply Berry–Esseen inequality\footnote{see \url{https://en.wikipedia.org/wiki/Berry-Esseen_theorem}} on the rate of approximation in CLT \citep{Shev}:
\begin{equation*}
\left| \Pr\left(\frac{1}{\sigma\sqrt{\tau}}\sum_{i=1}^\tau (X_i-\mu) > t\right) - \Pr\left(N>t\right) \right| \le \frac{1}{2}\frac{\nu^3}{\sigma^3\sqrt{\tau}}, \quad t\in\R,
\end{equation*}
where $N\sim\mathcal{N}(0,1)$ has the standard normal distribution. Setting $t = - \mu\sqrt{\tau}/\sigma$, we get
\begin{equation}\label{CLT-BE}
\left| \Pr\left(\frac{1}{\tau}\sum_{i=1}^\tau X_i > 0\right) - \Pr\left(N > -\frac{\mu\sqrt{\tau}}{\sigma}\right) \right| \le \frac{1}{2}\frac{\nu^3}{\sigma^3\sqrt{\tau}}.
\end{equation}

It remains to compute the second probability using the cumulative distribution function of normal distribuition and express it in terms of the error function:
\begin{align*}
\begin{split}
\Pr\left(\sign\left[\frac{1}{\tau}\sum_{i=1}^{\tau}X_i\right] = \sign \mu \right)
&=   \Pr\left(\frac{1}{\tau}\sum_{i=1}^{\tau}X_i > 0 \right) \\
& \overset{(\ref{CLT-BE})}{\ge} \Pr\left(N > -\frac{\mu\sqrt{\tau}}{\sigma}\right) - \frac{1}{2}\frac{\nu^3}{\sigma^3\sqrt{\tau}} \\
&=   \frac{1}{\sqrt{2\pi}} \int_{-\mu\sqrt{\tau}/\sigma}^{\infty} e^{-t^2/2}\,dt - \frac{1}{2}\frac{\nu^3}{\sigma^3\sqrt{\tau}} \\
&=   \frac{1}{2}\left(1 + \sqrt{\frac{2}{\pi}} \int_{0}^{\mu\sqrt{\tau}/\sigma} e^{-t^2/2}\,dt - \frac{\nu^3}{\sigma^3\sqrt{\tau}}\right) \\
&=   \frac{1}{2}\left(1 + \erf\left(\frac{\mu\sqrt{\tau}}{\sqrt{2}\sigma}\right) - \frac{\nu^3}{\sigma^3\sqrt{\tau}}\right).
\end{split}
\end{align*}
\end{proof}

Clearly, bound (\ref{SPB-nonsmall-entropy}) is better than (\ref{SPB-small-entropy}) when randomness is high. On the other hand, bound (\ref{SPB-nonsmall-entropy}) is not optimal for small randomness ($\sigma\approx 0$). Indeed, one can show that in a small randomness regime, while both variance $\sigma^2$ and third moment $\nu^3$ are small, the ration $\nu/\sigma$ might blow up to infinity producing trivial inequality. For instance, taking $X_i\sim\textrm{Bernoulli}(p)$ and letting $p\to1$ gives $\nu/\sigma = O\left((1-p)^{-\nicefrac{1}{6}}\right)$. This behaviour stems from the fact that we are using CLT: less randomness implies slower rate of approximation in CLT.

As a result of these two bounds on success probabilities, we conclude a condition on mini-batch size for the SPB assumption  to hold.
\begin{lemma}\label{lemma:spb-via-minibatch}
Let $X_1, X_2, \dots, X_{\tau}$ be i.i.d. random variables with non-zero mean $\mu\ne 0$ and finite variance $\sigma^2 < \infty$. Then
\begin{equation}\label{SPB-via-minibatch}
\Pr\left(\sign\left[\frac{1}{\tau}\sum_{i=1}^{\tau}X_i\right] = \sign \mu \right) > \frac{1}{2}, \quad\text{if}\quad
\tau > 2\min \left( \frac{\sigma^2}{\mu^2}, \frac{\nu^3}{|\mu|\sigma^2}\right),
\end{equation}
where $\nu^3$ is (possibly infinite) 3th central moment.
\end{lemma}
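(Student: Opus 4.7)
The plan is to deduce the claim by combining the two probability bounds already established in Lemmas~\ref{lemma:SPB-small-entropy} and~\ref{lemma:SPB-nonsmall-entropy}. The hypothesis $\tau > 2\min(\sigma^2/\mu^2,\, \nu^3/(|\mu|\sigma^2))$ is logically equivalent to the disjunction ``$\tau > 2\sigma^2/\mu^2$ or $\tau > 2\nu^3/(|\mu|\sigma^2)$'', so the natural strategy is a two-case split: use the Chebyshev-type bound in the small-randomness regime and the CLT/Berry--Esseen-type bound in the non-small-randomness regime.

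In the first case, $\tau > 2\sigma^2/\mu^2$ plugged into Lemma~\ref{lemma:SPB-small-entropy} immediately gives $\Pr(\sign\tfrac{1}{\tau}\sum_i X_i = \sign\mu) \ge 1 - \sigma^2/(\tau\mu^2) > \tfrac{1}{2}$, and there is nothing further to prove. So the substantive work lies in the second case, where we assume $\tau > 2\nu^3/(|\mu|\sigma^2)$. By Lemma~\ref{lemma:SPB-nonsmall-entropy}, it suffices to establish the strict inequality $\erf(x) > y$, where $x := |\mu|\sqrt{\tau}/(\sqrt{2}\sigma)$ and $y := \nu^3/(\sigma^3\sqrt{\tau})$. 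Rearranging the hypothesis $\tau |\mu|\sigma^2 > 2\nu^3$ into the form $\nu^3/(\sigma^3\sqrt{\tau}) < |\mu|\sqrt{\tau}/(2\sigma)$ yields the convenient identity $y < x/\sqrt{2}$, so the task reduces to proving $\erf(x) \ge x/\sqrt{2}$ in the range of $x$ of interest.

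Here the key technical observation enters: the inequality $\erf(x) \ge x/\sqrt{2}$ fails for large $x$ (the right-hand side eventually exceeds $1$), so one cannot hope for this to hold for every $\tau$ admitted by the second case. The trick is to notice that if additionally $\tau > 2\sigma^2/\mu^2$ then the first case already handles it; hence one may assume $\tau \le 2\sigma^2/\mu^2$, which translates exactly into $x \in (0, 1]$. On this interval $\erf$ is strictly concave (since $\erf''(x) = -(4x/\sqrt{\pi})e^{-x^2} < 0$ for $x > 0$), so the secant inequality gives $\erf(x) \ge x\,\erf(1)$. Combined with the numerical fact $\erf(1) \approx 0.8427 > 1/\sqrt{2} \approx 0.7071$, this yields $\erf(x) > x/\sqrt{2} > y$, closing the second case and the lemma.

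The main obstacle is precisely this subtlety in the second case: the clean algebraic reduction $y < x/\sqrt{2}$ is not enough on its own, because $\erf$ saturates while its linear surrogate does not. Realising that the ``bad'' range $x > 1$ is automatically absorbed by the first case, and then exploiting concavity of $\erf$ on $[0,1]$ to upgrade the crude linear comparison into a strict inequality, is the central point; the remaining manipulations are routine rearrangements of the two previously-established bounds.
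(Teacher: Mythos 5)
Your proof is correct and takes essentially the same route as the paper: a two-case split according to which term attains the minimum, Chebyshev (Lemma~\ref{lemma:SPB-small-entropy}) in the first case, and in the second case the observation that one may assume $\tau \le 2\sigma^2/\mu^2$ so that $x = |\mu|\sqrt{\tau}/(\sqrt{2}\sigma) \le 1$, followed by the concavity bound $\erf(x)\ge \erf(1)\,x$ and the fact $\erf(1) > \nicefrac{1}{\sqrt{2}}$. The only (cosmetic) difference is that the paper establishes $\erf(1)>\nicefrac{1}{\sqrt{2}}$ analytically via a convexity estimate yielding $\erf(1)>\nicefrac{7}{9}$, whereas you quote the numerical value $\erf(1)\approx 0.8427$.
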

\begin{proof}
First, if $\sigma = 0$ then the lemma holds trivially. If $\nu = \infty$, then it follows immediately from Lemma \ref{lemma:SPB-small-entropy}. Assume both $\sigma$ and $\nu$ are positive and finite.

In case of $\tau > 2 \sigma^2/\mu^2$ we apply Lemma \ref{lemma:SPB-small-entropy} again. Consider the case $\tau \le 2\sigma^2/\mu^2$, which implies $\frac{\mu\sqrt{\tau}}{\sqrt{2}\sigma} \le 1$. It is easy to check that $\erf(x)$ is concave on $[0,1]$ (in fact on $[0, \infty)$), therefore $\erf(x) \ge \erf(1)x$ for any $x\in[0,1]$. Setting $x = \frac{\mu\sqrt{\tau}}{\sqrt{2}\sigma}$ we get
\begin{equation*}
\erf\left(\frac{\mu\sqrt{\tau}}{\sqrt{2}\sigma}\right) \ge \frac{\erf(1)}{\sqrt{2}}\frac{\mu\sqrt{\tau}}{\sigma},
\end{equation*}
which together with (\ref{SPB-nonsmall-entropy}) gives
\begin{equation*}
\Pr\left(\sign\left[\frac{1}{\tau}\sum_{i=1}^{\tau}X_i\right] = \sign \mu \right) \ge \frac{1}{2}\left(1 + \frac{\erf(1)}{\sqrt{2}}\frac{\mu\sqrt{\tau}}{\sigma} - \frac{\nu^3}{\sigma^3\sqrt{\tau}}\right).
\end{equation*}
Hence, SPB assumption holds if $$\tau > \frac{\sqrt{2}}{\erf(1)} \frac{\nu^3}{\mu\sigma^2}.$$ It remains to show that $\erf(1)>\nicefrac{1}{\sqrt{2}}$. Convexity of $e^x$ on $x\in[-1,0]$ implies $e^x \ge 1 + (1-\nicefrac{1}{e})x$ for any $x\in[-1,0]$. Therefore
\begin{align*}
\begin{split}
\erf(1)
&=   \frac{2}{\sqrt{\pi}}\int_0^1 e^{-t^2}\,dt \\
&\ge \frac{2}{\sqrt{\pi}}\int_0^1 \left(1 - (1-\nicefrac{1}{e})t^2\right)\,dt \\
&=   \frac{2}{\sqrt{\pi}}\left(\frac{2}{3} + \frac{1}{3e}\right) > \frac{2}{\sqrt{4}}\left(\frac{2}{3} + \frac{1}{3\cdot 3}\right) = \frac{7}{9} > \frac{1}{\sqrt{2}}.
\end{split}
\end{align*}
\end{proof}

Lemma (\ref{lemma:SPB-via-minibatch}) follows from Lemma (\ref{lemma:spb-via-minibatch}) applying it to i.i.d. data $\hat{g}_i^1(x), \hat{g}_i^2(x), \dots, \hat{g}_i^M(x)$.

\subsection{Sufficient conditions for SPB: Proof of Lemma \ref{lemma:SPB-via-entropy}}\label{apx:lemma-SPB-via-entropy}

This observation is followed by the fact that for continuous random variables, the Gaussian distribution has the maximum differential entropy for a given variance\footnote{see \url{https://en.wikipedia.org/wiki/Differential_entropy} or \url{https://en.wikipedia.org/wiki/Normal_distribution\#Maximum_entropy} }. Formally, let $p_G(x)$ be the probability density function (PDF) of a Gaussian random variable with variance $\sigma^2$ and $p(x)$ be the PDF of some random variable with the same variance. Then $H(p) \le H(p_G)$, where
$$
H(p) = - \int_{\R} p(x)\log p(x)\,dx
$$
is the differential entropy of probability distribution $p(x)$ or alternatively differential entropy of random variable with PDF $p(x)$. Differential entropy for normal distribution can be expressed analytically by $H(p_G) = \frac{1}{2}\log(2\pi e \sigma^2)$. Therefore
$$
H(p) \le \frac{1}{2}\log(2\pi e \sigma^2)
$$
for any distribution $p(x)$ with variance $\sigma^2$. Now, under the bounded variance assumption $\E\left[ |\hat{g} - g|^2 \right] \le C$ (where $g$ is the expected value of $\hat{g}$) we have the entropy of random variable $\hat{g}$ bounded by $\frac{1}{2}\log(2\pi e C)$. However, under the SPB assumption $\Pr\left(\sign\hat{g} = \sign g \right) > \nicefrac{1}{2}$ the entropy is unbounded. In order to prove this, it is enough to notice that under SPB assumption random variable $\hat{g}$ could be any Gaussian random variable with mean $g\ne0$. In other words, SPB assumption holds for any Gaussian random variable with non-zero mean. Hence the entropy could be arbitrarily large as there is no restriction on the variance.

\subsection{Convergence analysis for $M=1$: Proof of Theorem \ref{non-convex-theorem}}\label{apx:non-convex-theorem}

Basically, the analysis follows the standard steps used to analyze SGD for non-convex objectives, except the part  (\ref{proof-crossroad-start})--(\ref{proof-crossroad-end}) where inner product $\E[\langle g_k, \sign\hat{g}_k\rangle]$ needs to be estimated. This is exactly the place when stochastic gradient estimator $\sign\hat{g}_k$ interacts with the true gradient $g_k$. In case of standard SGD, we use estimator $\hat{g}_k$ and the mentioned inner product yields $\|g_k\|^2$, which is then used to measure the progress of the method. In our case, we show that
$$
\E[\langle g_k, \sign\hat{g}_k\rangle] = \|g_k\|_{\rho},
$$
with the $\rho$-norm defined in Definition \ref{def:rho-norm}.

Now we present the proof in more details. First, from $L$-smoothness assumption we have
\begin{align*}
\begin{split}
f(x_{k+1})
&= f(x_k - \gamma_k \sign\hat{g}_k) \\
&\le f(x_k) - \langle g_k, \gamma_k\sign\hat{g}_k\rangle + \sum_{i=1}^d \frac{L_i}{2}(\gamma_k\sign\hat{g}_{k,i})^2 \\
&=   f(x_k) - \gamma_k\langle g_k, \sign\hat{g}_k\rangle + \frac{d\bar{L}}{2}\gamma_k^2,
\end{split}
\end{align*}
where $g_k = g(x_k),\, \hat{g}_k = \hat{g}(x_k)$, $\hat{g}_{k,i}$ is the $i$-th component of $\hat{g}_k$ and $\bar{L}$ is the average value of $L_i$'s. Taking conditional expectation given current iteration $x_k$ gives
\begin{equation}\label{1-1}
\E[f(x_{k+1}) | x_k] \le f(x_k) - \gamma_k\E[\langle g_k, \sign\hat{g}_k\rangle] + \frac{d\bar{L}}{2}\gamma_k^2.
\end{equation}
Using the definition of success probabilities $\rho_i$ we get
\begin{align}
\E[\langle g_k, \sign\hat{g}_k\rangle] \label{proof-crossroad-start}
&= \langle g_k, \E[\sign\hat{g}_k]\rangle \\
&= \sum_{i=1}^d g_{k,i} \cdot \E[\sign\hat{g}_{k,i}]
= \sum_{\substack{1\le i\le d \\ g_{k,i}\ne0}} g_{k,i} \cdot \E[\sign\hat{g}_{k,i}] \\
&= \sum_{\substack{1\le i\le d \\ g_{k,i}\ne0}} g_{k,i} \left(\rho_i(x_k)\sign g_{k,i} + (1-\rho_i(x_k))(-\sign g_{k,i}) \right) \\
&= \sum_{\substack{1\le i\le d \\ g_{k,i}\ne0}} (2\rho_i(x_k) - 1)|g_{k,i}| = \sum_{i=1}^d (2\rho_i(x_k) - 1)|g_{k,i}| = \|g_k\|_\rho. \label{proof-crossroad-end}
\end{align}

Plugging this into (\ref{1-1}) and taking full expectation, we get
\begin{equation}\label{1-2}
\E\|g_k\|_\rho \le \frac{\E[f(x_k)]-\E[f(x_{k+1})]}{\gamma_k} + \frac{d\bar{L}}{2}\gamma_k.
\end{equation}
Therefore
\begin{equation}\label{1-3}
\sum_{k=0}^{K-1} \gamma_k \E\|g_k\|_\rho \le (f(x_0) - f^*) + \frac{d\bar{L}}{2}\sum_{k=0}^{K-1}\gamma_k^2.
\end{equation}
Now, in case of decreasing step sizes $\gamma_k = \gamma_0/\sqrt{k+1}$
\begin{align*}
\min_{0\le k < K}\E\|g_k\|_\rho
&\le \sum_{k=0}^{K-1} \frac{\gamma_0}{\sqrt{k+1}} \E\|g_k\|_\rho \bigg/ \sum_{k=0}^{K-1} \frac{\gamma_0}{\sqrt{k+1}} \\
&\le \frac{1}{\sqrt{K}} \left[\frac{f(x_0) - f^*}{\gamma_0} + \frac{d\bar{L}}{2}\gamma_0\sum_{k=0}^{K-1}\frac{1}{k+1}\right] \\
&\le \frac{1}{\sqrt{K}} \left[\frac{f(x_0) - f^*}{\gamma_0} + \gamma_0 d\bar{L} + \frac{\gamma_0 d\bar{L}}{2}\log{K}\right] \\
& =  \frac{1}{\sqrt{K}} \left[\frac{f(x_0) - f^*}{\gamma_0} + \gamma_0 d\bar{L}\right] + \frac{\gamma_0 d\bar{L}}{2}\frac{\log{K}}{\sqrt{K}}.
\end{align*}
where we have used the following standard inequalities
\begin{equation}\label{0-1}
\sum_{k=1}^K \frac{1}{\sqrt{k}} \ge \sqrt{K}, \quad \sum_{k=1}^K \frac{1}{k} \le 2 + \log{K}.
\end{equation}

In the case of constant step size $\gamma_k = \gamma$
\begin{equation*}
\frac{1}{K}\sum_{k=0}^{K-1}\E\|g_k\|_\rho \le \frac{1}{\gamma K}\left[ (f(x_0) - f^*) + \frac{d\bar{L}}{2}\gamma^2 K \right] = \frac{f(x_0) - f^*}{\gamma K} + \frac{d\bar{L}}{2}\gamma.
\end{equation*}

\subsection{Convergence analysis for $M=1$: Proof of Theorem \ref{non-convex-theorem-2}}\label{apx:non-convex-theorem-2}

Clearly, the iterations $\{x_k\}_{k\ge 0}$ of Algorithm \ref{alg:signSGD} with Option 2 do not increase the function value in any iteration, i.e. $\E[f(x_{k+1})|x_k] \le f(x_k)$. Continuing the proof of Theorem \ref{non-convex-theorem} from (\ref{1-2}), we get
\begin{align*}
\frac{1}{K}\sum_{k=0}^{K-1} \E\|g_k\|_\rho
&\le \frac{1}{K}\sum_{k=0}^{K-1} \frac{\E[f(x_k)]-\E[f(x_{k+1})]}{\gamma_k} + \frac{d\bar{L}}{2}\gamma_k \\
& =  \frac{1}{K}\sum_{k=0}^{K-1} \frac{\E[f(x_k)]-\E[f(x_{k+1})]}{\gamma_0} \sqrt{k+1} + \frac{d\bar{L}}{2K}\sum_{k=0}^{K-1}\frac{\gamma_0}{\sqrt{k+1}} \\
&\le \frac{1}{\sqrt{K}}\sum_{k=0}^{K-1} \frac{\E[f(x_k)]-\E[f(x_{k+1})]}{\gamma_0} + \frac{\gamma_0 d\bar{L}}{\sqrt{K}} \\
& =  \frac{f(x_0)-\E[f(x_K)]}{\gamma_0\sqrt{K}} + \frac{\gamma_0 d\bar{L}}{\sqrt{K}} \\
&\le \frac{1}{\sqrt{K}} \left[\frac{f(x_0)-f^*}{\gamma_0} + \gamma_0 d\bar{L}\right],
\end{align*}
where we have used the following inequality
\begin{equation*}
\sum_{k=1}^K \frac{1}{\sqrt{k}} \le 2\sqrt{K}.
\end{equation*}
The proof for constant step size is the same as in Theorem \ref{non-convex-theorem}.

\subsection{Convergence analysis in parallel setting: Proof of Theorem \ref{non-convex-theorem-d}}\label{apx:non-convex-theorem-d}

First, denote by $I(p;a,b)$ the regularized incomplete beta function, which is defined as follows
\begin{equation}\label{def:ri-beta}
I(p;a,b) = \frac{B(p;a,b)}{B(a,b)} = \frac{\int_0^p t^{a-1}(1-t)^{b-1}\,dt}{\int_0^1 t^{a-1}(1-t)^{b-1}\,dt}, \quad a,b > 0,\, p\in[0,1].
\end{equation}

The proof of Theorem \ref{non-convex-theorem-d} goes with the same steps as in Theorem \ref{non-convex-theorem}, except the derivation (\ref{proof-crossroad-start})--(\ref{proof-crossroad-end}) is replaced by
\begin{align*}
\E[\langle g_k, \sign\hat{g}_k^{(M)}\rangle]
&= \langle g_k, \E[\sign\hat{g}_k^{(M)}]\rangle \\
&= \sum_{i=1}^d g_{k,i} \cdot \E[\sign\hat{g}_{k,i}^{(M)}] \\
&= \sum_{\substack{1\le i\le d \\ g_{k,i}\ne0}} |g_{k,i}| \cdot \E\left[\sign\left(\hat{g}^{(M)}_{k,i}\cdot g_{k,i}\right)\right] \\
&= \sum_{\substack{1\le i\le d \\ g_{k,i}\ne0}} |g_{k,i}| \left(2 I(\rho_i(x_k); l, l) - 1\right) = \|g_k\|_{\rho_M},
\end{align*}
where we have used the following lemma.

\begin{lemma}
Assume that for some point $x\in\R^d$ and some coordinate $i\in\{1, 2, \dots, d\}$, master node receives $M$ independent stochastic signs $\sign\hat{g}_i^n(x),\,n=1,\dots,M$ of true gradient $g_i(x)\ne0$. Let $\hat{g}^{(M)}(x)$ be the sum of stochastic signs aggregated from nodes:
\begin{equation*}
\hat{g}^{(M)} = \sum_{n=1}^M\sign\hat{g}^n.
\end{equation*}
Then
\begin{equation}\label{master-sign-exp}
\E\left[\sign\left(\hat{g}^{(M)}_i\cdot g_i\right)\right]  = 2 I(\rho_i; l, l) - 1,
\end{equation}
where $l = \lfloor\tfrac{M+1}{2}\rfloor$ and $\rho_i>\nicefrac{1}{2}$ is the success probablity for coordinate $i$.
\end{lemma}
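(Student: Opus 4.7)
The plan is to realize $\E[\sign(\hat g^{(M)}_i\cdot g_i)]$ as the expected sign of a centered binomial and then match the resulting tail probability with the regularized incomplete beta function. For each node $m$, define the indicator $Y_m := \mathbf{1}[\sign \hat g^m_i = \sign g_i]$, so that $Y_1,\dots,Y_M$ are i.i.d.\ Bernoulli with parameter $\rho_i$ by the independence assumption on $\hat g^m$ and the definition of the success probability. Writing $\sign \hat g^m_i = \sign(g_i)\,(2Y_m-1)$ gives $\hat g^{(M)}_i = \sign(g_i)\,(2S-M)$ with $S := \sum_{m=1}^M Y_m \sim \mathrm{Bin}(M,\rho_i)$, and since $g_i\neq 0$,
$$\E[\sign(\hat g^{(M)}_i\cdot g_i)] \;=\; \Pr(S > M/2) - \Pr(S < M/2).$$

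For the odd case $M=2l-1$ the parity forbids $2S=M$, so the right-hand side collapses to $2\Pr(S\ge l)-1$. Invoking the standard binomial--beta identity $\Pr(\mathrm{Bin}(n,p)\ge k)=I(p;k,n-k+1)$ with $n=2l-1$, $k=l$ immediately yields $2I(\rho_i;l,l)-1$, which is \eqref{master-sign-exp}.

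For the even case $M=2l$ the tie $S=l$ has strictly positive probability and contributes $0$ to the expectation, so the situation is not structurally identical to the odd one. The plan is to couple the two cases by peeling off the last vote: write $S=S'+Y_M$ with $S'\sim\mathrm{Bin}(2l-1,\rho_i)$ independent of $Y_M$, condition on $Y_M$, and expand. A short bookkeeping computation gives
$$\Pr(S\ge l+1) - \Pr(S\le l-1) \;=\; \bigl[2\Pr(S'\ge l)-1\bigr] \;+\; \rho_i\,\Pr(S'=l-1)\;-\;(1-\rho_i)\,\Pr(S'=l),$$
and the correction terms cancel by the midpoint symmetry $\binom{2l-1}{l-1}=\binom{2l-1}{l}$, which forces $\rho_i\Pr(S'=l-1)=(1-\rho_i)\Pr(S'=l)$. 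Applying the odd case to $S'$ then recovers the same value $2I(\rho_i;l,l)-1$, which also formalizes the ``$2l-1$ vs.\ $2l$ nodes'' remark preceding the lemma.

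The main obstacle is this even case. A naive attempt to replace $\Pr(S\ge l+1)-\Pr(S\le l-1)$ by $2\Pr(S\ge l)-1$ loses the tie mass $\Pr(S=l)$, and one must verify that the two residual terms coming from ``$+1$ breaks a tied odd vote'' and ``$-1$ breaks it'' cancel exactly rather than just approximately. This cancellation is not forced by any obvious sign symmetry of the problem; it is a direct consequence of the specific midpoint identity $\binom{2l-1}{l-1}=\binom{2l-1}{l}$, so the substantive content of the proof is isolated to this one combinatorial fact, with the rest being translation between binomial CDFs and the regularized incomplete beta function $I(\,\cdot\,;l,l)$.
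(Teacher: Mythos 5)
Your proof is correct, and its skeleton (representing the vote count as a $\mathrm{Binomial}(M,\rho_i)$ variable $S$, writing the expectation as $\Pr(S>M/2)-\Pr(S<M/2)$, and invoking $\Pr(\mathrm{Bin}(n,p)\ge k)=I(p;k,n-k+1)$) matches the paper's argument; the odd case is essentially identical. Where you genuinely diverge is the even case $M=2l$: the paper evaluates $\Pr(S\ge l+1)-\Pr(S\le l-1)=I(\rho_i;l+1,l)+I(\rho_i;l,l+1)-1$ and then appeals to the incomplete-beta recurrences, which give $I(p;l+1,l)+I(p;l,l+1)=2I(p;l,l)$ because the two correction terms $\pm\frac{p^l(1-p)^l}{lB(l,l)}$ cancel. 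You instead peel off one vote, reduce to the odd case for $S'\sim\mathrm{Bin}(2l-1,\rho_i)$, and cancel the residual terms via $\rho_i\Pr(S'=l-1)=(1-\rho_i)\Pr(S'=l)$, which follows from $\binom{2l-1}{l-1}=\binom{2l-1}{l}$; I checked the bookkeeping and it is exact. The two cancellations are of course the same combinatorial fact in different clothing, but your route is more elementary (no beta-function identities needed beyond the CDF representation) and has the added benefit of directly exhibiting the equality $\E[\sign(\hat g^{(2l)}_i\cdot g_i)]=\E[\sign(\hat g^{(2l-1)}_i\cdot g_i)]$ that the paper states as a separate remark. One shared implicit assumption worth flagging: both you and the paper treat each $\sign\hat g^m_i$ as taking values in $\{\pm1\}$ only (your identity $\sign\hat g^m_i=\sign(g_i)(2Y_m-1)$ fails if a node reports a zero sign), which is harmless under the paper's standing convention that $\sign\hat g_i\ne0$ almost surely.
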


\begin{proof}
Denote by $S_i^n$ the Bernoulli trial of node $n$ corresponding to $i$th coordinate, where ``success'' is the sign match between stochastic gradient and gradient:
\begin{equation}\label{def:Sim}
S^n_i :=
\begin{cases}
        1, & \text{if } \sign\hat{g}^n_i = \sign g_i\\
        0, & \text{otherwise}
\end{cases}
\sim \textrm{Bernoulli}(\rho_i).
\end{equation}
Since nodes have their own independent stochastic gradients and the objective function (or dataset) is shared, then master node receives i.i.d. trials $S_i^n$, which sum up to a binomial random variable $S_i$:
\begin{equation}\label{def:Si}
S_i := \sum_{n=1}^M S_i^n \sim \textrm{Binomial}(M, \rho_i).
\end{equation}
First, let us consider the case when there are odd number of nodes, i.e. $M=2l-1,\,l\ge 1$. In this case, taking into account (\ref{def:Sim}) and (\ref{def:Si}), we have
\begin{align*}
                &\,\Pr\left(\sign\hat{g}^{(M)}_i = 0\right) = 0, \\
\rho_i^{(M)}    := &\,\Pr\left(\sign\hat{g}^{(M)}_i = \sign g_i\right) = \Pr(S_i \ge l), \\
1 - \rho_i^{(M)} = &\,\Pr\left(\sign\hat{g}^{(M)}_i = -\sign g_i\right).
\end{align*}

It is well known that cumulative distribution function of binomial random variable can be expressed with regularized incomplete beta function:
\begin{equation}\label{binom-cdf}
\Pr(S_i \ge l) = I(\rho_i; l, M-l+1) = I(\rho_i; l, l).
\end{equation}

Therefore,
\begin{align*}
\E\left[\sign\left(\hat{g}^{(M)}_i\cdot g_i\right)\right]
&=   \rho_i^{(M)}\cdot 1 + (1-\rho_i^{(M)})\cdot(-1) \\
&=   2\rho_i^{(M)} - 1 \\
&=   2 \Pr(S_i \ge l) - 1 \\
&=   2 I(\rho_i; l, l) - 1.
\end{align*}

In the case of even number of nodes, i.e. $M=2l,\,l\ge 1$, there is a probability to fail the vote $\Pr\left(\sign\hat{g}^{(M)}_i = 0\right) > 0$. However using (\ref{binom-cdf}) and properties of beta function\footnote{see \url{https://en.wikipedia.org/wiki/Beta_function\#Incomplete_beta_function}} gives
\begin{align*}
\E\left[\sign\left(\hat{g}^{(2l)}_i\cdot g_i\right)\right]
&= \Pr(S_i \ge l+1) \cdot 1 + \Pr(S_i \le l-1) \cdot (-1) \\
&= I(\rho_i;l+1,l) + I(\rho_i;l,l+1) - 1 \\
&= 2 I(\rho_i;l,l) - 1 \\
&= \E\left[\sign\left(\hat{g}^{(2l-1)}_i\cdot g_i\right)\right].
\end{align*}
This also shows that in expectation there is no difference between having $2l-1$ and $2l$ nodes.
\end{proof}

\subsection{Convergence analysis in parallel setting: Speedup with respect to $M$}\label{apx:exp-vr}

Here we present the proof of exponential noise reduction in parallel setting in terms of number of nodes. We first state the well-known Hoeffding's inequality:
\begin{theorem}[Hoeffding's inequality for general bounded random variables; see \citep{Versh}, Theorem 2.2.6]
Let $X_1, X_2, \dots, X_M$ be independent random variables. Assume that $X_n \in [A_n, B_n]$ for every $n$. Then, for any t > 0, we have
\begin{equation*}
\Pr\left(\sum_{n=1}^M \left(X_n - \E X_n \right) \ge t\right) \le \exp\left(-\frac{2t^2}{\sum_{n=1}^M (B_n-A_n)^2}\right).
\end{equation*}
\end{theorem}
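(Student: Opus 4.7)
The plan is to use the classical Chernoff (exponential Markov) method, reducing the tail bound to a product of moment generating functions and then applying Hoeffding's lemma to each factor. I first introduce a free parameter $s>0$ and write, by Markov's inequality,
$$\Pr\left(\sum_{m=1}^M (X_m - \E X_m) \ge t\right) = \Pr\left(e^{s\sum_{m=1}^M (X_m - \E X_m)} \ge e^{st}\right) \le e^{-st}\,\E\,e^{s\sum_{m=1}^M (X_m - \E X_m)}.$$
Independence of the $X_m$ factorizes the MGF: $\E\,e^{s\sum_m (X_m - \E X_m)} = \prod_{m=1}^M \E\,e^{s(X_m - \E X_m)}$, reducing the problem to controlling the MGF of a single centered bounded random variable.

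The heart of the argument, and the only nontrivial step, is Hoeffding's lemma: for any zero-mean $Y$ with $Y\in[a,b]$ almost surely,
$$\E\,e^{sY}\le \exp\!\left(\frac{s^2(b-a)^2}{8}\right).$$
I would prove this by convexity of $y\mapsto e^{sy}$, which gives $e^{sy}\le \frac{b-y}{b-a}e^{sa}+\frac{y-a}{b-a}e^{sb}$ for $y\in[a,b]$. Taking expectations and using $\E Y=0$, then setting $h = s(b-a)$ and $p = -a/(b-a)\in[0,1]$ (note $a\le 0\le b$ since $\E Y=0$), the logarithm of the resulting upper bound becomes $L(h) := -ph + \log(1-p+pe^h)$. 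A direct computation yields $L(0)=L'(0)=0$ and $L''(h) = q(h)(1-q(h))$ with $q(h) := pe^h/(1-p+pe^h)\in[0,1]$, hence $L''(h)\le 1/4$. Taylor's theorem with the integral remainder then gives $L(h)\le h^2/8$, which is exactly the claim.

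Applying the lemma to $Y_m := X_m - \E X_m$, which has mean zero and lies in an interval of length $B_m - A_m$, and multiplying across $m$, I obtain
$$\prod_{m=1}^M \E\,e^{s(X_m - \E X_m)} \le \exp\!\left(\frac{s^2}{8}\sum_{m=1}^M (B_m - A_m)^2\right).$$
Combining this with the Chernoff bound gives $\Pr(\cdot \ge t) \le \exp\!\left(-st + \frac{s^2}{8}\sum_m (B_m-A_m)^2\right)$. Finally, I optimize over $s>0$ by differentiating in $s$ and choosing $s^* = 4t/\sum_m (B_m - A_m)^2$, which produces the stated bound $\exp\!\left(-2t^2/\sum_m(B_m-A_m)^2\right)$. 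Everything after Hoeffding's lemma is routine; the main obstacle is establishing the $h^2/8$ estimate for $L$, which rests on the elementary inequality $q(1-q)\le 1/4$ applied inside Taylor's remainder.
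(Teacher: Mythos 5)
Your proof is correct: the Chernoff bound via Markov's inequality, the factorization of the moment generating function by independence, Hoeffding's lemma with the $L''(h)=q(1-q)\le\nicefrac{1}{4}$ estimate, and the optimization $s^*=4t/\sum_m(B_m-A_m)^2$ all check out and yield exactly the stated exponent. Note that the paper does not prove this statement at all --- it is imported as a known result from the cited reference (Vershynin, Theorem 2.2.6) --- and your argument is precisely the standard proof given there, so there is nothing to reconcile.
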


Define random variables $X_i^n,\,n=1, 2, \dots, M$ showing the missmatch between stochastic gradient sign and full gradient sign  from node $n$ and coordinate $i$:
\begin{equation}\label{def:Xim}
X^n_i :=
\begin{cases}
        -1, & \text{if } \sign\hat{g}^n_i = \sign g_i\\
        1, & \text{otherwise}
\end{cases}
\end{equation}

Clearly $\E X_i^n = 1-2\rho_i$ and Hoeffding's inequality gives
\begin{equation*}
\Pr\left(\sum_{n=1}^M X_i^n - M(1-2\rho_i) \ge t\right) \le \exp\left(-\frac{t^2}{2M}\right), \quad t>0.
\end{equation*}
Choosing $t = M(2\rho_i-1)>0$ (because of SPB assumption) yields
\begin{equation*}
\Pr\left(\sum_{n=1}^M X_i^n \ge 0\right) \le \exp\left(-\frac{1}{2}(2\rho_i-1)^2M\right).
\end{equation*}

Using Lemma \ref{master-sign-exp}, we get
\begin{equation*}
2 I(\rho_i,l;l) - 1 = \E\left[\sign\left(\hat{g}^{(M)}_i\cdot g_i\right)\right] = 1 - \Pr\left(\sum_{n=1}^M X_i^n \ge 0\right) \ge 1 - \exp\left(-(2\rho_i-1)^2 l\right),
\end{equation*}
which provides the following estimate for $\rho_M$-norm:
\begin{equation*}
\left(1-\exp\left(-(2\rho(x)-1)^2 l\right)\right)\|g(x)\|_1 \le \|g(x)\|_{\rho_M} \le \|g(x)\|_1,
\end{equation*}
where $\rho(x) = \min_{1\le i\le d}\rho_i(x) > \frac{1}{2}$.

\subsection{Distributed training in the homogeneous setting: Proof of Theorem \ref{thm:ssdm}}\label{apx:ssdm}

We follow the analysis of \citet{N-SGD-M}, who derived similar convergence rate for normalized SGD in single node setting. The novelty in our proof technique is i) extending the analysis in distributed setting and ii) establishing a connection between normalized SGD and sign-based methods via the new notion of {\em stochastic sign}.

\begin{lemma}[see Lemma 2 in \citep{N-SGD-M}]\label{lem:inner-prod-bound} For any non-zero vectors $a$ and $b$
$$-\frac{\langle a,b\rangle}{\|a\|} \le -\frac{1}{3}\|b\| + \frac{8}{3}\|a-b\|.$$
\end{lemma}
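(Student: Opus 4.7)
The stated inequality is equivalent (after multiplying by $-1$) to
$$
\frac{\langle a,b\rangle}{\|a\|} \ge \frac{\|b\|}{3} - \frac{8}{3}\|a-b\|,
$$
so I plan to establish this lower bound on the normalized inner product. The natural control parameter is the relative error $r := \|a-b\|/\|b\|$, and the argument splits into two regimes with threshold $r = 1/2$. In the ``large-error'' regime $r \ge 1/2$ the right-hand side is sufficiently negative that plain Cauchy--Schwarz suffices; in the ``small-error'' regime $r < 1/2$ the norm $\|a\|$ is controlled on both sides by triangle inequalities, and the claim reduces to a one-variable algebraic check.

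\textbf{Large-error case ($r \ge 1/2$).} Cauchy--Schwarz gives $\langle a,b\rangle \ge -\|a\|\|b\|$, hence $\langle a,b\rangle/\|a\| \ge -\|b\|$. On the other hand
$$
\frac{\|b\|}{3} - \frac{8}{3}\|a-b\| \;\le\; \frac{\|b\|}{3} - \frac{4\|b\|}{3} \;=\; -\|b\|,
$$
which immediately gives the desired inequality. Note the threshold $1/2$ is chosen precisely so that the linear correction $\frac{8}{3}\|a-b\|$ absorbs the $\frac{4}{3}\|b\|$ slack coming from Cauchy--Schwarz; this is what pins the universal constants $1/3$ and $8/3$ in the statement.

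\textbf{Small-error case ($r < 1/2$).} Here the reverse triangle inequality gives $\|a\| \ge \|b\| - \|a-b\| > \|b\|/2 > 0$, so all quantities are well-defined and $\langle a,b\rangle$ turns out to be positive. Decomposing $\langle a,b\rangle = \|b\|^2 + \langle a-b,b\rangle$ and applying Cauchy--Schwarz to the second term yields $\langle a,b\rangle \ge \|b\|(\|b\| - \|a-b\|)$. Dividing by the upper bound $\|a\| \le \|b\| + \|a-b\|$ gives
$$
\frac{\langle a,b\rangle}{\|a\|} \;\ge\; \|b\|\,\frac{1-r}{1+r}.
$$
It remains to verify $\frac{1-r}{1+r} \ge \frac{1 - 8r}{3}$ for $r \in [0,1/2)$. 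Both denominators are positive, so clearing them reduces the check to $3(1-r) \ge (1-8r)(1+r)$, i.e., $8r^2 + 4r + 2 \ge 0$, which holds for every real $r$.

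\textbf{Main obstacle.} There is no deep obstacle; the only delicate point is calibrating the case-split threshold so that Cauchy--Schwarz is tight enough in the large-error regime while the algebraic identity $8r^2 + 4r + 2 \ge 0$ covers the small-error regime. The constants in the statement are exactly what this calibration forces.
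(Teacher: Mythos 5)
Your proof is correct and follows essentially the same route as the paper: the same case split at $\|a-b\|=\tfrac{1}{2}\|b\|$, Cauchy--Schwarz in the large-error regime, and the decomposition $\langle a,b\rangle=\|b\|^2+\langle a-b,b\rangle$ with triangle-inequality control of $\|a\|$ in the small-error regime. The only (cosmetic) difference is that in the small-error case the paper discards the $\tfrac{8}{3}\|a-b\|$ term and bounds crudely by $-\tfrac{1}{3}\|b\|$, whereas you retain it and verify the slightly sharper inequality $\tfrac{1-r}{1+r}\ge\tfrac{1-8r}{3}$.
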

\begin{proof}
Denote $c=a-b$ and consider two cases. If $\|c\|\le \frac{1}{2}\|b\|$, then
$$
-\frac{\langle a,b\rangle}{\|a\|} = -\frac{\|b\|^2 + \langle c,b\rangle}{\|a\|} \le -\frac{\|b\|^2 - \|c\|\|b\|}{\|b+c\|} \le -\frac{\|b\|^2}{2\|b+c\|} \le -\frac{1}{3}\|b\| \le -\frac{1}{3}\|b\| + \frac{8}{3}\|a-b\|.
$$
Alternatively, if $\|c\| > \frac{1}{2}\|b\|$, then
$$
-\frac{\langle a,b\rangle}{\|a\|} \le \|b\| \le -\frac{1}{3}\|b\| + \frac{4}{3}\|b\| \le -\frac{1}{3}\|b\| + \frac{8}{3}\|c\|.
$$
\end{proof}

We start from the smoothness of function $f$:
\begin{align*}
f(x_{k+1})
&\le f(x_k) - \frac{\gamma}{M} \langle \nabla f(x_k), s_k\rangle + \frac{L\gamma^2}{2M^2}\|s_k\|^2 \\
&=   f(x_k) - \frac{\gamma}{M} \langle \nabla f(x_k), s_k\rangle + \frac{L\gamma^2}{2} \left\|\frac{1}{M}\sum_{n=1}^M s_k^n\right\|^2 \\
&\le f(x_k) - \frac{\gamma}{M} \langle \nabla f(x_k), s_k\rangle + \frac{L\gamma^2}{2} \frac{1}{M}\sum_{n=1}^M\left\|s_k^n\right\|^2 \\
&=   f(x_k) - \frac{\gamma}{M}\sum_{n=1}^M \langle \nabla f(x_k), s_k^n\rangle + \frac{d L\gamma^2}{2}.
\end{align*}

Denote $g_k = \nabla f(x_k)$. Taking expectation conditioned on previous iterate $x_k$ and current stochastic gradient $\hat{g}_k^n$, we get
\begin{align}
\begin{split}\label{bound-01}
\E\left[ f(x_{k+1}) | x_k, \hat{g}_k^n \right]
&\le f(x_k) - \frac{\gamma}{M}\sum_{n=1}^M \frac{\langle g_k, m_k^n\rangle}{\|m_k^n\|} + \frac{d L\gamma^2}{2} \\
&\overset{\textrm{Lemma}\;\ref{lem:inner-prod-bound}}{\le} f(x_k) - \frac{\gamma}{3}\|g_k\| + \frac{8\gamma}{3M}\sum_{n=1}^M \|m_k^n-g_k\| + \frac{d L\gamma^2}{2}.
\end{split}
\end{align}

Next, we find recurrence relation for the error terms $\hat{\epsilon}_k^n \eqdef m_k^n-g_k$. Denote $\epsilon_k^n \eqdef \hat{g}_k^n-g_k$, and notice that
\begin{align*}
\hat{\epsilon}_{k+1}^n
&= \beta m_k^n + (1-\beta)\hat{g}_{k+1}^n - g_{k+1} \\
&= \beta (m_k^n - g_{k+1}) + (1-\beta)(\hat{g}_{k+1}^n - g_{k+1}) \\
&= \beta(m_k^n-g_k) + \beta(g_k - g_{k+1}) + (1-\beta)\epsilon_{k+1}^n \\
&= \beta\hat{\epsilon}_k^n + \beta(g_k - g_{k+1}) + (1-\beta)\epsilon_{k+1}^n.
\end{align*}

Unrolling this recursion and noting that $\hat{\epsilon}_0^n = \epsilon_0^n$ (due to initial moment $m_{-1}^n = \hat{g}_0^n$), we get
$$
\hat{\epsilon}_{k+1}^n = \beta^{k+1}\epsilon_0^n + \beta\sum_{t=0}^k \beta^t(g_{k-t}-g_{k+1-t}) + (1-\beta)\sum_{t=0}^k \beta^t\epsilon_{k+1-t}^n
$$

From Assumption \ref{asum-BV}, we have
\begin{equation}\label{error-ctr}
\E\left[\langle \epsilon_k^n, \epsilon_{k'}^n\rangle\right]
\begin{cases}
\le(\sigma^n)^2 & \text{ if } k=k', \\
=0 & \text{ if } k\ne k'.
\end{cases}
\end{equation}

Using $L$-smoothness of $f$, we have
\begin{equation}\label{L_n-smooth-bound}
\|g_k - g_{k+1}\| \le L\|x_k - x_{k+1}\| = \frac{L\gamma}{M}\|s_k\| \le L\gamma\sqrt{d}.
\end{equation}

Therefore
\begin{align*}
\E\|\hat{\epsilon}_{k+1}^n\|
&\le \beta^{k+1}\E\|\epsilon_0^n\| + \sum_{t=0}^k \beta^{t+1}\|g_{k-t}-g_{k+1-t}\| + (1-\beta)\E\left\|\sum_{t=0}^k \beta^t\epsilon_{k+1-t}^n\right\| \\
&\overset{(\ref{L_n-smooth-bound})}{\le} \beta^{k+1}\sigma^n + \frac{L\gamma\sqrt{d}}{1-\beta} + (1-\beta)\sqrt{\E\left\|\sum_{t=0}^k \beta^t\epsilon_{k+1-t}^n\right\|^2} \\
&\overset{(\ref{error-ctr})}{\le} \beta^{k+1}\sigma^n + \frac{L\gamma\sqrt{d}}{1-\beta} + (1-\beta)\sqrt{\sum_{t=0}^k \beta^{2t}(\sigma^n)^2} \\
&\le \beta^{k+1}\sigma^n + \frac{L\gamma\sqrt{d}}{1-\beta} + \sigma^n\sqrt{1-\beta} \\
\end{align*}
Averaging this bound over the nodes yields
$$
\frac{1}{M}\sum_{n=1}^M\E\|\hat{\epsilon}_k^n\| \le \beta^k\tilde{\sigma} + \frac{L\gamma\sqrt{d}}{1-\beta} + \tilde{\sigma}\sqrt{1-\beta}.
$$
Then averaging over the iterates gives
$$
\frac{1}{KM}\sum_{k=0}^{K-1}\sum_{n=1}^M\E\|\hat{\epsilon}_k^n\| \le \frac{\tilde{\sigma}}{(1-\beta)K} + \frac{L\gamma\sqrt{d}}{1-\beta} + \tilde{\sigma}\sqrt{1-\beta}.
$$

Taking full expectation in (\ref{bound-01}), we have
\begin{align*}
\frac{1}{K}\sum_{k=0}^{K-1}\E\|g_k\|
&\le \frac{3}{\gamma K} \sum_{k=0}^{K-1}\E\left[f(x_k)-f(x_{k+1})\right] + \frac{8}{MK}\sum_{n=1}^{M}\sum_{k=0}^{K-1}\E\|\hat{\epsilon}_k^n\| + \frac{3}{2}Ld\gamma \\
&\le \frac{3(f(x_0)-f_*)}{\gamma K} + \frac{8\tilde{\sigma}}{(1-\beta)K} + \frac{8L\gamma\sqrt{d}}{1-\beta} + 8\tilde{\sigma}\sqrt{1-\beta} + \frac{3}{2}Ld\gamma.
\end{align*}

Now it remains to choose parameters $\gamma$ and $\beta$ properly. Setting $\beta = 1-\frac{1}{\sqrt{K}}$ and $\gamma = \frac{1}{K^{\nicefrac{3}{4}}}$, we get
\begin{align*}
\frac{1}{K}\sum_{k=0}^{K-1}\E\|g_k\|
&\le \frac{3\delta_f}{\gamma K} + \frac{8\tilde{\sigma}}{(1-\beta)K} + \frac{8L\gamma\sqrt{d}}{1-\beta} + 8\tilde{\sigma}\sqrt{1-\beta} + 3Ld\gamma \\
&\le \frac{1}{K^{\nicefrac{1}{4}}}\left[ 3\delta_f + 16\tilde{\sigma} + 8L\sqrt{d} + \frac{3Ld}{\sqrt{K}} \right].
\end{align*}

\section{Recovering Theorem 1 in \citep{BZAA} from Theorem~\ref{non-convex-theorem}} \label{sec:recovering}

To recover Theorem 1 in \citep{BZAA}, first note that choosing a particular step size $\gamma$ in (\ref{const-step-rate}) yields
\begin{equation}\label{const-step-rate-K}
\frac{1}{K}\sum_{k=0}^{K-1}\E\|g_k\|_\rho \le \sqrt{\frac{2 d\bar{L} (f(x_0)-f^*)}{K}}, \quad\text{with}\quad \gamma = \sqrt{\frac{2(f(x_0)-f^*)}{d\bar{L}K}}.
\end{equation}
Then, due to Lemma \ref{lemma:main-rho}, under unbiasedness and unimodal symmetric noise assumption, we can lower bound general $\rho$-norm by mixed $l^{1,2}$ norm. Finally we further lower bound our $l^{1,2}$ norm to obtain \emph{the mixed norm} used in Theorem 1 of \citep{BZAA}:
\begin{align*}
5\sqrt{\frac{d\bar{L} (f(x_0)-f^*)}{K}} 
&\ge \frac{5}{\sqrt{2}} \frac{1}{K}\sum_{k=0}^{K-1}\E\|g_k\|_\rho \\
&\ge \frac{5}{\sqrt{2}} \frac{1}{K}\sum_{k=0}^{K-1}\E\|g_k\|_{l^{1,2}}
=    \frac{5}{\sqrt{2}} \frac{1}{K}\sum_{k=0}^{K-1}  \left[\sum_{i=1}^d \frac{g_i^2}{|g_i| + \sqrt{3}\sigma_i}\right] \\
&\ge \frac{5}{\sqrt{2}} \frac{1}{K}\sum_{k=0}^{K-1}\E\left[\frac{2}{5}\sum_{i\in H_k}|g_{k,i}| + \frac{\sqrt{3}}{5}\sum_{i\notin H_k}\frac{g_{k,i}^2}{\sigma_i}\right] \\
&\ge \frac{1}{K}\sum_{k=0}^{K-1}\E\left[\sum_{i\in H_k}|g_{k,i}| + \sum_{i\notin H_k}\frac{g_{k,i}^2}{\sigma_i}\right],
\end{align*}
where $H_k = \{1\le i\le d \colon \sigma_i<\frac{\sqrt{3}}{2}|g_{k,i}|\}$.


\end{document}